\pgfplotsset{compat=1.14}
\setlist[description]{font=\normalfont\bfseries\itshape,leftmargin=*}
\newtheorem{theorem}{Theorem}
\newtheorem{lemma}[theorem]{Lemma}
\newtheorem{proposition}[theorem]{Proposition}
\newtheorem{corollary}[theorem]{Corollary}
\theoremstyle{definition}
\newtheorem*{definition}{Definition}
\DeclareMathOperator{\pbinom}{Bin}
\DeclareMathOperator{\pbeta}{Beta}
\DeclareMathOperator{\pgamma}{Gamma}
\DeclareMathOperator{\sign}{Sign}
\DeclareMathOperator{\betafun}{B}
\DeclareMathOperator{\mode}{mode}
\DeclareMathOperator{\antimode}{anti-mode}
\newcommand{\R}{\mathbb{R}}
\newcommand{\N}{\mathbb{N}}
\newcommand{\stleq}{\mathrel{\leq_{\mathrm{st}}}}
\newcommand{\cleq}{\mathrel{\leq_c}}
\newcommand{\IFRAleq}{\mathrel{\leq_\ast}}
\DeclareSymbolFont{ttfont}{OT1}{cmtt}{m}{n}
\DeclareMathSymbol{\splus}{\mathord}{ttfont}{`+}
\DeclareMathSymbol{\sminus}{\mathord}{ttfont}{`-}
\DeclareMathSymbol{\snull}{\mathord}{ttfont}{`0}
\DeclareMathOperator{\srev}{rev}
\title{Convex transform order of Beta distributions with some consequences\thanks{This work was partially supported by the Centre for Mathematics of the University of Coimbra - UIDB/00324/2020, funded by the Portuguese Government through FCT/MCTES.}}
\author[1]{Idir Arab}
\author[1]{Paulo Eduardo Oliveira}
\author[2]{Tilo Wiklund}
\affil[1]{CMUC, Department of Mathematics, University of Coimbra, Coimbra, 3001-501, Portugal}
\affil[2]{Department of Mathematics, Uppsala University, Uppsala, 751 06, Sweden}
\newcommand{\correction}[2]{#2}
\date{\vspace{-\baselineskip}}
\begin{document}

\maketitle

\begin{abstract}
The convex transform order is one way to make precise comparison between the
  skewness of probability distributions on the real line. We establish a simple
  and complete characterisation of when one Beta distribution is smaller than
  another according to the convex transform order. As an application, we derive
  monotonicity properties for the probability of Beta distributed \correction{}{random} variables
  exceeding the mean or mode of their distribution. Moreover\correction{}{,} we
  obtain a simple alternative proof of the mode-median-mean inequality for
  unimodal distributions that are skewed in a sense made precise by the convex
  transform order. This new proof also gives an analogous inequality for the
  anti-mode of distributions that have a unique anti-mode. Such inequalities for
  Beta distributions follow as special cases. Finally, some consequences for the
  values of distribution functions of Binomial distributions near to their means
  are mentioned.
\end{abstract}

\section{Introduction}
\label{sec:intro}

How to order probability distributions according to criteria that have
consequences with probabilistic interpretations is a common question in probability
theory. Naturally, there will exist many  order relations, each one
highlighting a particular aspect of the distributions. Classical examples are
given by orderings that capture size and dispersion. In reliability theory, some
ordering criteria are of interest when dealing with ageing problems. These help
decide, for example, which lifetime distributions exhibit faster ageing. An
account of different orderings, their properties, and basic relationships may be
found in the monographs of % Marshal~and~Olkin~
\textcite{MO07} or
% Shaked~and~Shanthikumar~
\textcite{SS07}.

In this paper we shall be interested primarily in two such orderings. In the literature they are known as the convex transform order and the star-shape transform order.
These orders are defined by the convexity or star-shapedness of a certain
mapping that transforms one distribution into another. The convex transform
order was introduced by % van~Zwet~
\textcite{VZ-64} with the aim of comparing skewness properties of
distributions. % Oja~
\textcite{Oja81} suggests that any measure of skewness should be compatible with the
convex transform order, and that many such measures indeed are. Hence, this
ordering gives a convenient formalisation of what it means to compare
distributions according to skewness.

With respect to the ageing interpretation, the convex transform order may be
seen as identifying ageing rates in a way that works also when lifetimes did not
start simultaneously. In this context, the star-shape order requires the same starting
point for the distributions under comparison, as described in
% Nanda~et~al.~
\textcite{ASOK}.

Establishing that one distribution is smaller than another is often difficult
and tends to rely on being able to control the number of crossing points between suitable transformations of distribution functions. This led \textcite{Bel-et-al-2013} to propose a criterion for deciding about the star-shape order between two distributions with the same support based on a quotient of suitably scaled densities. This same quotient was used in \textcite{AHO20a} to derive some negative results about the comparability with respect to the convex ordering. Using a different approach, depending on the number of modes of an appropriate transformation of the inverse distribution functions, \textcite{Arriaza-et-al-2019} gave sufficient conditions for the convex ordering. By relating it to an idea of \textcite{Parzen1979}, \textcite{Alzaid1989} describe a connection of the ordering to, roughly speaking, the tail behaviours of the two distributions. More recently, based on the analysis of the sign variation of affine transformations of distribution functions, % Arab~and~Oliveira~
\textcite{AO19,AO18} and
% Arab~et~al.~
\textcite{AHO20} proved explicit ordering relationships within the Gamma and Weibull families.

The family of Beta distributions is a two-parameter family of distributions
supported on the unit interval. It appears , for example, in
the study of order statistics and in Bayesian statistics as a conjugate prior
for a variety of distributions arising from Bernoulli trials.

The main contribution of this paper is to characterise when one Beta
distribution is smaller than another according to the convex- and star-shaped-transform orders. This characterisation implies various monotonicity properties
for the probabilities of Beta distributed random variables exceeding the mean or
mode of their distribution. Using this allows one to derive, in some cases,
simple bounds for such probabilities. These bounds differ from concentration
inequalities such as Markov's inequality or Hoeffding's inequality in that they
control the probability of exceeding, without necessarily significantly
\emph{deviating} from, the mean or the mode.

A well-known connection between Beta and the Binomial distributions allows us to
translate these results into similar monotonicity properties for the family of
Binomial distributions. The question of controlling the probability of a binomially distributed quantity exceeding its mean has received
attention in the context of studying properties of randomised algorithms, see
for example % Karppa~et~al.~
\textcite{Karppa}, % Becchetti~et~al.~
\textcite{Bicchetti}, or % Mitzenmacher and Morgan~
\textcite{Mitzenmacher}. The question also appears when dealing with specific
aspects in machine learning problems, such as in % Doerr~
\textcite{doerr18}, % Greenberg and Mohri~
\textcite{Greenberg}, with sequels in % Pelekis~
\textcite{Pelikis:lower} and % Ramon~
\textcite{Pelikis:tail}. See % Cortes~et~al.~
\textcite{Cortes} for more general questions. Such an inequality for the Binomial
random variables was also used by % Wiklund~
\textcite{Wik18} when studying the amount of information lost when resampling.

These properties also allow one to compare the relative location of the mode,
median, and mean of certain distributions that are skewed in a sense made
precise by the convex transform order. Such mode-median-mean inequalities are a
classical subject in probability theory. While our condition for these
inequalities to hold has previously been suggested by % van~Zwet~
\textcite{VZ-79}, our proof appears novel. The proof also allows us to establish a
similar inequality for absolutely continuous distributions with unique
anti-modes, meaning distributions that have densities with a unique minimizer. For an
account of the field we refer the interested reader to % van~Zwet~
\textcite{VZ-79} or, for more recent references, to % Abadir~
\textcite{Aba05} or
% Zheng~et~al.~
\textcite{Zheng17}.

This paper is organised as follows. Section~\ref{sec:prel} contains a review of
important concepts and definitions. The main results, characterising the order
relationships within the Beta family, are presented in Section~\ref{sec:main}.
Consequences are discussed in Section~\ref{sec:applications}, while proofs of
the main results are presented in Section~\ref{sec:proofs}. Some auxiliary
results concerning the main tools of analysis are given later in
Appendix~\ref{sec:overview}.

\section{Preliminaries}
\label{sec:prel}

In this section we present the basic notions necessary for understanding the
main contributions of the paper.

Let us first recall the classical notion of convexity on the real numbers.
\begin{definition}[Convexity]
  A real valued function \(f \colon I \mapsto \R\) on an interval \(I\) is said
  to be \emph{convex} if for every \(x, y \in I\) and \(\alpha \in [0, 1]\) we
  have \(f(\alpha x + (1-\alpha) y) \leq \alpha f(x) + (1-\alpha) f(y)\).
\end{definition}
We will also need the somewhat less well-known notion of star-shapedness of a
function on the real numbers.
\begin{definition}[Star-shapedness]
  A function \(f \colon [0,a] \to \R\), for some \(a \in (0, \infty]\), is said to be
  \emph{star-shaped} if for every \(0 \leq \alpha \leq 1\), we have \(f(\alpha
  x) \leq \alpha f(x)\).
\end{definition}
Star-shapedness can be defined on general intervals with respect to
an arbitrary reference point. For our purposes it suffices to consider functions
on the non-negative half-line that are star-shaped with the origin as reference point.

A convex \(f \colon I \to \R\) on an initial segment of the
non-negative half-line that satisfies \(f(0) \leq 0\) is star-shaped. Moreover,
\(f\) is star-shaped if and only if \(f(x)/x\) is increasing in \(x \in I\). We
refer the reader to % Barlow~et~al.~
\textcite{BMP69} for some more general properties and relations between these types
of functions.

Our main concern in this paper is to establish certain orderings of the family
of Beta distributions that are defined in \([0,1]\).
\begin{definition}[Beta distribution]
  The Beta distribution \(\pbeta(a, b)\) with parameters \(a, b > 0\) is a
  distribution supported on the unit interval and defined by the density given
  for \(x \in (0, 1)\) by
  \begin{equation}\label{eq:dbetas}
    \frac{x^{a-1}(1-x)^{b-1}}{\betafun(a, b)}
    \quad
    \text{where}
    \quad
    \betafun(a, b) = \int_{0}^{1}y^{a-1}(1-y)^{b-1}\,dy.
  \end{equation}
\end{definition}

We will consider two orderings determined by the
convexity or star-shapedness of a certain mapping. Of primary interest is the following order
due to % van~Zwet~
\textcite{VZ-64}. In order to avoid working with generalised inverses, we restrict
ourselves to distributions supported on intervals.
\begin{definition}[Convex Transform Order \(\boldsymbol\cleq\)]\label{def:corder}
  Let \(P\) and \(Q\) be two probability distributions on the real line
  supported by the intervals \(I\) and \(J\) that have strictly increasing
  distribution functions \(F \colon I \to [0, 1]\) and \(G \colon J \to [0,
  1]\), respectively. We say that \(P \cleq Q\) or, equivalently, \(F \cleq G\),
  if the mapping \(x \mapsto G^{-1}(F(x))\) is convex. Moreover, if \(X \sim P\)
  and \(Y \sim Q\), we will also write \(X \cleq Y\) when \(P \cleq Q\).
\end{definition}

If \(X \sim P\) and \(Y \sim Q\) then
both \(X \cleq Y\) and \(Y \cleq X\) if and only if there exist some \(a > 0\)
and \(b \in \R\) such that \(X\) has the same distribution as \(aY + b\). In
other words, the convex transform order is invariant under orientation-preserving affine transforms.

Although it is popular in reliability theory, the convex
transform order was first introduced by % van~Zwet~
\textcite{VZ-64} to compare the shape of distributions with respect to skewness
properties. The idea is roughly as follows. Let \(X\) and \(Y\) be random
variables having, say, absolutely continuous distributions given by distribution
functions \(F\) and \(G\), respectively. Then \(G^{-1}(F(X))\) has the same law
as \(Y\). Convexity of \(x \mapsto G^{-1}(F(x))\) implies that the transformed
distribution tends to be spread out in the right tail while being compressed in
the left tail. In other words, \(Y\) will have a distribution more skewed to the
right. Indeed, if \(\psi\) is an increasing function then \(X \cleq \psi(X)\) if
and only if \(\psi\) is convex.

In the reliability literature the convex transform ordering is known as the
\emph{increasing failure rate} (\textsc{ifr}) order. Indeed, assuming that \(F\)
and \(G\) are absolutely continuous distribution functions with derivatives
\(f\) and \(g\) and failure rates \(r_{F} = f/(1-F)\) and \(r_{G} = g/(1-G)\)
then \(F \cleq G\) is equivalent to
\begin{equation*}
  \frac{f(F^{-1}(u))}{g(G^{-1}(u))} = \frac{r_{F}(F^{-1}(u))}{r_{G}(G^{-1}(u))}
\end{equation*}
being increasing in \(u \in [0, 1]\).

The second order of interest is defined analogously to the convex transform
order, but now with respect to star-shapedness.

\begin{definition}[Star-shaped order \(\boldsymbol\IFRAleq\)]\label{def:starorder}
  Let \(P\) and \(Q\) be two probability distributions on the real line
  supported by the intervals \(I = [0, a]\) and \(J = [0, b]\), for some \(a, b
  > 0\), and which have strictly increasing distributions functions \(F \colon I \to [0,
  1]\) and \(G \colon J \to [0, 1]\), respectively. We say that \(P \IFRAleq Q\)
  or, equivalently, \(F \IFRAleq G\), if the mapping \(x \mapsto G^{-1}(F(x))\)
  is star-shaped. Moreover, if \(X \sim P\) and \(Y \sim Q\), we will also write
  \(X \IFRAleq Y\) when \(P \IFRAleq Q\).
\end{definition}
If \(X \sim P\) and \(Y \sim Q\) for appropriate \(P\) and \(Q\) then \(X
\IFRAleq Y\) and \(Y \IFRAleq X\) if and only if there exists an \(a > 0\) such
that \(X\) has the same distribution as \(aY\).

The star transform order can be interpreted in terms of the average failure
rate. It is therefore sometimes known as the \emph{increasing failure rate on
  average} (\textsc{ifra}) order. In fact, \(F \IFRAleq G\) is equivalent to
\(G^{-1}(u)/F^{-1}(u)\) being increasing in \(u \in [0, 1]\). Moreover,
\begin{equation*}
  \frac{G^{-1}(x)}{F^{-1}(x)} = \frac{\overline{r}_{F}(F^{-1}(u))}{\overline{r}_{G}(G^{-1}(u))},
\end{equation*}
where \(\overline{r}_{F}(x)\) and \(\overline{r}_{G}(x)\) are known as the
failure rates on average of \(F\) and \(G\), respectively, and are defined by
\(\overline{r}_{F}(x) = -\ln(1-F(x))/x\) and \(\overline{r}_{G}(x) =
-\ln(1-G(x))/x\).

The star-shaped order is strictly weaker than the convex transform order
for distributions having support with a lower end-point at \(0\), such as the
Beta distributions. That being said, it is of some independent interest as well as being useful as an intermediate order when establishing ordering according to the convex transform order.

The stochastic dominance order is also known as first stochastic dominance
(\textsc{fsd}) in reliability theory, and captures the notion of one
distribution attaining larger values than the other. It is generally easier to
verify than the convex transform order or star-shaped order and will serve here
primarily to establish necessity of the sufficient conditions for convex
transform ordering between two Beta distributions.
\begin{definition}[Stochastic dominance \(\stleq\)]
  Let \(P\) and \(Q\) be two probability distributions on the real line with
  distributions functions \(F \colon \R \to [0, 1]\) and \(G \colon \R \to [0, 1]\),
  respectively. We say that \(P \stleq Q\) or, equivalently, \(F \stleq G\), if
  \(F(x) \geq G(x)\), for all \(x \in \R\). Moreover, if \(X \sim P\) and \(Y \sim Q\),
  we will also write \(X \stleq Y\) when \(P \stleq Q\).
\end{definition}

\section{Main results}
\label{sec:main}

The main results of this paper describe the stochastic dominance-, star-shape
transform-, and convex transform-order relationships within the family of
Beta distributions. The proofs are postponed until Section~\ref{sec:proofs}.

The following stochastic dominance order relationships within the family of Beta distributions
are known, and can be found in, for example, the appendix of \textcite{Lisek1978}. 
\begin{theorem}\label{thm:betast}
  Let \(X \sim \pbeta(a, b)\) and \(Y \sim \pbeta(a', b')\), then \(Y \stleq X\)
  if and only if \(a \geq a'\) and \(b \leq b'\).
\end{theorem}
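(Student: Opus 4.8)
The plan is to establish the two implications separately. For the ``if'' direction I would deduce stochastic dominance from the (stronger) likelihood ratio order, and for the ``only if'' direction I would argue by contraposition, using the behaviour of the Beta distribution functions near the two endpoints of \([0,1]\).

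For sufficiency, assume \(a \ge a'\) and \(b \le b'\). Writing \(f\) and \(g\) for the densities of \(X \sim \pbeta(a,b)\) and \(Y \sim \pbeta(a',b')\) given by \eqref{eq:dbetas}, one has
\[
  \frac{f(x)}{g(x)} = \frac{\betafun(a', b')}{\betafun(a, b)}\, x^{\,a-a'}\,(1-x)^{\,b-b'}, \qquad x \in (0,1).
\]
Since \(a-a' \ge 0\) the factor \(x^{a-a'}\) is nondecreasing on \((0,1)\), and since \(b-b' \le 0\) the factor \((1-x)^{b-b'}\) is nondecreasing there as well; hence their product is nondecreasing. Thus \(Y\) precedes \(X\) in the likelihood ratio order, which is well known to imply \(Y \stleq X\).

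For necessity, I would show the contrapositive: if it is not true that \(a \ge a'\) and \(b \le b'\), then either \(a < a'\) or \(b > b'\). If \(a < a'\), then from the asymptotics \(F_X(x) \sim x^{a}/(a\,\betafun(a,b))\) and \(F_Y(x) \sim x^{a'}/(a'\,\betafun(a',b'))\) as \(x \to 0^+\) one gets \(F_X(x) > F_Y(x)\) for all small enough \(x>0\), contradicting \(Y \stleq X\). If \(b > b'\), then from the corresponding asymptotics of the survival functions, \(1-F_X(x) \sim (1-x)^{b}/(b\,\betafun(a,b))\) and \(1-F_Y(x) \sim (1-x)^{b'}/(b'\,\betafun(a',b'))\) as \(x \to 1^-\), one gets \(F_X(x) > F_Y(x)\) for all \(x\) sufficiently close to \(1\), again contradicting \(Y \stleq X\). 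In either case \(Y \stleq X\) fails.

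The argument is largely routine; the only mildly delicate point is controlling the endpoint asymptotics, for which I would use the elementary estimate \(\int_0^x t^{c-1}(1-t)^{d-1}\,dt = x^c/c + O(x^{c+1})\) as \(x \to 0^+\) together with its mirror image at \(x = 1\). These show that near an endpoint the distribution function with the smaller exponent on \(x\) (respectively on \(1-x\)) is the larger one, which is precisely what drives the necessity argument.
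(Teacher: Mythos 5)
Your proof is correct, and it takes a genuinely different route from the one in the paper. For sufficiency you pass through the likelihood ratio order --- observing that \(f/g\) is monotone on \((0,1)\) whenever \(a \geq a'\) and \(b \leq b'\) --- and invoke the standard implication that the likelihood ratio order implies stochastic dominance; for necessity you compare the distribution functions near the two endpoints via elementary asymptotics of the incomplete Beta integral. The paper instead works directly with the sign pattern of \(H = F - G\): the sign of \(H'\) is governed by where \(x^{a-a'}(1-x)^{b-b'}\) crosses the constant \(\betafun(a,b)/\betafun(a',b')\), which yields \(S(H') = \sminus\splus\) under the parameter hypotheses (forcing \(S(H) = \sminus\) because \(H\) vanishes at both endpoints) and \(S(H') = \sminus\splus\sminus\) when \(a > a'\) and \(b > b'\) (forcing \(H\) to change sign); the remaining parameter configurations are reduced to these two by the symmetry \(X \mapsto 1-X\) and the fact that \(\stleq\) is a partial order. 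Your argument is more elementary and self-contained, and is essentially the ``elementary means'' the paper alludes to before opting for the sign-pattern formulation, which it uses here mainly as a warm-up for the harder proofs that follow. Two small remarks: your two necessity cases \(a < a'\) and \(b > b'\) are not mutually exclusive, but together they do cover the negation of ``\(a \geq a'\) and \(b \leq b'\)'', so the contrapositive is complete; and the endpoint estimate is sound because \((1-t)^{d-1} = 1 + O(t)\) uniformly on \([0, 1/2]\), with the mirror-image bound near \(1\).
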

The star-shape ordering relationships within the family of Beta distributions
have been addressed previously by \textcite{JKP06} (see Example~4), but only
for the case of integer valued parameters that satisfy certain conditions. Here
we extend this to a complete classification.
\begin{theorem}\label{thm:betaifra}
  Let \(X \sim \pbeta(a, b)\) and \(Y \sim \pbeta(a', b')\), then \(X \IFRAleq
  Y\) if and only if \(a \geq a'\) and \(b \leq b'\).
\end{theorem}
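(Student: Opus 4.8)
The plan is to prove both directions separately. For necessity, I would observe that the star-shape order is strictly weaker than stochastic dominance fails here — rather, I would use that $X \IFRAleq Y$ implies $X \stleq Y$ (since a star-shaped function fixing the origin that maps the support of $F$ onto that of $G$ forces $G^{-1}\circ F(x) \geq x$ near $0$, hence $G \leq F$). Actually the cleanest route is: $F \IFRAleq G$ means $G^{-1}(F(x))/x$ is increasing; since this ratio tends to a finite positive limit as $x \to 0^+$ and since $G^{-1}(F(1^-)) = 1$ forces the supports to match up at the right endpoint, one deduces $G^{-1}(F(x)) \geq x$ for all $x$, i.e. $Y \stleq X$. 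Then Theorem~\ref{thm:betast} gives $a \geq a'$ and $b \leq b'$. I would need to handle the endpoint/support-normalisation carefully since Beta distributions all live on $[0,1]$, so the star-shape condition applies directly without rescaling.

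For sufficiency, assume $a \geq a'$ and $b \leq b'$. I would write $\varphi(x) = G^{-1}(F(x))$ and show $\varphi(x)/x$ is increasing on $(0,1)$, equivalently that $x\varphi'(x) - \varphi(x) \geq 0$, equivalently (since $\varphi(0)=0$) that $\log \varphi(x) - \log x$ is increasing, i.e. $\varphi'(x)/\varphi(x) \geq 1/x$. Here $\varphi'(x) = f(x)/g(\varphi(x))$ where $f, g$ are the Beta$(a,b)$ and Beta$(a',b')$ densities. So the goal becomes
\[
  \frac{x f(x)}{\varphi(x)\, g(\varphi(x))} \geq 1
  \quad\text{for all } x \in (0,1),
\]
or, writing $u = F(x)$ so that $\varphi(x) = G^{-1}(u)$ and $x = F^{-1}(u)$, that the function
\[
  u \longmapsto \frac{F^{-1}(u)\, f(F^{-1}(u))}{G^{-1}(u)\, g(G^{-1}(u))}
\]
is $\geq 1$ on $(0,1)$. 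The natural strategy is to reduce to two "one-step" comparisons: first compare $\pbeta(a,b)$ with $\pbeta(a',b)$ (only the first parameter changes, $a \geq a'$), then $\pbeta(a',b)$ with $\pbeta(a',b')$ (only the second changes, $b \leq b'$), and use transitivity of $\IFRAleq$.

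For each one-step comparison I would try to show the stronger statement $F \cleq G$ is \emph{false} in general but the IFRA relation still holds — more precisely, I expect that for a one-parameter move the relevant ratio $t g(t) / (t' f(t'))$ (with $t = G^{-1}(u)$, $t' = F^{-1}(u)$) can be analysed via its logarithmic derivative in $u$, which after the substitution reduces to comparing $\frac{t g'(t)}{g(t)} + 1$ against $\frac{t' f'(t')}{f(t')} + 1$ weighted by $g(t)/f(t')$; using the explicit form $x g'(x)/g(x) = (a'-1) - (b-1)x/(1-x)$ for a Beta density, this becomes a comparison of elementary rational functions. I would pair this with the known fact that the supports agree and the ratio has the right limit ($\to 1$) at one endpoint, so that a single sign change argument for the derivative (a "Descartes rule"/sign-variation count of an affine combination, in the spirit of the tools the paper develops in its appendix) pins down monotonicity and hence the inequality $\geq 1$ throughout.

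The main obstacle I anticipate is precisely this sign-variation step: showing that the relevant transformed density ratio crosses its limiting value at most once (with the correct orientation) so that one can conclude $\varphi(x)/x$ is monotone. This is where the explicit structure of the Beta densities — in particular that $\log f$ has a derivative of the form $(a-1)/x - (b-1)/(1-x)$, a difference of two monotone pieces — must be exploited, likely by clearing denominators to reduce the sign of $x\varphi'(x) - \varphi(x)$ (or of a derivative thereof) to the sign of a polynomial whose coefficients' sign pattern is controlled by $a \geq a'$, $b \leq b'$. I would also need the boundary analysis at $x \to 0^+$ and $x \to 1^-$ (computing $\lim \varphi(x)/x$ via $\varphi(x) \sim c x^{a/a'}$ type asymptotics and checking $\varphi(x)/x \to$ the right value at $1$) to fix the direction of the inequality, since the sign-change argument only gives monotonicity up to knowing one boundary value.
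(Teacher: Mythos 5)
Your necessity argument is, up to a sign slip, exactly the paper's: star-shapedness of \(\varphi = G^{-1}\circ F\) together with \(\varphi(1)=1\) gives \(\varphi(x)/x \leq \varphi(1)/1 = 1\), hence \(\varphi(x)\leq x\) (not \(\geq x\), as you write --- note also that your two displayed conclusions ``\(G\leq F\)'' and ``\(Y\stleq X\)'' contradict each other) and therefore \(F\leq G\), i.e.\ \(Y \stleq X\); Theorem~\ref{thm:betast} then forces \(a\geq a'\) and \(b\leq b'\). The limit of \(\varphi(x)/x\) at \(0^{+}\) plays no role (and equals \(0\), not a positive number, whenever \(a>a'\)), so that part of the justification should be dropped.

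The sufficiency direction has a genuine gap. The reduction to the one-parameter moves \(\pbeta(a,b)\IFRAleq\pbeta(a',b)\IFRAleq\pbeta(a',b')\) by transitivity matches the paper, but the analytic core --- the single-crossing/monotonicity step --- is exactly what you defer, and the formulation you choose makes it harder than necessary. Writing the condition as \(F^{-1}(u)f(F^{-1}(u)) \geq G^{-1}(u)g(G^{-1}(u))\) puts everything in the quantile variable \(u\), where both \(t=G^{-1}(u)\) and \(t'=F^{-1}(u)\) are implicit; the logarithmic derivative you propose to control is \(\bigl(1+\tfrac{tg'(t)}{g(t)}\bigr)\tfrac{1}{tg(t)} - \bigl(1+\tfrac{t'f'(t')}{f(t')}\bigr)\tfrac{1}{t'f(t')}\), whose weights \(1/(tg(t))\) and \(1/(t'f(t'))\) are themselves intractable functions of \(u\), so no clean sign-variation count of the kind you invoke is available there. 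The paper instead stays in the original variable: by Proposition~\ref{prop:ifrasign}, \(X\IFRAleq Y\) is equivalent to \(S(F(x)-G(cx))\leq\sminus\splus\) for every \(c\); the cases \(c\leq 0\) and \(c\geq 1\) are dispatched by monotonicity and the already-established stochastic dominance, and for \(c\in(0,1)\) the sign pattern of \(F(x)-G(cx)\) is bounded, after differentiating, taking logarithms, differentiating again and clearing denominators (Lemma~\ref{lemma:mainlemma}), by that of an explicit low-degree polynomial whose coefficient signs are controlled by \(a\geq a'\), \(b\leq b'\), with a case split on \(b\lessgtr 1\). To complete your plan you would need to switch to some such formulation in \(x\); as it stands the step you flag as the main obstacle is not merely unproved but not obviously provable by the route you describe. (Incidentally, your expectation that \(F\cleq G\) fails for these one-parameter moves is wrong: Theorem~\ref{thm:mainifr} shows the convex order holds under exactly the same parameter conditions; the star order is proved first because it is an ingredient in the convex-order proof.)
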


Two
Beta distributions turn out to be ordered according to the convex transform order if and
only if they are ordered according to the star-shaped order.
\begin{theorem}\label{thm:mainifr}
  Let \(X \sim \pbeta(a, b)\) and \(Y \sim \pbeta(a', b')\), then \(X \cleq Y\)
  if and only if \(a \geq a'\) and \(b \leq b'\).
\end{theorem}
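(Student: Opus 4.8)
The key structural observation is that the convex transform order is always at least as strong as the star-shaped order for distributions with left endpoint at the origin, since a convex function $\varphi$ on $[0,a]$ with $\varphi(0) \le 0$ is automatically star-shaped. All Beta distributions are supported on $[0,1]$, so $\varphi(x) = G^{-1}(F(x))$ satisfies $\varphi(0) = G^{-1}(0) = 0$. Hence $X \cleq Y \implies X \IFRAleq Y$ for any two Beta random variables, and by Theorem~\ref{thm:betaifra} the latter forces $a \ge a'$ and $b \le b'$. This disposes of the ``only if'' direction essentially for free. The entire content of the theorem is therefore the converse: \emph{assuming $a \ge a'$ and $b \le b'$, show that $G^{-1} \circ F$ is convex}, where $F$ is the $\pbeta(a,b)$ and $G$ the $\pbeta(a',b')$ distribution function.

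For the ``if'' direction, the plan is to reduce to one-parameter moves and then to a direct analytic argument. First, I would note that it suffices to handle the two elementary cases (i) $a \ge a'$, $b = b'$ and (ii) $a = a'$, $b \le b'$ separately and then compose: if $\pbeta(a,b) \cleq \pbeta(a',b)$ and $\pbeta(a',b) \cleq \pbeta(a',b')$, then composing the two convex increasing maps $G_1^{-1}\circ F$ and $G^{-1}\circ G_1$ gives a convex increasing map $G^{-1}\circ F$, so $\pbeta(a,b)\cleq\pbeta(a',b')$. Moreover, the reflection $x \mapsto 1-x$ sends $\pbeta(a,b)$ to $\pbeta(b,a)$ and reverses the roles of the two shape parameters while preserving convexity of the transform (composition with the decreasing affine map $x\mapsto 1-x$ on both sides), so case (ii) follows from case (i). Thus the crux is: \textbf{for $a \ge a'$ and fixed $b$, the map $G^{-1}\circ F$ with $F \sim \pbeta(a,b)$, $G \sim \pbeta(a',b)$ is convex.}

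To prove convexity of $\varphi = G^{-1}\circ F$ in this one-parameter case, the standard device is to show the derivative $\varphi'$ is nondecreasing, equivalently that
\begin{equation*}
  u \longmapsto \frac{f\bigl(F^{-1}(u)\bigr)}{g\bigl(G^{-1}(u)\bigr)}
\end{equation*}
is nondecreasing on $[0,1]$, where $f,g$ are the respective densities (this is exactly the \textsc{ifr}-order reformulation quoted in the excerpt). Writing $x = F^{-1}(u)$ and $y = G^{-1}(u)$, both increasing functions of $u$ with $x,y$ running over $(0,1)$, one must track how $y$ depends on $x$ through the constraint $F(x) = G(y)$. I expect the cleanest route is to differentiate $F(x) = G(y)$ to get $y' = f(x)/g(y)$, then examine the sign of the derivative of $\log\bigl(f(x)/g(y)\bigr)$ along this curve, which reduces to a comparison of $\frac{d}{dx}\log f(x)$ against $y'\frac{d}{dy}\log g(y)$; using the explicit log-derivatives $\frac{d}{dx}\log f(x) = \frac{a-1}{x} - \frac{b-1}{1-x}$ (and similarly with $a'$), this becomes an inequality that should be amenable to analysis, exploiting $a \ge a'$ and a monotone relationship between $x$ and $y$. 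An alternative, possibly more robust, approach is to use a crossing-point / sign-variation argument on affine combinations of the distribution functions in the spirit of the cited work of \textcite{AO19,AHO20}: show that $G(\alpha x + \beta) - F(x)$ changes sign at most once, or bound the number of solutions of $\varphi''=0$ by analysing $f'(x)g(y) - f(x)g'(y)y'$ via the variation-diminishing properties developed in the appendix. The main obstacle I anticipate is precisely this monotonicity verification: the densities are not ordered pointwise in any simple way, so one cannot argue termwise, and controlling the curve $y = y(x)$ — in particular establishing that $y/x$ or $(1-y)/(1-x)$ moves monotonically, which is what one needs to push the comparison of log-derivatives through — will require the careful real-analysis estimates that I would expect the authors to isolate as the lemmas in Section~\ref{sec:proofs} and Appendix~\ref{sec:overview}.
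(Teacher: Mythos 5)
Your skeleton coincides with the paper's: necessity via the implication from convex to star-shaped order (the paper routes it through stochastic dominance and Theorem~\ref{thm:betast}, but the content is the same), and sufficiency by composing the one-parameter move in \(a\) with the reflected move in \(b\) via \(x \mapsto 1-x\). That reduction is correct. The problem is that everything you have written after ``the crux is'' is a description of the difficulty rather than a proof of it, and the crux is essentially the entire theorem.

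Two concrete points. First, your fallback sign-variation criterion is misstated: for convexity of \(G^{-1}\circ F\) one needs \(S(F(x) - G(\ell(x))) \leq \splus\sminus\splus\) for \emph{every} affine \(\ell\) (two sign changes in a prescribed order), not that \(G(\alpha x+\beta)-F(x)\) changes sign at most once; the latter is the star-shape criterion and only applies to lines through the origin. Second, and more seriously, your primary route --- showing \(u \mapsto f(F^{-1}(u))/g(G^{-1}(u))\) is nondecreasing by differentiating along \(F(x)=G(y)\) --- runs straight into the implicit curve \(y=y(x)\), about which you have no usable information, and you explicitly defer this (``will require the careful real-analysis estimates''). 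The paper avoids the implicit curve entirely: it works with \(F(x)-G(\ell(x))\) directly, uses the already-proved star-shape ordering (Theorem~\ref{thm:betaifra}) together with Proposition~\ref{prop:ifratrick} to eliminate all affine \(\ell\) except increasing ones with \(\ell(0)<0\) and \(\ell(1)\in(0,1)\), and then differentiates twice and clears denominators to reduce to the sign pattern of an explicit cubic (Lemma~\ref{lemma:mainlemma}). Even then, the one-parameter case in \(a\) is \emph{not} uniform: for \(b\le 1\) the argument breaks at \(a=1\), forcing the intermediate step \(\pbeta(a,b)\cleq\pbeta(1,b)\cleq\pbeta(a',b)\) and, within Lemma~\ref{lemma:bfixsmallbothabigorsmall}, a further reduction to \(a-a'<1\) plus a subdivision of the domain at the critical point of \(x^{a-1}/\ell(x)^{a'-1}\). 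None of this case structure is visible from your plan, so as it stands the sufficiency direction is unproven.
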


\section {Some consequences of the main results}
\label{sec:applications}

A first simple result follows from the invariance of the convex ordering under
affine transformations. Recall that the family of Gamma distribution with
parameters \(\alpha,\theta>0\), denoted \(\pgamma(\alpha, \theta)\), is defined
by the density functions given for \(x > 0\) by
\begin{equation*}
  \frac{x^{\alpha-1}e^{-x/\theta}}{\theta^\alpha\Gamma(\alpha)}
  \quad\text{where}\quad
  \Gamma(\alpha) = \int_{0}^{\infty}y^{\alpha-1}e^{-y}\,dy.
\end{equation*}
Taking \(X_{b} \sim \pbeta(a, b)\) for some \(a > 0\) fixed and letting \(b\)
tend to \(+\infty\), the distributions of \(bX_{b}\) converges weakly to
\(\pgamma(a, 1)\). The following proposition is therefore an immediate
consequence of the transitivity of the transform orders and
Theorems~\ref{thm:betaifra}~and~\ref{thm:mainifr}.
\begin{proposition}\label{prop:gamma}
  Let \(X \sim \pbeta(a, b)\) and \(Y \sim \pgamma(a, \theta)\) for \(a, b,
  \theta > 0\), then \(X \IFRAleq Y\) and \(X \cleq Y\).
\end{proposition}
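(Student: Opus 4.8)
The plan is to exploit the affine invariance of both transform orders together with the weak convergence $bX_b \Rightarrow \pgamma(a,1)$ and a limiting argument. Fix $a,b,\theta > 0$. Since $\pgamma(a,\theta)$ is, up to the scale factor $\theta$, the same as $\pgamma(a,1)$, and since both $\IFRAleq$ and $\cleq$ are invariant under multiplication by a positive constant (as recalled after Definitions~\ref{def:corder}~and~\ref{def:starorder}), it suffices to show $X \IFRAleq Z$ and $X \cleq Z$ for $Z \sim \pgamma(a,1)$.

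First I would set up a sequence witnessing the Gamma as a limit of Betas: for $n \in \N$ let $X_n \sim \pbeta(a,n)$. By Theorems~\ref{thm:betaifra}~and~\ref{thm:mainifr}, for every $n \geq \lceil b \rceil$ we have $X \IFRAleq X_n$ and $X \cleq X_n$, since the parameter condition $a \geq a$ and $b \leq n$ holds. By affine invariance this is equivalent to $X \IFRAleq n X_n$ and $X \cleq n X_n$. Then I would invoke the stated fact that $n X_n$ converges weakly to $\pgamma(a,1)$. The remaining task is to pass the transform orderings to the limit: one needs that if $X \cleq Y_n$ for all $n$ and $Y_n \Rightarrow Y$ (with all distributions supported on intervals and having strictly increasing, continuous distribution functions), then $X \cleq Y$, and similarly for $\IFRAleq$.

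The main obstacle is precisely this closure-under-weak-convergence step, which is a standard but not entirely trivial fact. The cleanest route is to work with the quantile functions: weak convergence $Y_n \Rightarrow Y$ implies pointwise convergence $G_n^{-1}(u) \to G^{-1}(u)$ at every continuity point $u$ of $G^{-1}$, hence at all but countably many $u \in (0,1)$, and here $G^{-1}$ is continuous so convergence holds on all of $(0,1)$. For the convex order, $X \cleq Y_n$ says $x \mapsto G_n^{-1}(F(x))$ is convex on the support $I$ of $X$; a pointwise limit of convex functions is convex, so $x \mapsto G^{-1}(F(x))$ is convex, giving $X \cleq Y$. For the star-shaped order, $X \IFRAleq Y_n$ is equivalent to $u \mapsto G_n^{-1}(u)/F^{-1}(u)$ being increasing on $(0,1)$ (as noted in the preliminaries), and a pointwise limit of increasing functions is increasing, giving $X \IFRAleq Y$.

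A couple of routine checks remain: that all the distributions involved are supported on genuine intervals with strictly increasing continuous distribution functions (true for $\pbeta(a,b)$ and $\pgamma(a,\theta)$ for all positive parameters, so the quantile functions are genuine inverses), and that the convexity, respectively monotonicity, is inherited on the correct domain — for the convex order the functions $G_n^{-1}\circ F$ are all defined on the same interval $I = (0,1)$, so there is no domain issue. Finally I would restore the scale parameter: $Y \sim \pgamma(a,\theta)$ has the law of $\theta Z$ with $Z \sim \pgamma(a,1)$, so $X \IFRAleq Z$ and $X \cleq Z$ immediately upgrade to $X \IFRAleq Y$ and $X \cleq Y$ by affine invariance, completing the proof. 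Alternatively, one can avoid the limiting argument entirely by a direct computation showing $z \mapsto G^{-1}(F(z))$ is convex, where $F$ is the $\pbeta(a,b)$ and $G$ the $\pgamma(a,\theta)$ distribution function, but the limiting argument is shorter and reuses the main theorems as the proposition's statement suggests.
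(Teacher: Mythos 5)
Your proof follows the same route as the paper: the paper derives the proposition from the weak convergence of $bX_b$ to $\pgamma(a,1)$, affine invariance, and Theorems~\ref{thm:betaifra}~and~\ref{thm:mainifr}, treating the passage to the limit as immediate. Your argument is correct and merely makes explicit the closure of both orders under weak convergence (via pointwise convergence of quantile functions and the stability of convexity and monotonicity under pointwise limits), which the paper leaves implicit.
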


We considered the beta distribution defined with support \([0,1]\), therefore
the inverse of the distribution function defines another class of distributions
dubbed the complementary beta distributions, studied by \textcite{Jones2002}. The
convex transform order between two complementary beta distributions is then
expressed through the convexity of \(G(F^{-1}(x))\), where \(F\) and \(G\) are
beta distribution functions. This convexity is equivalent to the likelihood
ratio order (see for example chapter 1.C of \textcite{SS07}) between the beta
distributions. Hence the convex transform order between the complementary beta
distributions translates to the likelihood ratio order between beta
distributions and vice versa. Consequently, a characterisation of when two
complementary beta distributions are ordered according to the likelihood ratio
order follows immediately from Theorem~\ref{thm:mainifr}. We thank the anonymous
reviewer who pointed out this connection.

\subsection{Probabilities of exceedance}

It was noted already by % van~Zwet~
\textcite{VZ-64} that the probabilities of random variables being greater than (or
smaller than) their expected values is monotone with respect to convex transform
ordering of their distributions. As we will see, this is a consequence of
Jensen's inequality. The idea generalises directly to any functional that
satisfies a Jensen-type inequality.

\begin{theorem}\label{thm:genjensen}
  For any interval \(I\), measurable function \(h \colon
    I \to \R\) and \(X \sim P\) with \(P\) supported in \(I\) denote the
    distribution of \(h(X)\) by \(P_{h}\).

  Let \(\mathcal{F}\) be a set of continuous probability distributions
  on intervals in \(\R\) and \(T \colon \mathcal{F} \to
  \R\) a functional satisfying for all \(P \in \mathcal{F}\) and \(h\) convex
  and increasing with \(P_{h} \in \mathcal{F}\) that \(h(T(P)) \leq T(P_{h})\).

  Then if \(X \sim P\) and \(Y \sim Q\) with distributions \(P, Q \in
  \mathcal{F}\) such that \(X \cleq Y\) it holds that \(\mathbb{P}(X \geq T(P)) \geq
  \mathbb{P}(Y \geq T(Q))\).

  If \(T\) satisfies instead \(h(T(P)) \geq T(P_{h})\) then, under the same
  assumptions on \(X\) and \(Y\), the conclusion becomes \(\mathbb{P}(X \geq T(P))
  \leq \mathbb{P}(Y \geq T(Q))\).
\end{theorem}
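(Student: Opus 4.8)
The plan is to exploit the convexity of $\varphi = G^{-1} \circ F$ together with Jensen's inequality for the functional $T$. Since $X \cleq Y$, by Definition~\ref{def:corder} the map $\varphi(x) = G^{-1}(F(x))$ is convex, and it is increasing because $F$ and $G^{-1}$ both are. The key observation is that $\varphi(X)$ has distribution $Q$: indeed $F(X)$ is uniform on $[0,1]$ and applying $G^{-1}$ to a uniform variable yields a variable with distribution function $G$. In the notation of the theorem statement, this says $P_\varphi = Q$.

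First I would invoke the Jensen-type hypothesis on $T$ with $h = \varphi$. Since $\varphi$ is convex and increasing and $P_\varphi = Q \in \mathcal{F}$, the assumption $h(T(P)) \leq T(P_h)$ gives $\varphi(T(P)) \leq T(Q)$. Next I would translate the exceedance event through $\varphi$: because $\varphi$ is increasing (and, to be careful, we only need monotonicity in the sense that $\varphi(x) \geq \varphi(T(P))$ whenever $x \geq T(P)$), we have
\begin{equation*}
  \{x : x \geq T(P)\} \subseteq \{x : \varphi(x) \geq \varphi(T(P))\} \subseteq \{x : \varphi(x) \geq T(Q)\},
\end{equation*}
where the second inclusion uses $\varphi(T(P)) \leq T(Q)$. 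Therefore
\begin{equation*}
  \mathbb{P}(X \geq T(P)) \leq \mathbb{P}(\varphi(X) \geq T(Q)) = \mathbb{P}(Y \geq T(Q)),
\end{equation*}
which is the \emph{reverse} of the claimed inequality, so I must have the direction of something backwards — let me reconsider. The correct reading is that convexity should make $\varphi$ push probability mass to the right, so one expects $\varphi(T(P))$ to lie \emph{below} where mass accumulates. Rechecking: with $h$ convex we are told $h(T(P)) \leq T(P_h)$, i.e.\ $\varphi(T(P)) \leq T(Q)$. Then $\{x \geq T(P)\}$ maps under $\varphi$ into $\{\varphi(x) \geq \varphi(T(P))\} \supseteq \{\varphi(x) \geq T(Q)\}$, giving $\mathbb{P}(X \geq T(P)) = \mathbb{P}(\varphi(X) \geq \varphi(T(P))) \geq \mathbb{P}(\varphi(X) \geq T(Q)) = \mathbb{P}(Y \geq T(Q))$. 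The subtle point I botched above is that $\varphi$ being strictly increasing makes the first event equality rather than containment: $x \geq T(P) \iff \varphi(x) \geq \varphi(T(P))$, and only then does the one-sided bound $\varphi(T(P)) \leq T(Q)$ enlarge the target set. This gives exactly $\mathbb{P}(X \geq T(P)) \geq \mathbb{P}(Y \geq T(Q))$.

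The second statement follows by the symmetric argument: if instead $h(T(P)) \geq T(P_h)$ then $\varphi(T(P)) \geq T(Q)$, so $\{\varphi(x) \geq \varphi(T(P))\} \subseteq \{\varphi(x) \geq T(Q)\}$ and hence $\mathbb{P}(X \geq T(P)) = \mathbb{P}(\varphi(X) \geq \varphi(T(P))) \leq \mathbb{P}(\varphi(X) \geq T(Q)) = \mathbb{P}(Y \geq T(Q))$. The main thing to be careful about is the bookkeeping of inequality directions and the use of strict monotonicity of $\varphi$ (guaranteed since $F$, $G$ have strictly increasing distribution functions in Definition~\ref{def:corder}) to turn the exceedance event into an exact equality before applying the one-sided Jensen bound; the measurability of $h = \varphi$ needed to even speak of $P_h$ is immediate since $\varphi$ is monotone. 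No hard analysis is involved — the content is entirely in correctly composing the distributional identity $P_\varphi = Q$ with the Jensen-type property of $T$.
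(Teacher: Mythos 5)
Your proposal is correct and follows essentially the same route as the paper: both take $h = G^{-1}\circ F$, use convexity from $X \cleq Y$ together with the Jensen-type hypothesis and the identity $P_h = Q$ to get $h(T(P)) \leq T(Q)$, and then conclude by monotonicity (the paper phrases the last step as $F(T(P)) \leq G(T(Q))$ rather than via event inclusions, but this is the same argument). The false start in your write-up (the backwards inclusion) is correctly diagnosed and repaired, so the final proof stands.
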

\begin{proof}
  Assume \(T\) satisfies the first inequality, \(h(T(P)) \leq T(P_{h})\). Let
  \(F\) and \(G\) be the distribution functions of \(X\) and \(Y\),
  respectively, and \(h(x) = G^{-1}(F(x))\). Since both \(F\) and \(G\) are increasing, so is \(h\). The
  assumption \(X \cleq Y\) implies \(h\) is also convex so that
  \(G^{-1}(F(T(P))) = h(T(P)) \leq T(P_{h}) = T(Q)\). Since \(G\) is increasing
  it follows that \(F(T(P)) \leq G(T(Q))\).

  The second statement, for \(T\) satisfying \(h(T(P))
    \leq T(P_{h})\), follows by reproducing the same argument with the
  inequality reversed.
\end{proof}

The standard Jensen inequality implies that we may take as \(T\)
in Theorem~\ref{thm:genjensen} the expectation operator
  \(T(P) = \mathbb{E}(X)\) for \(X \sim P\). Hence we recover the result of van~Zwet mentioned above.
\begin{corollary}\label{corr:mainexpectation}
  Let \(X\) and \(Y\) be two random variables such that \(X \cleq Y\). Then
  \(\mathbb{P}(X \geq \mathbb{E}(X)) \geq \mathbb{P}(Y \geq \mathbb{E}(Y))\).
\end{corollary}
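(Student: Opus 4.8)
The plan is to obtain the corollary as an immediate application of Theorem~\ref{thm:genjensen}, specialised to the expectation functional. Concretely, I would let \(\mathcal{F}\) be the class of continuous probability distributions on intervals of \(\R\) that have a finite first moment, and set \(T(P) = \Expect(X)\) for \(X \sim P\). With this choice the first conclusion of Theorem~\ref{thm:genjensen}, namely \(\Prob(X \geq T(P)) \geq \Prob(Y \geq T(Q))\), is verbatim the inequality to be proved, so the whole argument reduces to checking that \(T\) satisfies the required Jensen-type hypothesis.

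The key step is verifying that for every \(P \in \mathcal{F}\) and every increasing convex \(h\) with \(P_{h} \in \mathcal{F}\) one has \(h(T(P)) \leq T(P_{h})\), that is, \(h(\Expect(X)) \leq \Expect(h(X))\). This is exactly Jensen's inequality: \(h\) is convex on the interval \(I\) supporting \(P\), the value \(\Expect(X)\) lies in \(I\) (in fact in its interior, since a strictly increasing distribution function corresponds to a non-degenerate distribution), and the right-hand side \(\Expect(h(X))\) is finite because \(P_{h} \in \mathcal{F}\); hence the inequality is a genuine comparison of real numbers rather than a vacuous bound by \(+\infty\). Reversing roles is not needed, since we only use the first form of Theorem~\ref{thm:genjensen}.

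The only point demanding a little care — and the closest thing to an obstacle here — is the bookkeeping needed to make the hypotheses of Theorem~\ref{thm:genjensen} literally applicable. One must know that the transformed distribution \(P_{h}\) associated with \(h = G^{-1} \circ F\) is precisely \(Q\) (the standard quantile-transform fact, which uses that \(F\) is continuous and strictly increasing, as already built into the definition of \(\cleq\)), and one must have \(Q \in \mathcal{F}\), i.e.\ \(Y\) must have a finite mean for \(T(Q) = \Expect(Y)\) to be defined. Since the statement of the corollary already presupposes that \(\Expect(X)\) and \(\Expect(Y)\) exist, restricting \(\mathcal{F}\) to finite-mean distributions costs nothing. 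Once this setup is in place, invoking Theorem~\ref{thm:genjensen} finishes the proof with no further computation.
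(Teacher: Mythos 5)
Your proof is correct and follows exactly the route the paper takes: specialise Theorem~\ref{thm:genjensen} to the expectation functional \(T(P) = \Expect(X)\) and verify the hypothesis \(h(T(P)) \leq T(P_{h})\) via Jensen's inequality. The extra bookkeeping you supply (finite-mean restriction, identifying \(P_{h}\) with \(Q\)) is a sound elaboration of what the paper leaves implicit.
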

Together with Theorem~\ref{thm:mainifr} this corollary now gives the following
monotonicity properties of Beta distributed random variables exceeding their
expectation.
\begin{corollary}\label{cor:exceeding_means}
  For each \(a, b > 0\) let \(X_{a, b} \sim \pbeta(a, b)\). Then \((a, b)
  \mapsto \mathbb{P}(X_{a, b} \geq \mathbb{E}(X_{a, b}))\) is increasing in \(a\) and
  decreasing in \(b\).
\end{corollary}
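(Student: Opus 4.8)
The plan is to obtain this as an immediate consequence of Theorem~\ref{thm:mainifr} together with Corollary~\ref{corr:mainexpectation}, the only genuine work being to keep the direction of the inequalities straight. Write $X_{a,b} \sim \pbeta(a,b)$ as in the statement.

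First I would treat monotonicity in $a$. Fix $b > 0$ and take $a' \le a$. Since $a \ge a'$ and $b \le b$, Theorem~\ref{thm:mainifr} gives $X_{a,b} \cleq X_{a',b}$. Applying Corollary~\ref{corr:mainexpectation} to this pair then yields $\mathbb{P}(X_{a,b} \ge \mathbb{E}(X_{a,b})) \ge \mathbb{P}(X_{a',b} \ge \mathbb{E}(X_{a',b}))$, which is exactly the assertion that $a \mapsto \mathbb{P}(X_{a,b} \ge \mathbb{E}(X_{a,b}))$ is increasing.

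The case of $b$ is symmetric. Fixing $a > 0$ and taking $b \le b'$, Theorem~\ref{thm:mainifr} gives $X_{a,b} \cleq X_{a,b'}$ (now using $a \ge a$ and $b \le b'$), and Corollary~\ref{corr:mainexpectation} gives $\mathbb{P}(X_{a,b} \ge \mathbb{E}(X_{a,b})) \ge \mathbb{P}(X_{a,b'} \ge \mathbb{E}(X_{a,b'}))$, so the map is decreasing in $b$. If one wants the combined statement, the same two applications chained through $X_{a',b}$ show that $\mathbb{P}(X_{a,b} \ge \mathbb{E}(X_{a,b})) \ge \mathbb{P}(X_{a',b'} \ge \mathbb{E}(X_{a',b'}))$ whenever $a \ge a'$ and $b \le b'$.

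There is essentially no obstacle here: the content is carried entirely by the cited results, and the only point requiring a moment's care is that in Theorem~\ref{thm:mainifr} the distribution with the \emph{larger} first parameter and \emph{smaller} second parameter is the \emph{smaller} one in the convex transform order, which by Corollary~\ref{corr:mainexpectation} is precisely the one with the \emph{larger} probability of exceeding its mean.
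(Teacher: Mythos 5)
Your proof is correct and is exactly the argument the paper intends: the corollary is stated as an immediate consequence of Theorem~\ref{thm:mainifr} combined with Corollary~\ref{corr:mainexpectation}, and you have applied them with the directions of the inequalities handled correctly. Nothing further is needed.
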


This provides immediate bounds for the probabilities of Beta distributed random
variables exceeding their expectation.
\begin{corollary}\label{cor:beta_over_mean}
  Let \(X_{a, b} \sim \pbeta(a, b)\), where \(a, b \geq 1\). Then
  \begin{equation*}
    e^{-1}
    <
    \left(\frac{b}{1+b}\right)^{b}
    \leq
    \mathbb{P}(X_{a, b} \geq \mathbb{E}(X_{a, b}))
    \leq
    1 -\left(\frac{a}{1+a}\right)^{a}
    <
    1-e^{-1}.
  \end{equation*}
\end{corollary}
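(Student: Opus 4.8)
The plan is to use the monotonicity established in Corollary~\ref{cor:exceeding_means} to reduce the two middle inequalities to explicitly computable boundary cases, and then to use an elementary estimate for the exponential to obtain the two strict outer inequalities. Concretely, by Corollary~\ref{cor:exceeding_means} the quantity \(\mathbb{P}(X_{a,b} \geq \mathbb{E}(X_{a,b}))\) is increasing in \(a\), so for \(a \geq 1\) it is at least its value at \(a = 1\); and it is decreasing in \(b\), so for \(b \geq 1\) it is at most its value at \(b = 1\). Note that these two reductions use the two halves of the monotonicity statement separately, each valid on the whole range of parameters, so their combination covers all \(a, b \geq 1\).

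It then remains to evaluate the boundary probabilities. Since \(\betafun(1, b) = 1/b\) and \(\betafun(a, 1) = 1/a\), the relevant densities are \(x \mapsto b(1-x)^{b-1}\) and \(x \mapsto a x^{a-1}\) on \((0,1)\), and the means are \(\mathbb{E}(X_{1,b}) = 1/(1+b)\) and \(\mathbb{E}(X_{a,1}) = a/(1+a)\). A direct integration then yields
\[
  \mathbb{P}\!\left(X_{1,b} \geq \tfrac{1}{1+b}\right) = \int_{1/(1+b)}^{1} b(1-x)^{b-1}\,dx = \left(\frac{b}{1+b}\right)^{b}
\]
and
\[
  \mathbb{P}\!\left(X_{a,1} \geq \tfrac{a}{1+a}\right) = \int_{a/(1+a)}^{1} a x^{a-1}\,dx = 1 - \left(\frac{a}{1+a}\right)^{a},
\]
which are exactly the two inner bounds in the statement.

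Finally, for the outer strict inequalities I would invoke the standard fact that \(t \mapsto (1 + 1/t)^{t}\) is strictly increasing on \((0, \infty)\) with limit \(e\), so that \((1 + 1/t)^{t} < e\), equivalently \((1 + 1/t)^{-t} > e^{-1}\), for every \(t > 0\). Applying this with \(t = b\) gives \(\left(b/(1+b)\right)^{b} > e^{-1}\), and with \(t = a\) gives \(\left(a/(1+a)\right)^{a} > e^{-1}\), hence \(1 - \left(a/(1+a)\right)^{a} < 1 - e^{-1}\). I do not anticipate any real obstacle: beyond Corollary~\ref{cor:exceeding_means} the only ingredients are the two elementary integrals and the monotone convergence \((1+1/t)^{t}\uparrow e\); the only mild point to be careful about is the bookkeeping of which half of the monotonicity is used for which bound, as noted above.
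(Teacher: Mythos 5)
Your proposal is correct and follows essentially the same route as the paper's (much terser) proof: reduce to the boundary cases \(a=1\) and \(b=1\) via Corollary~\ref{cor:exceeding_means}, evaluate the two elementary integrals, and obtain the strict outer bounds from \((1+1/t)^{t}\uparrow e\). All computations check out.
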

\begin{proof}
  Compute \(\mathbb{P}(X_{a, b} \geq \mathbb{E}(X_{a, b}))\) for \(a = 1\) or \(b = 1\),
  use the monotonicity given in Corollary~\ref{cor:exceeding_means}, and,
  finally allow \(a, b \to +\infty\) to find both numerical bounds.
\end{proof}

Using Theorem~\ref{thm:genjensen} we may prove similar monotonicity properties
for the probabilities of exceeding modes or anti-modes. Recall that an absolutely
continuous distribution is unimodal if it has a
continuous density with a unique maximizer and
uniantimodal if it has a continuous density with a
unique minimizer.
\begin{corollary}\label{corr:modejensen}
  Let \(X \sim P\) and \(Y \sim Q\) be two real valued random variables with
  absolutely continuous distributions \(P\) and \(Q\) supported on some
  intervals \(I\) and \(J\) and such that \(X \cleq Y\).

  If \(P\) and \(Q\) are unimodal with modes \(\mode(X)\) and \(\mode(Y)\),
  respectively, then \(\mathbb{P}(X \geq \mode(X)) \leq \mathbb{P}(Y \geq \mode(Y))\).

  If \(P\) and \(Q\) are uniantimodal with anti-modes \(\antimode(X)\) and
  \(\antimode(Y)\), respectively, then \(\mathbb{P}(X \geq \antimode(X)) \geq \mathbb{P}(Y
  \geq \antimode(Y))\).
\end{corollary}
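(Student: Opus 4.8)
The plan is to derive both statements from Theorem~\ref{thm:genjensen}, applied once with the mode and once with the anti-mode as the functional \(T\); all that needs checking is that these functionals satisfy the relevant Jensen-type inequality. For the unimodal statement I would let \(\mathcal{F}\) be the family of absolutely continuous distributions supported on intervals of \(\R\) whose distribution functions are strictly increasing and whose (continuous) densities have a unique maximiser, and take \(T(P)=\mode(P)\); for the uniantimodal statement I would instead require a unique minimiser and take \(T(P)=\antimode(P)\). Since \(X\sim P\) and \(Y\sim Q\) already lie in \(\mathcal{F}\) by hypothesis, it then suffices to prove, for every \(P\in\mathcal{F}\) and every convex and increasing \(h\) with \(P_{h}\in\mathcal{F}\) — which forces \(h\) to be strictly increasing on the interior of \(\operatorname{supp}(P)\), as \(P_{h}\) has no atoms — that
\[
  \mode(P_{h})\le h(\mode(P))
  \qquad\text{resp.}\qquad
  \antimode(P_{h})\ge h(\antimode(P)).
\]
The first inequality is exactly the hypothesis \(h(T(P))\ge T(P_{h})\) of the second part of Theorem~\ref{thm:genjensen}, whose conclusion is \(\mathbb{P}(X\ge\mode(X))\le\mathbb{P}(Y\ge\mode(Y))\); the second is the hypothesis \(h(T(P))\le T(P_{h})\) of the first part, whose conclusion is \(\mathbb{P}(X\ge\antimode(X))\ge\mathbb{P}(Y\ge\antimode(Y))\).

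To prove \(\mode(P_{h})\le h(\mode(P))\) I would write \(f\) and \(g\) for the densities of \(P\) and of \(P_{h}\) (the law of \(h(X)\)) and set \(m=\mode(P)\). From the identity \(G\circ h=F\) between distribution functions and the change of variables formula, \(f(x)=g(h(x))\,h'(x)\), hence \(g(h(x))=f(x)/h'(x)\), at (almost) every point \(x\). Now \(f\) has a strict global maximum at \(m\), while convexity together with strict monotonicity of \(h\) makes \(h'\) non-decreasing with values in \((0,\infty)\) in the interior; consequently, for every \(x>m\) in the interior of \(\operatorname{supp}(P)\),
\[
  g(h(x))=\frac{f(x)}{h'(x)}<\frac{f(m)}{h'(m)}=g(h(m)).
\]
Since \(h\) is an increasing homeomorphism this says \(g(y)\le g(h(m))\) for every \(y\ge h(m)\) in the support of \(P_{h}\), so \(g\) attains its maximum over that range at \(h(m)\); as \(g\) has a \emph{unique} maximiser, that maximiser cannot lie strictly to the right of \(h(m)\), i.e.\ \(\mode(P_{h})\le h(m)\). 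The uniantimodal case is the mirror image: \(f/h'\) is the product of a function with a strict minimum at \(\mu=\antimode(P)\) and the non-increasing positive factor \(1/h'\), so \(g(h(x))>g(h(\mu))\) for every \(x<\mu\) in the interior, and uniqueness of the minimiser of \(g\) forces \(\antimode(P_{h})\ge h(\mu)\).

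The part requiring care — and the main obstacle — is the interplay between the change of variables identity and the regularity of \(h\): a convex \(h\) is differentiable off a countable set, and the symbol \(h'\) above should be read as a one-sided derivative, which is finite and strictly positive in the interior precisely because \(P_{h}\in\mathcal{F}\) has a bounded density (so the denominators do not degenerate even when \(m\), resp. \(\mu\), happens to sit at an endpoint). With this, the displayed strict inequalities hold on a dense set of \(x\) and extend to the relevant closed half-line by continuity of \(f\) and \(g\), and uniqueness of the (anti-)mode of \(P_{h}\) disposes of the single remaining boundary point. Once this technical verification is in place, both conclusions follow simply by feeding \(T=\mode\) and \(T=\antimode\) into Theorem~\ref{thm:genjensen}.
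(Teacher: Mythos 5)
Your proposal is correct and follows essentially the same route as the paper: both reduce the statement to Theorem~\ref{thm:genjensen} with \(T=\mode\) (resp.\ \(T=\antimode\)) and verify the required Jensen-type inequality via the change-of-variables identity \(g(h(x))=f(x)/h'(x)\) together with monotonicity of \(h'\); your direct pointwise comparison of \(g\) on \([h(m),\infty)\) is a minor variant of the paper's two-inequality sandwich and, by using strictness of \(f(x)<f(m)\), actually sidesteps the small gap in the paper's step ``\(h'(h^{-1}(m'))\le h'(m)\) implies \(m'\le h(m)\)'' when \(h'\) is only non-decreasing. You also correctly identify that the mode case needs \(T(P_h)\le h(T(P))\), i.e.\ the second alternative of Theorem~\ref{thm:genjensen}, whereas the paper's prose misstates the direction of the inequality it then proceeds to prove.
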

\begin{proof}
  We prove only the result about modes, as the statement about anti-modes
  follows analogously.

  Define \(\mathcal{F}\) as the set of absolutely continuous unimodal
  distributions supported in some interval in \(\R\) and \(T \colon \mathcal{F}
  \to \R\) the functional defined by \(T(P)\) being equal to the unique mode of
  \(P\), for every \(P \in \mathcal{F}\). By Theorem~\ref{thm:genjensen} it
  suffices to prove that \(T\) satisfies \(h(T(P)) \leq T(P_{h})\), for every
  \(P \in \mathcal{F}\), and \(h\) convex and increasing such that
  \(P_h\in\mathcal{F}\). For this purpose, choose \(f\) to be a continuous and
  unimodal version of the density of \(P\), and denote, for notational
  simplicity, the unique mode by \(m\). It is immediate that \(g(x) =
  f(h^{-1}(x))/h'(h^{-1}(x))\) is a density for \(P_{h}\). Since \(P_{h}\) has
  some continuous density with a unique mode and \(h\) is increasing and convex,
  \(g\) must be such a density. Denote the mode \(T(P_{h})\) by \(m'\).

  Since \(m\) is a mode of \(P\) it follows that \(f(m)\geq f(h^{-1}(m'))\) and,
  by the unimodality of \(P_{h}\), it follows that
  \begin{equation*}
    \frac{f(h^{-1}(m'))}{h'(h^{-1}(m'))} = g(m') \geq g(h(m)) =  \frac{f(m)}{h'(m)}.
  \end{equation*}
  Consequently \(h'(h^{-1}(m')) \leq h'(m)\), which in turn implies that
  \(m'\leq h(m)\), since \(h'\) and \(h\) are both increasing. The conclusion
  now follows immediately from Theorem~\ref{thm:genjensen}.
\end{proof}

Similarly to Corollary~\ref{cor:exceeding_means}, the previous result implies
monotonicity properties for the probability of exceeding the mode or anti-mode
for Beta distributions. For this to work we must restrict ourselves to parameters \(a\)
and \(b\) such that \(\pbeta(a, b)\) actually has a unique mode or anti-mode. This happens  when \(a, b > 1\) or \(a, b < 1\), respectively. In either case
the mode or anti-mode is \((a-1)/(a+b-2)\).
\begin{corollary}\label{cor:mainmode}
  For \(a, b > 0\) let \(X_{a,b} \sim \pbeta(a, b)\).

  If \(a, b > 1\) let \(\mode(X_{a,b})\) be the mode of \(\pbeta(a, b)\), then the mapping \((a,
  b) \mapsto \mathbb{P}(X_{a,b} > \mode(X_{a,b}))\) is decreasing in \(a\) and
  increasing in \(b\).

  If \(a, b<1\) let \(\antimode(X_{a,b})\) be the anti-mode of \(\pbeta(a, b)\), then the mapping
  \((a, b) \mapsto \mathbb{P}(X_{a,b}>\antimode(X_{a,b}))\) is increasing in \(a\)
  and decreasing in \(b\).
\end{corollary}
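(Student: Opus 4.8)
The plan is to obtain this as a direct consequence of Theorem~\ref{thm:mainifr} and Corollary~\ref{corr:modejensen}, in complete analogy with how Corollary~\ref{cor:exceeding_means} is derived from Theorem~\ref{thm:mainifr} and Corollary~\ref{corr:mainexpectation}. Since every Beta distribution is absolutely continuous, \(\mathbb{P}(X_{a,b} > c) = \mathbb{P}(X_{a,b} \geq c)\) for all \(c\), so it suffices to prove the stated monotonicity for the probabilities \(\mathbb{P}(X_{a,b} \geq \mode(X_{a,b}))\) and \(\mathbb{P}(X_{a,b} \geq \antimode(X_{a,b}))\) that appear in Corollary~\ref{corr:modejensen}.

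For the mode statement I would work throughout in the region \(a, b > 1\), where \(\pbeta(a, b)\) has a continuous density with a unique maximiser, so Corollary~\ref{corr:modejensen} applies to any pair of such distributions. To get monotonicity in \(a\), fix \(b > 1\) and take \(1 < a_1 \leq a_2\); Theorem~\ref{thm:mainifr} gives \(\pbeta(a_2, b) \cleq \pbeta(a_1, b)\), and applying the mode part of Corollary~\ref{corr:modejensen} with \(X \sim \pbeta(a_2, b)\) and \(Y \sim \pbeta(a_1, b)\) yields \(\mathbb{P}(X_{a_2, b} \geq \mode(X_{a_2, b})) \leq \mathbb{P}(X_{a_1, b} \geq \mode(X_{a_1, b}))\), i.e.\ the map is decreasing in \(a\). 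Symmetrically, fixing \(a > 1\) and taking \(1 < b_1 \leq b_2\), Theorem~\ref{thm:mainifr} gives \(\pbeta(a, b_1) \cleq \pbeta(a, b_2)\), and Corollary~\ref{corr:modejensen} gives \(\mathbb{P}(X_{a, b_1} \geq \mode(X_{a, b_1})) \leq \mathbb{P}(X_{a, b_2} \geq \mode(X_{a, b_2}))\), i.e.\ the map is increasing in \(b\).

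The anti-mode statement is proved in exactly the same manner, now in the region \(a, b < 1\) where \(\pbeta(a, b)\) has a continuous density with a unique minimiser, using the anti-mode part of Corollary~\ref{corr:modejensen}; since that inequality points in the opposite direction, the resulting monotonicity is reversed, giving a probability of exceeding the anti-mode that is increasing in \(a\) and decreasing in \(b\).

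I do not expect any genuine obstacle here. The only points requiring a word of justification are that the two distributions being compared always lie in the unimodal (respectively uniantimodal) class — which is automatic because we never leave the open region \(a, b > 1\) (respectively \(a, b < 1\)) — and that the strict inequality \(>\) in the statement may be replaced by \(\geq\) by absolute continuity. Consequently the actual write-up should amount to only a handful of lines.
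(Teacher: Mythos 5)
Your proposal is correct and is exactly the argument the paper intends: the corollary is stated as an immediate consequence of Theorem~\ref{thm:mainifr} combined with Corollary~\ref{corr:modejensen}, in direct analogy with the derivation of Corollary~\ref{cor:exceeding_means}, and your directional bookkeeping (decreasing in \(a\), increasing in \(b\) for the mode; reversed for the anti-mode) matches the statement. The two side remarks you flag --- staying inside the open regions \(a,b>1\) or \(a,b<1\) so that unimodality/uniantimodality holds, and replacing \(>\) by \(\geq\) via absolute continuity --- are precisely the only points needing mention.
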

Recall that \(B \sim \pbinom(n, p)\) if \(\mathbb{P}(B = k) =
\binom{n}{k}p^{k}(1-p)^{n-k}\)for \(n = 1, 2, \dotsc\) and \(k \in \{1, \dotsc, n\}\). Using a link between the Beta and the Binomial distributions
allows us to prove some monotonicity properties for the probabilities that a
Binomial variable exceeds certain values close to its mean. As noted in the
Introduction, the quantity \(\mathbb{P}(B_{n,p} \leq np)\), where \(B_{n,p} \sim
\pbinom(n,p)\) has garnered some interest recently. The mapping \(p \mapsto
\mathbb{P}(B_{n,p} \leq np)\) is not monotone even when restricting to \(p = 0, 1/n,
\ldots, (n-1)/n, 1\), where \(np\) is an integer. Using our results we prove
that slightly changing \(np\) renders monotonicity.
\begin{corollary}\label{cor:binomial}
  For \(n = 2, 3, \dotsc\) and for each \(p \in [0, 1]\) let \(B_{n,p} \sim
  \pbinom(n,p)\). The mapping \(p \mapsto \mathbb{P}(B_{n,p} > np-p)\) is increasing for
  \(p = 1/(n-1), \ldots, (n-2)/(n-1)\), and the mapping \(p \mapsto \mathbb{P}(B_{n, p} >
  np-(1-p))\) is decreasing for \(p = 1/(n+1), \ldots, n/(n+1)\).
\end{corollary}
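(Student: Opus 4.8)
The plan is to exploit the classical identity linking the Binomial and Beta distribution functions: for \(B_{n,p} \sim \pbinom(n,p)\) and any integer \(k \in \{1, \dots, n\}\),
\[
  \mathbb{P}(B_{n,p} \ge k) = \mathbb{P}(Z \le p), \qquad Z \sim \pbeta(k, n-k+1),
\]
which follows by writing the right-hand side as an incomplete Beta integral and integrating by parts (equivalently, by checking that both sides agree at \(p = 0\) and have the same derivative in \(p\)). With this in hand, each of the two monotonicity claims reduces to a statement about Beta distributions already established in Section~\ref{sec:applications}. Throughout, one may freely swap strict and non-strict inequalities since the Beta distributions involved are absolutely continuous.

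For the first mapping I would put \(p = j/(n-1)\), so that \(np - p = (n-1)p = j\) is an integer and \(\{B_{n,p} > np-p\} = \{B_{n,p} \ge j+1\}\). The identity above then gives \(\mathbb{P}(B_{n,p} > np-p) = \mathbb{P}(X_{j+1,n-j} \le p)\) with \(X_{j+1,n-j} \sim \pbeta(j+1, n-j)\). The key observation is that for \(1 \le j \le n-2\) both parameters \(j+1\) and \(n-j\) exceed \(1\), so \(\pbeta(j+1,n-j)\) is unimodal with mode \(\frac{(j+1)-1}{(j+1)+(n-j)-2} = \frac{j}{n-1} = p\). Hence \(\mathbb{P}(B_{n,p} > np-p) = 1 - \mathbb{P}(X_{j+1,n-j} > \mode(X_{j+1,n-j}))\); as \(p\) runs through \(\tfrac{1}{n-1}, \dots, \tfrac{n-2}{n-1}\) the index \(j\) runs through \(1, \dots, n-2\), and increasing \(j\) increases the first parameter while decreasing the second, so Corollary~\ref{cor:mainmode} shows \(\mathbb{P}(X_{j+1,n-j} > \mode(X_{j+1,n-j}))\) decreases, i.e.\ \(p \mapsto \mathbb{P}(B_{n,p} > np-p)\) increases.

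For the second mapping I would argue analogously with \(p = j/(n+1)\), so that \(np - (1-p) = (n+1)p - 1 = j - 1\) and \(\{B_{n,p} > np-(1-p)\} = \{B_{n,p} \ge j\}\), giving \(\mathbb{P}(B_{n,p} > np-(1-p)) = \mathbb{P}(X_{j,n-j+1} \le p)\) with \(X_{j,n-j+1} \sim \pbeta(j,n-j+1)\). This time the relevant observation is that \(p = \frac{j}{n+1}\) is exactly the mean \(\frac{j}{j+(n-j+1)}\) of \(\pbeta(j,n-j+1)\), so \(\mathbb{P}(B_{n,p} > np-(1-p)) = 1 - \mathbb{P}(X_{j,n-j+1} \ge \mathbb{E}(X_{j,n-j+1}))\); as \(p\) runs through \(\tfrac{1}{n+1}, \dots, \tfrac{n}{n+1}\), \(j\) runs through \(1, \dots, n\), increasing \(j\) increases the first parameter and decreases the second, and Corollary~\ref{cor:exceeding_means} gives that \(\mathbb{P}(X_{j,n-j+1} \ge \mathbb{E}(X_{j,n-j+1}))\) is increasing in \(j\), hence \(p \mapsto \mathbb{P}(B_{n,p} > np-(1-p))\) is decreasing.

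I do not expect any real obstacle; the argument is essentially bookkeeping. The one point needing care is the matching of endpoints: the hypothesis \(a, b > 1\) required to invoke Corollary~\ref{cor:mainmode} forces precisely \(j \in \{1, \dots, n-2\}\), i.e.\ \(p \in \{\tfrac{1}{n-1}, \dots, \tfrac{n-2}{n-1}\}\), whereas Corollary~\ref{cor:exceeding_means} has no such restriction and covers the full range \(j \in \{1, \dots, n\}\); one should also verify the orientation of the Binomial--Beta identity so that monotonicity in \(j\) translates into monotonicity in \(p\) with the sign claimed.
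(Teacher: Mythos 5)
Your proposal is correct and follows essentially the same route as the paper: the Binomial--Beta identity \(\mathbb{P}(B_{n,p}\le k)=\mathbb{P}(X_{k+1,n-k}\ge p)\), the observation that the chosen values of \(p\) are exactly the mode (resp.\ mean) of the relevant Beta distribution, and an appeal to Corollaries~\ref{cor:mainmode} and~\ref{cor:exceeding_means}. Your version is in fact slightly more careful than the paper's about the reindexing and about why the mode-based argument restricts \(p\) to \(\{\tfrac{1}{n-1},\dotsc,\tfrac{n-2}{n-1}\}\).
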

\begin{proof}
  For each \(a, b > 0\) let \(X_{a,b} \sim \pbeta(a,b)\). It is well-known that
  \(\mathbb{P}(X_{k+1,n-k} \geq p) = \mathbb{P}(B_{n,p} \leq k)\), for \(k = 0, \ldots,
  n\). The equality can for example be established by repeated integration by
  parts. As the distribution of \(X_{k+1,n-k}\) has mean \((k+1)/(n+1)\) and
  mode \(k/(n-1)\), it follows from
  Corollaries~\ref{cor:exceeding_means}~and~\ref{cor:mainmode}, that \(k \mapsto
  \mathbb{P}(B_{n,\frac{k+1}{n+1}} \geq k)\) is decreasing and \(k \mapsto
  \mathbb{P}(B_{n,\frac{k}{n-1}}\geq k)\) is increasing. Reparameterising in terms of
  \(p\) yields \(k=np+p-1\) and \(k=np-p\), so the result follows.
\end{proof}

\subsection{(Anti)mode-median-mean inequalities}

If \(X_{a, b} \sim \pbeta(a, b)\) then the random variable \(1-X_{a, b}\) is
distributed according to \(\pbeta(b,a)\). As the convex transform order is
invariant with respect to translations, Theorem~\ref{thm:mainifr} implies that
when \(a \leq b\) we have that \(-X_{a, b} \cleq X_{a, b}\). Since the convex
transform order orders only the underlying distribution the following definition
due to % van~Zwet~
\textcite{VZ-79} is justified.
\begin{definition}[Positive/negative skew]\label{def:skew}
  Let \(P\) be a probability distribution and \(X \sim P\) a random variable
  with distribution \(P\). We say that \(P\) is \emph{positively skewed} if \(-X
  \cleq X\) and that \(P\) is \emph{negatively skewed} if \(X \cleq -X\).
\end{definition}
Thus, according to this definition, the Beta distributions have positive skew
when \(a \leq b\) and negative skew when \(a \geq b\).

As noted by % van~Zwet~
\textcite{VZ-79} Definition~\ref{def:skew} provides an intuitive condition under
which inequalities between the mode, median, and mean hold. We give an
alternative proof of this fact. This alternative proof is based on the results
in the previous section and yields a similar inequality for the anti-mode.
\begin{theorem}\label{thm:mmm}
  Let \(P\) be a positively skewed distribution.

  If \(P\) is unimodal with mode \(m_{0}\), then there exists a median \(m_{1}\)
  of \(P\) such that \(m_{0} \leq m_{1}\).

  If \(P\) has finite mean \(m_{2}\), then there exists a median \(m_{1}\) of
  \(P\) such that \(m_{1} \leq m_{2}\).

  If \(P\) is uniantimodal with anti-mode \(m_{3}\), then there exists a median
  \(m_{1}\) of \(P\) such that \(m_{1} \leq m_{3}\).
\end{theorem}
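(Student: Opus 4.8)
The plan is to read off all three inequalities from the exceedance-probability results of the previous subsection, applied to the single relation \(-X \cleq X\) which, by Definition~\ref{def:skew}, is exactly what positive skewness of \(P\) means (with \(X \sim P\)). First I would record two preliminary observations. One: since \(-X \cleq X\) is a convex-transform comparison, Definition~\ref{def:corder} forces \(P\) to have a strictly increasing distribution function \(F\), which in the present setting is continuous; hence \(P\) has a unique median \(m_{1}\), characterised by \(F(m_{1}) = 1/2\), and no atoms, so that \(\mathbb{P}(X \leq c) = F(c)\) and \(\mathbb{P}(X \geq c) = 1 - F(c)\) for every \(c\) (the existential phrasing in the statement is thus merely a convenience, and in fact survives even if atoms were allowed). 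Two: reflection about the origin preserves everything we need — \(-X\) again has a continuous strictly increasing distribution function, it is unimodal with mode \(-m_{0}\) whenever \(X\) is unimodal with mode \(m_{0}\), uniantimodal with anti-mode \(-m_{3}\) whenever \(X\) is uniantimodal with anti-mode \(m_{3}\), and \(\mathbb{E}(-X) = -m_{2}\) is finite whenever \(m_{2} = \mathbb{E}(X)\) is. In particular the law of \(-X\) lies in whichever of the relevant classes \(\mathcal{F}\) is needed, so the pairs below satisfy the hypotheses of Corollaries~\ref{corr:mainexpectation} and \ref{corr:modejensen}.

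Next I would treat the three cases uniformly, each time extracting an inequality of the form ``\(F\) at the relevant point is on one side of \(1/2\)'' and finishing via monotonicity of \(F\) together with \(F(m_{1}) = 1/2\). For the mean: applying Corollary~\ref{corr:mainexpectation} to \(-X \cleq X\) gives \(\mathbb{P}(-X \geq \mathbb{E}(-X)) \geq \mathbb{P}(X \geq \mathbb{E}(X))\), i.e.\ \(F(m_{2}) \geq 1 - F(m_{2})\), so \(F(m_{2}) \geq 1/2 = F(m_{1})\) and hence \(m_{1} \leq m_{2}\). For the mode: applying the first part of Corollary~\ref{corr:modejensen} to \(-X \cleq X\) (both laws unimodal) gives \(\mathbb{P}(-X \geq \mode(-X)) \leq \mathbb{P}(X \geq \mode(X))\), i.e.\ \(F(m_{0}) \leq 1 - F(m_{0})\), so \(F(m_{0}) \leq 1/2 = F(m_{1})\) and hence \(m_{0} \leq m_{1}\). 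For the anti-mode: applying the second part of Corollary~\ref{corr:modejensen} to \(-X \cleq X\) (both laws uniantimodal) gives \(\mathbb{P}(-X \geq \antimode(-X)) \geq \mathbb{P}(X \geq \antimode(X))\), i.e.\ \(F(m_{3}) \geq 1 - F(m_{3})\), so \(F(m_{3}) \geq 1/2 = F(m_{1})\) and hence \(m_{1} \leq m_{3}\). In each case the rewriting of the two probabilities uses only \(\mode(-X) = -m_{0}\), \(\antimode(-X) = -m_{3}\), \(\mathbb{E}(-X) = -m_{2}\), and the atom-free identities recorded above.

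I do not expect a genuine obstacle here: all the substance is already packaged into Corollaries~\ref{corr:mainexpectation} and \ref{corr:modejensen} (and, through them, into Theorem~\ref{thm:genjensen}), and what remains is the bookkeeping that converts ``probability of exceeding the mode, mean, or anti-mode'' into the value of \(F\) at that point. The only spot deserving a moment of care is the opening measure-theoretic setup — checking that positive skewness really does deliver a well-defined (indeed unique) median and the atom-free identities, and that the reflected distribution genuinely inherits absolute continuity, unimodality or uniantimodality, and finiteness of the mean, so that each corollary may legitimately be invoked for the pair \(-X \cleq X\). Once that is settled, all three assertions drop out from the single comparison ``\(F(\,\cdot\,)\) versus \(1/2\)'' at the relevant point.
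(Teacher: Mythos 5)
Your proof is correct and takes essentially the same route as the paper's: both apply Corollaries~\ref{corr:mainexpectation} and~\ref{corr:modejensen} to the single comparison \(-X \cleq X\) furnished by positive skewness, and translate the resulting exceedance inequalities into a comparison of \(F\) with \(1/2\) at the mode, mean, or anti-mode. The only (immaterial) difference is bookkeeping: the paper works with the particular median \(\sup\{m \mid \mathbb{P}(X \leq m) \leq 1/2\}\) rather than arguing, as you do, that the median is unique.
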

\begin{proof}
  We prove only the first statement as the remaining ones are proved
  analogously. Let \(X\) be a random variable with distribution \(P\) and
  \(m_0\) the mode of \(P\). Then \(m_{1} = \sup\{ m \mid \mathbb{P}(X \leq m) \leq
  1/2 \}\) is a median of \(P\). Since \(P\) is positively skewed it follows by
  Corollary~\ref{corr:modejensen} that \(\mathbb{P}(X \leq m_0) \leq \mathbb{P}(-X \leq
  -m_0)\). Moreover, \(\mathbb{P}(-X \leq -m_0) = 1 - \mathbb{P}(X \leq m_0)\), so that
  \(\mathbb{P}(X \leq m_0) \leq 1/2\). Therefore \(m_{0} \leq m_{1}\). For the second
  statement apply Corollary~\ref{corr:mainexpectation} instead of
  Corollary~\ref{corr:modejensen}.
\end{proof}
Having a median lying between the mode and mean is usually
  called satisfying the \emph{mode-median-mean inequality}. Analogously we will
  say that a distribution satisfies the \emph{median-anti-mode} inequality if it
  has a median smaller than its anti-mode.

As noted above, when \(a \leq b\), the distribution \(\pbeta(a, b)\) is
positively skewed. The following slight generalisation of the known result
concerning the ordering of the mode, median, and mean of the Beta distribution
is now immediate (see for example \textcite{Runnenburg1978}).
\begin{corollary}
  If \(1 \leq a \leq b\) then \(\pbeta(a, b)\) satisfies the mode-median-mean
  inequality. If \(a \leq b \leq 1\) then \(\pbeta(a, b)\) satisfies the
  median-mean and median-anti-mode inequalities.
\end{corollary}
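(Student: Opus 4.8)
The plan is to observe that this corollary is an almost immediate consequence of Theorem~\ref{thm:mmm} once we have correctly identified which Beta distributions are positively skewed, unimodal, uniantimodal, and when they have finite mean. The discussion immediately preceding Definition~\ref{def:skew} establishes that $\pbeta(a,b)$ is positively skewed precisely when $a \leq b$, so that hypothesis is available in both cases. Since every $\pbeta(a,b)$ has finite mean $a/(a+b)$, the median-mean part of both claims is always in force whenever $a \leq b$. So the entire content of the corollary reduces to pairing the correct unimodality/uniantimodality statement with the correct half of Theorem~\ref{thm:mmm}.

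First I would treat the case $1 \leq a \leq b$. Here I recall (as already noted in the excerpt, in the discussion preceding Corollary~\ref{cor:mainmode}) that $\pbeta(a,b)$ is unimodal with a unique mode whenever $a, b > 1$; the boundary subcase where $a = 1$ (forcing $b \geq 1$) needs a separate remark, since then the density $x \mapsto (1-x)^{b-1}/\betafun(1,b)$ is monotone decreasing, so the mode is at the endpoint $0$ and the mode-median-mean inequality $0 \leq m_1 \leq m_2$ holds trivially. For the generic subcase $a > 1$, Theorem~\ref{thm:mmm} gives a median $m_1$ with $m_0 \leq m_1$ (first statement) and a median $m_1$ with $m_1 \leq m_2$ (second statement); combining these, using that one may take the same median (the sup-median used in the proof of Theorem~\ref{thm:mmm} witnesses both inequalities simultaneously), yields the mode-median-mean inequality.

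Next I would treat $a \leq b \leq 1$. Here the density $x^{a-1}(1-x)^{b-1}$ is U-shaped (convex, blowing up at both endpoints when $a, b < 1$), so $\pbeta(a,b)$ is uniantimodal with unique anti-mode $(a-1)/(a+b-2)$, and the third statement of Theorem~\ref{thm:mmm} supplies a median $m_1 \leq m_3$. The second statement of Theorem~\ref{thm:mmm} supplies a median $m_1 \leq m_2$, giving the median-mean inequality. Again the two assertions can be made with a common median. The only delicate boundary subcase is $b = 1$ (forcing $a \leq 1$): if $a < 1$ the density is monotone decreasing and has no interior anti-mode, so one should interpret the anti-mode as the endpoint $1$, making the median-anti-mode inequality $m_1 \leq 1$ trivial; if $a = b = 1$ the distribution is uniform and both inequalities are equalities at $1/2$.

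The main obstacle is essentially bookkeeping rather than mathematics: one must be careful that the version of "mode" (resp. "anti-mode") used is consistent with the uniqueness hypothesis in Corollary~\ref{corr:modejensen} / Theorem~\ref{thm:mmm}, handling the degenerate boundary parameter values ($a=1$ or $b=1$) by a direct elementary argument, and verifying that a single choice of median — namely $m_1 = \sup\{m : \mathbb{P}(X \leq m) \leq 1/2\}$ as in the proof of Theorem~\ref{thm:mmm} — simultaneously satisfies all the relevant one-sided inequalities so that they chain into the full (anti)mode-median-mean statement. No genuinely new estimate is needed beyond what Theorem~\ref{thm:mmm} already provides.
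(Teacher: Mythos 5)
Your proposal is correct and follows essentially the same route as the paper, which presents this corollary as an immediate consequence of Theorem~\ref{thm:mmm} together with the observation that \(\pbeta(a,b)\) is positively skewed when \(a \leq b\). Your additional bookkeeping — checking the boundary parameter values \(a=1\) and \(b=1\), and noting that the (unique, since the Beta distribution function is continuous and strictly increasing) median witnesses all the one-sided inequalities simultaneously — is exactly the detail the paper leaves implicit, and it checks out.
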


\section{Proofs}
\label{sec:proofs}

This section collects all the proofs related to establishing
Theorems~\ref{thm:betast},~\ref{thm:betaifra}~and~\ref{thm:mainifr}, stated in
Section~\ref{sec:main}. 

Most of the proofs rely on keeping track of sign changes of various functions.
Throughout \(S(x \in I \mapsto f(x)) = S(x \mapsto f(x)) = S(f(x)) = S(f) \in
\mathcal{S} = \{\snull, \sminus, \splus, \sminus\splus, \splus\sminus, \dotsc
\}\) denotes the sequence of signs of a function \(f \colon I \to \R\). Formal
definitions, notation, and standard results concerning sign patterns can be
found in later in Appendix~\ref{sec:overview}.

The following technical lemma summarises the basic strategy used throughout the
proofs of the main results in the upcoming sections.
\begin{lemma}\label{lemma:mainlemma}
  For \(a, b, a', b', c > 0\) and \(d < 1\) denote by \(F\) and \(G\) the
  distribution functions of \(\pbeta(a, b)\) and \(\pbeta(a', b')\) and
  \(\ell(x) = cx + d\). Then for \(I = \{ x \in [0, 1] \mid 0 < \ell(x) < 1 \} =
  (\max(0, -\frac{d}{c}), \min(1, \frac{1-d}{c}))\) one has
  \begin{align}
    S(x \in [0, 1] \mapsto F(x) - G(\ell(x)))
    & =
      S(x \in I \mapsto F(x) - G(\ell(x)))
      \label{eq:mainlemma1}
    \\ & \leq
         \sigma_{1} \cdot S(x \in I \mapsto p_{1}(x))
         \label{eq:mainlemma2}
    \\ & \leq
          \sigma_{1} \cdot S(x \in I \mapsto p_{2}(x))
         \label{eq:mainlemma3}
    \\ & \leq
         \sigma_{1} \cdot \sigma_{2} \cdot S(x \in I \mapsto p_{3}(x))
         \label{eq:mainlemma4}
    \\ & \leq
         \sigma_{1} \cdot \sigma_{2} \cdot S(x \in I \mapsto p_{4}(x)),
         \label{eq:mainlemma5}
  \end{align}
  where
  \begin{equation*}
    \sigma_{1} = \sign(-d),
    \qquad
    \sigma_{2} =
    \begin{cases}
      \sign(a' - a), & \text{if \(d = 0, a' \neq a\),} \\
      \sign(\mathrlap{1}\phantom{a'} - a), & \text{if \(d > 0, \mathrlap{a}\phantom{a'} \neq 1\),} \\
      \sign(a' - 1), & \text{if \(d < 0, a' \neq 1\),} \\
      \text{\(\snull\), \(\sminus\), or \(\splus\)}, & \text{otherwise,}\\
    \end{cases}
  \end{equation*}
  and
  \begin{align*}
    p_{1}(x)
    &=
      \frac{x^{a-1}(1-x)^{b-1}}{\betafun(a, b)}
      -
      \frac{\ell(x)^{a'-1}(1-\ell(x))^{b'-1}}{\betafun(a', b')},
    \\
    p_{2}(x)
    &=
      (a-1)\log(x) + (b-1)\log(1-x)
    \\
    &\quad
      -(a'-1)\log(\ell(x))-(b'-1)\log(1 - \ell(x)) + C,
    \\
    p_{3}(x)
    &=
            \frac{a-1}{x}
      -     \frac{b-1}{1-x}
      - \frac{c(a'-1)}{\ell(x)}
      +  \frac{c(b'-1)}{1-\ell(x)},
    \\
    p_{4}(x)
    &=
      c_{3}x^{3} + c_{2}x^{2} + c_{1}x + c_{0},
  \end{align*}
  for \(c_{3} = (a - a' + b - b')c^{2}\), \(c_{2} = -(a - a' + 1 - b')c^{2} - (a
  - a' + b - 1)c(1-d) - (b' - b + 1 - a)cd\), \(c_{1} = (a - a')c(1-d)
  - (a - b')cd - (a + b - 2)(1-d)d\), \(c_{0} = -(a-1)(d-1)d\), and \(C =
  \log\frac{\betafun(a', b')}{c\betafun(a, b)}\).
\end{lemma}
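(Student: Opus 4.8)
The plan is to establish the five relations one at a time, each time replacing the current function by a simpler one whose sign pattern is comparable, and invoking the sign-change lemmas collected in Appendix~\ref{sec:overview}. For \eqref{eq:mainlemma1}, note that outside $I$ the affine map $\ell$ sends $[0,1]$ out of the open unit interval: if $x \le \max(0,-d/c)$ then $\ell(x) \le 0$, so $G(\ell(x)) = 0$ and $F(x)-G(\ell(x)) = F(x) \ge 0$, whereas if $x \ge \min(1,(1-d)/c)$ then $\ell(x) \ge 1$, so $G(\ell(x)) = 1$ and $F(x)-G(\ell(x)) = F(x)-1 \le 0$. Hence on each of the (at most two) components of $[0,1]\setminus I$ the function $F-G\circ\ell$ has one fixed sign, agreeing with its one-sided limit at the adjacent endpoint of $I$; the appendix lemma on adjoining constant-sign tails then gives that its sign pattern over $[0,1]$ equals its sign pattern over $I$.

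Writing $\phi = F-G\circ\ell$, the next two steps carry $\phi$ to $p_1$ and then to $p_2$. On $I$ the map $\ell$ takes values in $(0,1)$, where the densities $f$ of $\pbeta(a,b)$ and $g$ of $\pbeta(a',b')$ are smooth and strictly positive, so $\phi$ is $C^1$ there with $\phi'(x) = f(x)-c\,g(\ell(x))$; this is $p_1$, and since $f,g,c>0$ its sign at each point coincides with that of its logarithmic form $p_2(x) = \log\!\bigl(f(x)/(c\,g(\ell(x)))\bigr)$ — this is what the constant $C$ encodes — so $S(p_1|_I) = S(p_2|_I)$. Now the one-sided limit of $\phi$ at the left endpoint of $I$ is $F(-d/c) \ge 0$ when $d<0$, is $-G(d) < 0$ when $d>0$, and is $0$ when $d=0$; its sign is therefore $\sigma_1 = \sign(-d)$, the degenerate case $d=0$ being deferred to $\sigma_2$. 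The appendix lemma bounding the sign pattern of a $C^1$ function by that of its derivative, after prepending the sign at the left endpoint, then yields $S(\phi|_I) \le \sigma_1\cdot S(p_1|_I) = \sigma_1\cdot S(p_2|_I)$, establishing \eqref{eq:mainlemma2} and \eqref{eq:mainlemma3}.

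For \eqref{eq:mainlemma4} I would differentiate once more: $p_2'(x) = \frac{a-1}{x}-\frac{b-1}{1-x}-\frac{c(a'-1)}{\ell(x)}+\frac{c(b'-1)}{1-\ell(x)} = p_3(x)$ on $I$. The sign of the one-sided limit of $p_2$ at the left endpoint of $I$ is then read off from the dominant logarithmic term there: for $d<0$ this is $-(a'-1)\log\ell(x)$, contributing $\sign(a'-1)$; for $d>0$ it is $(a-1)\log x$, contributing $\sign(1-a)$; for $d=0$ it is $(a-a')\log x$, contributing $\sign(a'-a)$; and in the remaining three configurations this limit is finite with a sign not pinned down by the data, which is precisely the ``otherwise'' clause of $\sigma_2$. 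Applying the same function-versus-derivative lemma gives $\sigma_1\cdot S(p_2|_I) \le \sigma_1\sigma_2\cdot S(p_3|_I)$. Finally, for \eqref{eq:mainlemma5}, the weight $w(x) = x(1-x)\ell(x)(1-\ell(x))$ is strictly positive on $I$, so $S(p_3|_I) = S\bigl((w\,p_3)|_I\bigr)$; expanding $w\,p_3$ shows it to be a polynomial of degree at most three, and matching coefficients identifies it with $p_4$ — for instance its leading coefficient is $(a-a'+b-b')c^2 = c_3$.

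The routine ingredients are the two derivative computations and the expansion of $w\,p_3$ into the cubic $p_4$, which reduces to a short coefficient calculation. The step I expect to be the real obstacle is pinning down $\sigma_1$ and $\sigma_2$: one must split into the regimes $d<0$, $d=0$, $d>0$, which move the left endpoint of $I$ and so decide whether it is $x$ or $\ell(x)$ that tends to a boundary of the unit interval, and within each regime treat the sub-cases comparing $a$, $a'$ and $1$; checking in each case that the relevant one-sided limit carries the recorded sign — with the ``otherwise'' clause absorbing exactly the finite-limit configurations where no sign can be read off — is the crux of the argument.
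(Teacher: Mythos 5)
Your proof is correct and follows essentially the same route as the paper's: drop the constant-sign tails outside \(I\), bound the sign pattern by that of the derivative prepended with the boundary sign (Propositions~\ref{prop:signdiff} and~\ref{prop:signdiffadd}), pass to logarithms, differentiate again, and clear denominators with the positive weight \(x(1-x)\ell(x)(1-\ell(x))\). Your reading of \(p_{1}\) as the actual derivative \(f(x) - c\,g(\ell(x))\) (rather than the literal \(f(x) - g(\ell(x))\) printed in the statement) is the right one --- the constant \(C\) and the later applications of the lemma confirm that the factor \(c\) belongs there --- and your explicit identification of \(\sigma_{1}\) and \(\sigma_{2}\) as the one-sided boundary signs at the left endpoint of \(I\) supplies exactly the detail the paper leaves implicit.
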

\begin{proof}
  Write \([0, 1] = J \cup I \cup J'\) where \(J = [0, \max(0, -d/c)]\) and \(J'
  = [\min(1, (1-d)/c), 1]\). Then \(S(x \in [0, 1] \mapsto F(x) - G(\ell(x))) =
  S(x \in J \mapsto F(x) - G(\ell(x))) \cdot S(x \in I \mapsto F(x) -
  G(\ell(x))) \cdot S(x \in J' \mapsto F(x) - G(\ell(x)))\). By construction the
  first and third terms are just a single sign that coincides with the first and
  final sign of \(S(x \in I \mapsto F(x) - G(\ell(x)))\) and can hence be
  dropped. This proves \eqref{eq:mainlemma1}.

  Assertion \eqref{eq:mainlemma2} is now immediate from
  Propositions~\ref{prop:signdiff}~and~\ref{prop:signdiffadd} and
  \eqref{eq:mainlemma3} follows by taking logarithms of both terms.

  Moreover, \eqref{eq:mainlemma4} follows by another application of
  Propositions~\ref{prop:signdiff}~and~\ref{prop:signdiffadd} and
  \eqref{eq:mainlemma5} follows by multiplication with
  \(x(1-x)\ell(x)(1-\ell(x))\) which is positive for \(x \in I\) by definition.
\end{proof}

\subsection{Stochastic dominance ordering}
\label{subsec:proofsst}

Before actually proving Theorem~\ref{thm:betast}, we shall prove that being ordered according to the stochastic dominance order
 is a \emph{necessary} condition for ordering compactly
supported distributions with respect to the star-shape transform or the convex
transform orders. Although the result concerning the stochastic dominance order is
well established, we present a proof using sign patterns.

A first result concerns a simple relation between the star-shaped transform
ordering and the stochastic dominance order.
\begin{proposition}\label{prop:cimpliesst}
  Let \(X \sim P\) and \(Y \sim Q\) be random variables with distributions \(P\)
  and \(Q\) supported on \([0, 1]\). Then \(X \IFRAleq Y\) implies \(Y \stleq
  X\).
\end{proposition}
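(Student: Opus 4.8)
The plan is to unwind the two definitions involved and reduce everything to a single elementary property of star-shaped functions. By Definition~\ref{def:starorder}, the hypothesis \(X \IFRAleq Y\) says exactly that the map \(h(x) = G^{-1}(F(x))\) is star-shaped on \([0,1]\), where \(F\) and \(G\) are the (strictly increasing, continuous) distribution functions of \(P\) and \(Q\). On the other hand, the desired conclusion \(Y \stleq X\) is, by the definition of stochastic dominance, just the pointwise inequality \(G(x) \geq F(x)\) for all \(x\). So it suffices to prove \(h(x) \leq x\) for every \(x \in [0,1]\): applying the increasing function \(G\) to \(G^{-1}(F(x)) = h(x) \leq x\) then yields \(F(x) \leq G(x)\) on \([0,1]\).

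To obtain \(h(x) \leq x\), I would simply evaluate the defining inequality of star-shapedness, \(h(\alpha x) \leq \alpha h(x)\) for \(\alpha \in [0,1]\), at the right endpoint \(x = 1\). This needs the value \(h(1) = 1\), and this is the one point that requires care: it is exactly where the hypothesis that both distributions are supported on \([0,1]\) is used, since then \(F(1) = 1\) and \(G(1) = 1\), whence \(G^{-1}(1) = 1\) and \(h(1) = G^{-1}(F(1)) = G^{-1}(1) = 1\). With this in hand, star-shapedness at \(x = 1\) gives \(h(\alpha) = h(\alpha \cdot 1) \leq \alpha\, h(1) = \alpha\) for every \(\alpha \in [0,1]\), which is the required bound. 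Equivalently, one may invoke the fact recalled in Section~\ref{sec:prel} that \(h\) star-shaped means \(h(x)/x\) is increasing on \((0,1]\), together with \(h(1)/1 = 1\).

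It then remains only to note that \(F(x) \leq G(x)\) also holds trivially for \(x \notin [0,1]\), since both distribution functions equal \(0\) to the left of \(0\) and \(1\) to the right of \(1\); hence \(G(x) \geq F(x)\) for all \(x \in \R\), i.e.\ \(Y \stleq X\). There is no serious obstacle beyond this bookkeeping at the endpoint: the entire content of the statement is the single observation that a star-shaped function on \([0,1]\) lies below the ray from the origin through the endpoint of its graph, which here is the line \(y = x\).
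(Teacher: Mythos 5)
Your proof is correct and follows essentially the same route as the paper: the paper uses the equivalent fact that \(G^{-1}(F(x))/x\) is increasing and hence bounded by its value \(1\) at \(x=1\), which is exactly the observation you make (in the form \(h(\alpha)\le\alpha h(1)=\alpha\)) before applying \(G\) to both sides. The endpoint computation \(h(1)=1\) and the extension outside \([0,1]\) are handled correctly.
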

\begin{proof}
  Let \(F\) and \(G\) be the distribution functions of \(X\) and \(Y\),
  respectively. As \(G^{-1}(F(x))/x\) is increasing, it follows that
  \(G^{-1}(F(x))/x \leq G^{-1}(F(1)) = 1\), thus \(G^{-1}(F(x)) \leq x\) and
  \(G(x) \geq F(x)\), meaning \(Y \stleq X\).
\end{proof}
Since the convex transform order implies the star-shape transform order, the
following is immediate.
\begin{corollary}\label{cor:cimpliesst}
  Let \(X \sim P\) and \(Y \sim Q\) be random variables with distributions \(P\)
  and \(Q\) supported on \([0, 1]\). Then \(X \cleq Y\) implies \(Y \stleq X\).
\end{corollary}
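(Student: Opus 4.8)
The plan is to deduce the corollary from Proposition~\ref{prop:cimpliesst} by first strengthening the hypothesis \(X \cleq Y\) to \(X \IFRAleq Y\) under the standing assumption that both distributions are supported on \([0,1]\). The key observation to invoke is the one recorded in Section~\ref{sec:prel}: a convex function on an initial segment of the non-negative half-line that is non-positive at the origin is automatically star-shaped. So the only thing I would need to check is that the transform \(x \mapsto G^{-1}(F(x))\), whose convexity is by definition the statement \(X \cleq Y\), vanishes at \(x = 0\).

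For this, I would let \(F\) and \(G\) denote the (strictly increasing) distribution functions of \(X\) and \(Y\). Because \(P\) and \(Q\) are supported on \([0,1]\), one has \(F(0) = G(0) = 0\), so that \(G^{-1}(F(0)) = G^{-1}(0) = 0\). Hence \(x \mapsto G^{-1}(F(x))\) is convex on \([0,1]\) and takes the value \(0\) at the left end-point, and is therefore star-shaped; in other words \(X \IFRAleq Y\). Proposition~\ref{prop:cimpliesst} then immediately yields \(Y \stleq X\).

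I do not expect a genuine obstacle here: the argument is essentially a one-line reduction. The single point worth some care is that the two distributions share the left end-point \(0\) of their supports, which is exactly what "supported on \([0,1]\)" supplies and what forces \(G^{-1}(F(0)) = 0\); this is the only place the compact-support hypothesis enters, and it is indispensable, since for distributions on more general intervals the convex transform order need not imply the star-shaped order.
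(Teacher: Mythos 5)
Your proposal is correct and follows essentially the same route as the paper: the paper's proof is the one-line remark that the convex transform order implies the star-shaped order (for distributions with left end-point \(0\)), followed by an appeal to Proposition~\ref{prop:cimpliesst}. You have merely made explicit the detail the paper leaves implicit, namely that \(G^{-1}(F(0)) = 0\) so the convex transform is star-shaped by the observation in Section~\ref{sec:prel}.
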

In the above statement the use of the unit interval is for notational
convenience. Using invariance under orientation-preserving affine
transformations the statement generalises to distributions on any bounded
interval.

Using the above we may now establish \emph{necessary}
conditions for one Beta distribution to be smaller than another according to
convex- or star-shaped transform orders. We do this by characterising when one
is smaller than the other according to stochastic dominance.

A proof of Theorem~\ref{thm:betast} can be found by elementary means, but since it illustrates well
the style of the upcoming proofs we formulate it in terms of an analysis of
sign patterns.
\begin{proof}[Proof of Theorem~\ref{thm:betast}]
  Let \(F\), \(G\), \(f\), and \(g\) be the distribution and density functions
  of \(\pbeta(a,b)\) and \(\pbeta(a', b')\). Denote \(H(x) = F(x) - G(x)\). We
  need to prove that \(S(H) = \sminus\) if and only if \(a \geq a'\) and \(b
  \leq b'\). We have
  \begin{equation*}
    \begin{split}
      S(H'(x))
      &=
      S\left(\frac{x^{a'-1}(1-x)^{b'-1}}{\betafun(a,b)}
        \left(
          x^{a-a'}(1-x)^{b-b'} - \frac{\betafun(a,b)}{\betafun(a',b')}
        \right)\right)
      \\&=
      S\left(
        x^{a-a'}(1-x)^{b-b'} - \frac{\betafun(a,b)}{\betafun(a',b')}
      \right).
    \end{split}
  \end{equation*}
  Since the case \(a = a'\) and \(b = b'\) is trivial, we may assume \(H\) is
  not constant \(0\) and so, since \(H(0) = H(1) = 0\), that neither \(S(H') =
  \splus\) nor \(S(H') = \sminus\).

  If \(a \geq a'\) and \(b \leq b'\), with at least one strict, we have \(S(H')
  \leq \sminus\splus\) since \(x^{a-a'}(1-x)^{b-b'}\) is increasing. Only
  \(S(H') = \sminus\splus\) is possible so
  Propositions~\ref{prop:signdiff}~and~\ref{prop:signdiffadd} imply \(\sminus
  \dotsb \sminus = S(H) \leq \sminus\splus\) with \(S(H) = \sminus\) the only
  option.

  Assume now that \(b > b'\) and \(a > a'\). Clearly \(S(H') = \sminus \dotsb
  \sminus\) and since \(x^{a-a'}(1-x)^{b-b'}\) is unimodal either \(S(H') =
  \sminus\) or \(S(H') = \sminus\splus\sminus\). Only \(S(H') =
  \sminus\splus\sminus\) is possible, so Proposition~\ref{prop:signdiffadd}
  implies that \(S(H) = \sminus \dotsb \splus \neq \sminus\).

  Using that \(\pbeta(a, b) \stleq \pbeta(a', b')\) if and only if \(\pbeta(b',
  a') \stleq \pbeta(a, b)\) and that \(\stleq\) is a partial order covers the
  remaining cases.
\end{proof}

\subsection{Star-shape ordering}
\label{subsec:proofsstar}

We now prove Theorem~\ref{thm:betaifra}, showing that, apart from reversing the order
direction, we find the same parameter characterisations as for the stochastic
dominance.

\begin{proof}[Proof of Theorem~\ref{thm:betaifra}]
  The necessity follows from Proposition~\ref{prop:cimpliesst} and
  Theorem~\ref{thm:betast}. As for the sufficiency, it is enough to prove the
  statement when \(a > a'\), \(b = b'\) and when \(a = a'\), \(b < b'\). The
  general statement then follows by transitivity since then \(\pbeta(a, b)
  \IFRAleq \pbeta(a, b') \IFRAleq \pbeta(a', b')\). Moreover, we may assume that
  either \(b \leq b' \leq 1\) or \(1 \leq b \leq b'\), since the remaining case,
  \(b \leq 1 \leq b'\), follows again by transitivity.

  Let \(F\) and \(G\) be the distribution functions of \(\pbeta(a, b)\) and
  \(\pbeta(a', b')\), respectively, with \(f\) and \(g\) the corresponding
  density functions as in \eqref{eq:dbetas}. By Proposition~\ref{prop:ifrasign}
  we need to prove that for every \(c \in \R\)
  \begin{equation}\label{eq:ifraeq}
    S(x \in [0, 1] \mapsto G^{-1}(F(x)) - cx) = S(F(x) - G(cx)) \leq \sminus\splus.
  \end{equation}
  As the assumptions on the parameters are the same as in
  Theorem~\ref{thm:betast}, it follows that \(G^{-1}(F(x)) \leq x\) and thus
  \eqref{eq:ifraeq} is satisfied when \(c \geq 1\). Moreover, both \(G^{-1}\)
  and \(F\) are increasing, so \eqref{eq:ifraeq} holds for \(c \leq 0\). It is
  therefore enough to consider \(c \in (0,1)\).

  The conclusion follows by analysing three different cases.
  \begin{description}
  \item[Case 1. \(b = b' \leq 1, \, a > a'\):]
    Using \eqref{eq:mainlemma2} from Lemma~\ref{lemma:mainlemma} with \(d = 0\) gives
    \begin{equation*}
      \begin{split}
        S(G^{-1}(F(x)) - cx)
        & \leq
        S\left(
          \frac{x^{a-1}(1-x)^{b-1}}{\betafun(a, b)}
          -
          \frac{c^{a'}x^{a'-1}(1-cx)^{b-1}}{\betafun(a', b)}
        \right)
        \\& =
        S
        \left(
          x^{a-a'}
          \left(
            \frac{1-cx}{1-x}
          \right)^{1-b}
          -
          c^{a'}\frac{\betafun(a, b)}{\betafun(a', b)}
        \right)
        \\& \leq
        \sminus\splus,
      \end{split}
    \end{equation*}
    since the last expression is increasing in \(x\) for \(x \in (0, 1)\), \(a \geq a'\), and \(b \leq 1\).

  \item[Case 2. \(b = b' \geq 1, \, a > a'\):]
    Applying Lemma~\ref{lemma:mainlemma} with \(d = 0\) we have \(c_{3} =
    (a-a')c^{2} > 0\), \(c_{2} = -(b-1)c(1-c) - (a-a')c(1+c)\), \(c_{1} =
    (a-a')c > 0\), \(c_{0} = 0\), \(\sigma_{1} = \snull\), and \(\sigma_{2} =
    \splus\), meaning \eqref{eq:mainlemma5} gives
    \begin{equation*}
      S(F(x)-G(cx))
      \leq
      \splus \cdot S(c_{3}x^{3} + c_{2}x^{2} + c_{1}x)
      =
      \splus \cdot S(c_{3}x^{2} + c_{2}x + c_{1}).
    \end{equation*}
    Since \(c_{3} > 0\) we have \(S(x \in [0, 1] \mapsto c_{3}x^{2} + c_{2}x +
    c_{1}) \leq \splus\sminus\splus\). But \(c_{1} > 0\) and \(c_{3} + c_{2} +
    c_{1} = -(b-1)c(1-c) < 0\) meaning we must have \(S(x \in [0, 1] \mapsto
    c_{2}x^{2} + c_{1}x + c_{0} ) = \splus\sminus\). Hence \(S(F(x) - G(cx))
    \leq \sminus\splus\sminus\). But since \(F(1) - G(c) = 1 - G(c) > 0\) we
    have \(S(F(x) - G(cx)) \leq \sminus\splus\).

  \item[Case 3. \(a = a', \, b < b'\):]
    Applying Lemma~\ref{lemma:mainlemma} with \(d = 0\) we have \(c_{3} =
    -(b'-b)c^{2} < 0\), \(c_{2} = (1-b)c - (1-b')c^{2}\), \(c_{1} = 0\), \(c_{0}
    = 0\), \(\sigma_{1} = \snull\), and \(\sigma_{2} \in \{\snull, \sminus,
    \splus\}\), meaning \eqref{eq:mainlemma5} gives
    \begin{equation*}
      S(F(x)-G(cx))
      \leq
      \sigma_{2} \cdot S(c_{3}x^{3} + c_{2}x^{2})
      =
      \sigma_{2} \cdot S(c_{3}x + c_{2})
      \leq
      \sigma_{2} \cdot \splus\sminus.
    \end{equation*}
    In any case \(S(F(x)-G(cx)) \leq \sminus\splus\sminus\) no matter the value
    of \(\sigma_{2}\). But \(F(1) - G(c) > 0\), so we must have \(S(F(x)-G(cx))
    \leq \sminus\splus\).
  \end{description}
  This concludes the proof.
\end{proof}
As will become apparent in the next section, this characterisation of the
star-shape transform ordering is an essential first step towards proving the
corresponding statement for the convex transform order.

\subsection{Convex transform ordering}
\label{subsec:proofsconvex}

To characterise how the Beta distributions are ordered according to the convex
transform order we will apply a strategy similar to the one used in previous
sections. According to Proposition~\ref{prop:ifrasign} we need to prove that for
\(a \geq a'\) and \(b \leq b'\) the distribution functions \(F\) and \(G\) of
\(\pbeta(a, b)\) and \(\pbeta(a', b')\), respectively, satisfy
\begin{equation}\label{eq:mainconvex}
  S(x \in [0, 1] \mapsto F(x) - G(\ell(x))) \leq \splus\sminus\splus,
\end{equation}
for every affine function \(\ell\).

First we need an auxiliary result, generalising Theorem~6.1~in~\textcite{AHO20},
which corresponds to taking \(x_0 = y_0 = 0 = \inf I\) in the statement below.
\begin{proposition}\label{prop:ifratrick}
  Let \(f \colon I \mapsto \R\) where \(I\) is an interval. If for some \(x_{0}
  \leq \inf I\) and \(y_0\) it holds that \(S(x \in I \mapsto f(x) - \ell(x))
  \leq \sminus\splus\) for all affine functions \(\ell\) such that \(\ell(x_{0})
  = y_0\) then \(S(x \in I \mapsto f(x) - \tilde{\ell}(x)) \leq \sminus\splus\)
  for all affine functions \(\tilde{\ell}\) such that \(\tilde{\ell}(x_{0}) \geq
  y_0\).

  The analogous conclusion holds considering the sign pattern \(\splus\sminus\)
  and taking \(x_{0} \geq \sup I\) satisfying \(\tilde{\ell}(x_{0}) \leq y_0\).
\end{proposition}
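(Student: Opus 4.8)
The plan is to reduce the case of a general affine function $\tilde\ell$ with $\tilde\ell(x_0)\ge y_0$ to the hypothesised case of an affine function $\ell$ with $\ell(x_0)=y_0$, by exploiting the freedom in the slope of the comparison line. First I would fix $\tilde\ell(x)=\tilde c\,(x-x_0)+\tilde y_0$ with $\tilde y_0=\tilde\ell(x_0)\ge y_0$, and consider the auxiliary affine function $\ell(x)=\tilde c\,(x-x_0)+y_0$, which has the same slope but passes through $(x_0,y_0)$; by hypothesis $S(x\in I\mapsto f(x)-\ell(x))\le\sminus\splus$. Since $x_0\le\inf I$, on the whole interval $I$ we have $x-x_0\ge 0$, hence $\tilde\ell(x)=\ell(x)+(\tilde y_0-y_0)\ge\ell(x)$, so $f(x)-\tilde\ell(x)\le f(x)-\ell(x)$ pointwise on $I$. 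This monotone shift of one of the two terms is exactly the situation handled by the sign-pattern machinery in Appendix~\ref{sec:overview}: subtracting a nonnegative constant from a function whose sign pattern is at most $\sminus\splus$ can only ``push'' the pattern further towards $\sminus$, so $S(x\in I\mapsto f(x)-\tilde\ell(x))\le\sminus\splus$ as well. (Concretely, if $f(x)-\ell(x)$ is first $\le 0$ then $\ge 0$, then $f(x)-\tilde\ell(x)$, being everywhere no larger, is also first $\le 0$ then $\ge 0$, possibly with the sign-change point moved to the right or the positive part eliminated entirely.)

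The one subtlety is that the hypothesis is quantified over \emph{all} affine $\ell$ with $\ell(x_0)=y_0$, including every possible slope, whereas for a given $\tilde\ell$ I only need the single auxiliary line $\ell$ sharing $\tilde\ell$'s slope. So the reduction uses the hypothesis only for that one slope, which is certainly allowed. Nothing forces $\tilde c$ to be positive or the line to intersect $I$ in any particular way — the argument that $\tilde\ell\ge\ell$ on $I$ uses only $x-x_0\ge 0$ on $I$ and $\tilde y_0\ge y_0$, independent of the sign of the slope.

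For the analogous statement with pattern $\splus\sminus$ and $x_0\ge\sup I$, the symmetric argument applies: now $x-x_0\le 0$ on $I$, so an affine function $\tilde\ell$ with $\tilde\ell(x_0)\le y_0$ satisfies $\tilde\ell(x)\ge\ell(x)$ on $I$ for the auxiliary line $\ell$ of the same slope through $(x_0,y_0)$, and again $f-\tilde\ell\le f-\ell$ on $I$; a function whose sign pattern is at most $\splus\sminus$ remains at most $\splus\sminus$ after subtracting a nonnegative quantity, since the initial $\splus$ block can only shrink and the terminal $\sminus$ block can only grow.

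I expect the main (and essentially only) obstacle to be bookkeeping: making the pointwise-domination step precise in the language of the sign-pattern operators $S(\cdot)$ and the ordering $\le$ on $\mathcal S$, i.e.\ citing the right lemma from Appendix~\ref{sec:overview} to the effect that replacing $g$ by $g-\varepsilon$ for a constant $\varepsilon\ge 0$ satisfies $S(g-\varepsilon)\le S(g)$ whenever $S(g)\le\sminus\splus$ (and the mirror statement for $\splus\sminus$). Everything else — choosing the auxiliary line, checking $x-x_0\ge 0$ on $I$ — is immediate from the hypotheses $x_0\le\inf I$ and $\tilde\ell(x_0)\ge y_0$.
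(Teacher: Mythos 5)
There is a genuine gap at the central step. You reduce to the auxiliary line \(\ell\) with the same slope as \(\tilde{\ell}\) passing through \((x_{0}, y_{0})\), so that \(f - \tilde{\ell} = (f - \ell) - \varepsilon\) with \(\varepsilon = \tilde{\ell}(x_{0}) - y_{0} \geq 0\) constant, and then claim that \(S(h) \leq \sminus\splus\) implies \(S(h - \varepsilon) \leq \sminus\splus\). That implication is false. The condition \(S(h) \leq \sminus\splus\) only says that \(h\) never passes from a strictly positive value to a strictly negative value at a later point; it does not say that \(h\) stays above the level \(\varepsilon\) on its positive part. Concretely, if \(h\) takes the values \(-1, 2, \tfrac{1}{2}, 2\) at four increasing points of \(I\) (interpolated continuously, negative only near the first point), then \(S(h) = \sminus\splus\), yet \(h - 1\) takes the values \(-2, 1, -\tfrac{1}{2}, 1\) and so \(\sminus\splus\sminus\splus \leq S(h-1)\). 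Your parenthetical justification (``being everywhere no larger, is also first \(\le 0\) then \(\ge 0\)'') uses pointwise domination in the wrong direction: \(f - \tilde{\ell} \leq f - \ell\) preserves the initial \(\sminus\) block but gives no control over the terminal \(\splus\) block. This is not a bookkeeping issue to be discharged by a lemma from the appendix; the proposition genuinely requires the hypothesis for more than one slope, and the single-slope version you actually invoke does not imply the conclusion (the \(h\) above satisfies the hypothesis for the one line of slope \(0\) but violates the conclusion for its upward shift).

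The repair --- and this is what the paper's Figure~\ref{fig:ifratrick} depicts --- is to choose the slope of the auxiliary line according to where a forbidden \(\splus\sminus\) pattern would occur. Argue by contraposition: if \(S(x \in I \mapsto f(x) - \tilde{\ell}(x)) \not\leq \sminus\splus\), there exist \(u < v\) in \(I\) with \(f(u) > \tilde{\ell}(u)\) and \(f(v) < \tilde{\ell}(v)\). Let \(\ell\) be the affine function through \((x_{0}, y_{0})\) and \((v, \tilde{\ell}(v))\); since \(x_{0} \leq \inf I \leq u < v\) and \(\ell(x_{0}) = y_{0} \leq \tilde{\ell}(x_{0})\) while \(\ell(v) = \tilde{\ell}(v)\), the line \(\ell\) has slope at least that of \(\tilde{\ell}\) and hence \(\ell \leq \tilde{\ell}\) on \([x_{0}, v]\). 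Therefore \(f(u) - \ell(u) \geq f(u) - \tilde{\ell}(u) > 0\) and \(f(v) - \ell(v) = f(v) - \tilde{\ell}(v) < 0\), so \(\splus\sminus \leq S(x \in I \mapsto f(x) - \ell(x))\), contradicting the hypothesis. The mirror statement for \(\splus\sminus\) and \(x_{0} \geq \sup I\) follows symmetrically. Note how this argument pivots the comparison line about a point determined by \(f\) and \(\tilde{\ell}\), which is exactly the freedom your fixed-slope reduction discards.
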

\begin{proof}[Proof (sketch)]
 The main idea is given graphically in Figure~\ref{fig:ifratrick} below where \(\ell\) is as in the statement and \(\tilde{\ell}\) is the line given by \(\tilde{\ell}(x_{0}) = y_{0}\) and \(\tilde{\ell}(x_{1}) = \ell(x_{1})\) for \(x_{1}\) the first time the graphs of \(f\) and \(\ell\) intersect, if it exists, and arbitrary otherwise.
\begin{figure}[h]
 \centering
  \includegraphics{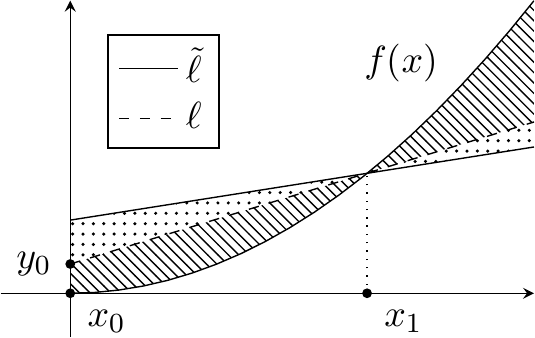}
  %\begin{tikzpicture}
  %\begin{axis}[
  %width = 7cm,height = 5cm,
  %xmin = -0.15,xmax = 1,
  %ymin = -0.15,ymax = 1,
  %ticks = none,
  %axis lines = center,
  %legend style = {at = {(0.2,0.9)}, anchor = north west}]
  %\addplot[samples = 20,domain = 0:1,name path = f] {x^2} node [pos=0.75,above left] {f(x)};
  %\addlegendentry{\(\tilde{\ell}\)}
  %\addplot [mark = none,style = dashed, name path = l1] 
%coordinates {(0,0.1) (1,0.58541)};
  %\addlegendentry{\(\ell\)}
 % \addplot [mark = none, name path = l2] coordinates {(0,0.25) 
%(1,0.5)};
  %\addplot [mark = none, style=dotted] coordinates {(0.64,0) (0.64,0.4096)};
 % \node [circle,fill,inner sep = 1pt,label = {180:{\(y_0\)}}] at 
%(axis cs:0,0.1) {};
 % \node [circle,fill,inner sep = 1pt,label = {-45:{\(x_{0}\)}}] 
%at (axis cs:0,0) {};
 % \node [circle,fill,inner sep = 1pt,label = {-45:{\(x_{1}\)}}] 
%at (axis cs:0.64,0) {};
%  \addplot[pattern = dots] fill between[of = l2 and l1];
%  \addplot[pattern = north west lines] fill between[of = l1 and f];
%  \end{axis}
%  \end{tikzpicture}
%% NOTE: wiley-article.cls appears to have problems with \ref in caption
\caption{Main idea of proof of Proposition~17}
\label{fig:ifratrick}
\end{figure}
\end{proof}

The above statement may be combined with the characterisation of how Beta
distributions are ordered according to the star-shaped transform order that was
established in the previous section. Doing so allows us to immediately take care
of a number of affine \(\ell\) in \eqref{eq:mainconvex}.

\begin{corollary}\label{corr:triviallines}
  Let \(F\) and \(G\) be the distribution functions of \(\pbeta(a, b)\) and
  \(\pbeta(a', b')\), respectively, and assume that \(a \geq a'\) and \(b \leq
  b'\). If \(\ell\) is decreasing or satisfies either \(\ell(0) \geq 0\) or
  \(\ell(1) \not \in (0, 1)\) then \eqref{eq:mainconvex} is satisfied.
\end{corollary}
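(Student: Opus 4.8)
The plan is to split the affine functions \(\ell\) permitted in the statement into a few groups and dispatch each by combining the star-shape characterisation of Theorem~\ref{thm:betaifra} (in the sign-pattern form of Proposition~\ref{prop:ifrasign}) with Proposition~\ref{prop:ifratrick}. Since \(a \geq a'\) and \(b \leq b'\), Theorem~\ref{thm:betaifra} gives \(\pbeta(a,b) \IFRAleq \pbeta(a',b')\), so \(h = G^{-1}\circ F\) is star-shaped and \(S(x \mapsto h(x) - cx) \leq \sminus\splus\) for every \(c\). Two observations will be used repeatedly: on the set \(I = \{x \in [0,1] \mid 0 < \ell(x) < 1\}\) the sign of \(F(x) - G(\ell(x))\) coincides with that of \(h(x) - \ell(x)\), because \(G\) is strictly increasing on \((0,1)\); and \(S(x \in [0,1] \mapsto F(x) - G(\ell(x))) = S(x \in I \mapsto F(x) - G(\ell(x)))\) by \eqref{eq:mainlemma1} (the cases where \(\ell\) takes no value in \((0,1)\) being immediate since then \(G \circ \ell\) is constant).

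First, if \(\ell(0) \geq 0\), apply Proposition~\ref{prop:ifratrick} to \(f = h\) on \([0,1]\) with pivot \(x_0 = 0 = \inf[0,1]\) and \(y_0 = 0\); its hypothesis is exactly star-shapedness of \(h\), so the conclusion yields \(S(x \in [0,1] \mapsto h(x) - \ell(x)) \leq \sminus\splus\), and hence, via the two observations, \(S(x \in [0,1] \mapsto F(x) - G(\ell(x))) \leq \sminus\splus \leq \splus\sminus\splus\). Second, if \(\ell\) is non-increasing then \(G(\ell(x))\) is non-increasing, so \(F(x) - G(\ell(x))\) is non-decreasing and again \(S \leq \sminus\splus\); and if \(\ell\) is increasing with \(\ell(1) \leq 0\) then \(\ell \leq 0\) throughout \([0,1]\), so \(G(\ell(x)) \equiv 0\) and \(F(x) - G(\ell(x)) = F(x)\) has \(S \leq \splus\). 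This covers every case except the one in which \(\ell\) is increasing with \(\ell(1) \geq 1\) and \(\ell(0) < 0\), which I expect to be the main obstacle.

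For that remaining case, write \(\ell(x) = cx + d\) with \(c > 0\), \(d < 0\), \(c + d \geq 1\), and reflect through the centre of the unit square: let \(\bar F(y) = 1 - F(1-y)\) and \(\bar G(y) = 1 - G(1-y)\), which are the distribution functions of \(\pbeta(b, a)\) and \(\pbeta(b', a')\), and \(\bar\ell(y) = cy + (1-c-d)\), again increasing. The substitution \(x = 1-y\) gives \(F(x) - G(\ell(x)) = -\bigl(\bar F(y) - \bar G(\bar\ell(y))\bigr)\), so it suffices to show \(S(y \in [0,1] \mapsto \bar F(y) - \bar G(\bar\ell(y))) \leq \splus\sminus\). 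Now, since \(b' \geq b\) and \(a' \leq a\), Theorem~\ref{thm:betaifra} yields \(\pbeta(b', a') \IFRAleq \pbeta(b, a)\), i.e.\ the \emph{inverse} transform \(\bar h^{-1} = \bar F^{-1}\circ\bar G\) is star-shaped. On \(\{y \mid 0 < \bar\ell(y) < 1\}\) the sign of \(\bar F(y) - \bar G(\bar\ell(y))\) equals, by monotonicity of \(\bar G\), \(\bar\ell\) and \(\bar h\), the sign of \(y - \bar h^{-1}(\bar\ell(y))\); substituting \(z = \bar\ell(y)\), this is minus the sign of \(\bar h^{-1}(z) - \bar\ell^{-1}(z)\), where the affine map \(\bar\ell^{-1}(z) = z/c + (c+d-1)/c\) satisfies \(\bar\ell^{-1}(0) = (c+d-1)/c \geq 0\) because \(c + d \geq 1\) and \(c > 0\). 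Hence Proposition~\ref{prop:ifratrick} applied to \(\bar h^{-1}\) with pivot \(x_0 = 0\), \(y_0 = 0\) gives \(S(z \mapsto \bar h^{-1}(z) - \bar\ell^{-1}(z)) \leq \sminus\splus\); unwinding the substitutions and invoking \eqref{eq:mainlemma1} for the data \((\bar F, \bar G, \bar\ell)\) then delivers \(S(y \in [0,1] \mapsto \bar F(y) - \bar G(\bar\ell(y))) \leq \splus\sminus\), as required. The genuinely delicate point is thus this reflection device: the remaining configuration is exactly the one in which \(\ell\) straddles the square from below the origin to above \((1,1)\), so that neither a monotonicity argument nor Proposition~\ref{prop:ifratrick} applied directly to \(h\) helps; reflecting moves the natural "pivot" from the left end-point, where star-shapedness of \(h\) lives, to the right end-point, where the property one needs is star-shapedness of \(\bar h^{-1}\), and Theorem~\ref{thm:betaifra} supplies it precisely because reflection interchanges the two Beta parameters.
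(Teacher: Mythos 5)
Your proof is correct and follows essentially the same route as the paper: the same case split (non-increasing \(\ell\); \(\ell(0)\geq 0\) via star-shapedness of \(G^{-1}\circ F\) and Proposition~\ref{prop:ifratrick}; \(\ell(1)\leq 0\) trivially; and the straddling case via reflection through the centre of the unit square, which swaps the Beta parameters and lets Theorem~\ref{thm:betaifra} supply the needed star-shape order). The only cosmetic difference is that in the last case the paper first invokes the \(\splus\sminus\) half of Proposition~\ref{prop:ifratrick} to reduce to lines through \((1,1)\) and then reflects, whereas you reflect first and apply the \(\sminus\splus\) half to the inverse transform \(\bar h^{-1}\) --- a mirror image of the same argument.
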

\begin{proof}
  We have that \eqref{eq:mainconvex} holds when \(\ell\) is non-increasing, since otherwise \(F(x) - G(\ell(x))\) is non-decreasing and so \(S(F(x) - G(\ell(x))) \leq \sminus\splus\). Therefore assume \(\ell\) is increasing and consider three different cases.
  \begin{description}
  \item[Case 1. \(\ell(0) \geq 0\):] According to Theorem~\ref{thm:betaifra} and
    Proposition~\ref{prop:ifrasign}, we have that, for any \(c \in \R\),
    \(S(F(x) - G(cx)) \leq \sminus\splus\). Taking into account
    Proposition~\ref{prop:ifratrick}, this implies \(S(F(x) - G(\ell(x))) \leq
    \sminus\splus \leq \splus\sminus\splus\).

  \item[Case 2. \(\ell(1) \leq 0\):] In this case \(\ell\) is always negative,
    and the result is immediate.

  \item[Case 3. \(\ell(1) \geq 1\):] It is enough to prove that \(S(F(x) -
    G(c(x-1)+1)) \leq \splus\sminus\) for every \(c \in \R\). Indeed, once this
    proved, the conclusion follows using Proposition~\ref{prop:ifratrick} again.

    Note that \(F_-(x) = 1-F(1-x)\) is the distribution function of \(\pbeta(b,
    a)\) and \(G_-(x) = 1-G(1-x)\) is the distribution function of \(\pbeta(b',
    a')\). The characterisation of star-shape transform order proved in
    Theorem~\ref{thm:betaifra} together with Proposition~\ref{prop:ifrasign},
    means that \(S(1-G(1-x)-1+F(1-c'x)) \leq \sminus\splus\), for every \(c' \in
    \R\). For any \(c \in \R\) we may apply this to \(c' = 1/c\), which gives
    \begin{equation*}
      \begin{split}
        S(F(x) - G(c(x-1)+1)) & = S(1 - G(c(x-1)+1) - 1 + F(x)) \\
        & = \srev S(1 - G(1-x) - 1 + F(1-x/c)) \\
        & \leq \srev(\sminus\splus) = \splus\sminus.%\qedhere
      \end{split}
    \end{equation*}
  \end{description}
\end{proof}

The proof of Theorem~\ref{thm:mainifr}, establishing the convex transform
ordering within the Beta family is achieved through the analysis of several
partial cases. For improved readability we will be presenting these in several
lemmas.

\begin{lemma}\label{lemma:bfixoneabig}
  Let \(X \sim \pbeta(a, b)\) and \(Y \sim \pbeta(1, b)\) for some \(a \geq 1\) and
  \(b > 0\). Then \(X \cleq Y\).
\end{lemma}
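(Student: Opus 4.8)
The plan is to apply the criterion of Proposition~\ref{prop:ifrasign}: writing \(F\) and \(G\) for the distribution functions of \(\pbeta(a,b)\) and \(\pbeta(1,b)\), the claim \(X \cleq Y\) amounts to \(S(x \in [0,1] \mapsto F(x) - G(\ell(x))) \leq \splus\sminus\splus\) for every affine \(\ell\). Since \(a \geq 1\), Corollary~\ref{corr:triviallines}, applied with \(a' = 1\) and \(b' = b\), already handles every \(\ell\) that is non-increasing or satisfies \(\ell(0) \geq 0\) or \(\ell(1) \notin (0,1)\), so only \(\ell(x) = cx + d\) with \(c > 0\), \(d < 0\) and \(0 < c + d < 1\) needs attention. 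For such an \(\ell\) the interval \(I\) of Lemma~\ref{lemma:mainlemma} is \((-d/c, 1)\), on which \(0 < x < 1\), \(0 < \ell(x) < 1\) and \(1 - c - d > 0\).

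I would then invoke Lemma~\ref{lemma:mainlemma} with \(a' = 1\) and \(b' = b\). Here \(\sigma_{1} = \sign(-d) = \splus\), and since \(a' = 1\) the factor \(\sigma_{2}\) is one of \(\snull\), \(\sminus\), \(\splus\) — it will not matter which. By \eqref{eq:mainlemma4} it suffices to check that \(\sigma_{1}\sigma_{2} \cdot S(x \in I \mapsto p_{3}(x)) \leq \splus\sminus\splus\). With \(a' = 1\) the term \(c(a'-1)/\ell(x)\) disappears, and combining the remaining fractions via the identity \(c(1-x) - (1 - \ell(x)) = c + d - 1\) yields
\begin{equation*}
  p_{3}(x) = \frac{a-1}{x} - (b-1)\,\frac{1 - c - d}{(1-x)(1-\ell(x))}.
\end{equation*}
The essential observation is that \(p_{3}\) is monotone on \(I\). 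If \(b \leq 1\) both terms are non-negative, so \(S(p_{3}) \leq \splus\) and \(\sigma_{1}\sigma_{2} \cdot S(p_{3})\) is a single sign, hence \(\leq \splus\sminus\splus\). If \(b > 1\), then \(x \mapsto (1-x)(1-\ell(x)) = c x^{2} - (c + 1 - d)x + (1-d)\) is an upward parabola whose vertex \(\tfrac{c+1-d}{2c}\) exceeds \(1\) exactly because \(c + d < 1\); thus it is positive and strictly decreasing on \(I\), so \(1/\bigl((1-x)(1-\ell(x))\bigr)\) is strictly increasing there, and as \((b-1)(1-c-d) > 0\) the function \(p_{3}\) is a non-increasing function minus a strictly increasing one, hence strictly decreasing on \(I\). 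Therefore \(S(p_{3}) \leq \splus\sminus\), and \(\sigma_{1}\sigma_{2} \cdot S(p_{3}) = \sigma_{2}\cdot S(p_{3})\) is at most \(\splus\sminus\), \(\sminus\splus\) or \(\snull\) depending on \(\sigma_{2}\); in every case this is \(\leq \splus\sminus\splus\), which finishes the proof.

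The substantive steps are the simplification of \(p_{3}\) and recognising its monotonicity on \(I\); after that one merely reads off sign patterns, as in Case~3 of the proof of Theorem~\ref{thm:betaifra}, where an undetermined \(\sigma_{2}\) is dealt with by checking that each of its admissible values gives an acceptable pattern. The points needing care are all bookkeeping: that the reduction via Corollary~\ref{corr:triviallines} really leaves exactly the stated lines, that \(I = (-d/c, 1)\) with \(\ell\) and \(1-\ell\) positive on it, and that the hypothesis \(c + d < 1\) is precisely what pushes the parabola's vertex past \(1\) and so makes \((1-x)(1-\ell(x))\) decreasing on \(I\).
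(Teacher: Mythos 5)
Your reduction via Corollary~\ref{corr:triviallines}, the computation of \(I=(-d/c,1)\), the simplification of \(p_{3}\) for \(a'=1\), and the observation that \((1-x)(1-\ell(x))\) is positive and decreasing on \(I\) precisely because \(c+d<1\) are all correct; for \(b\leq 1\) your argument is essentially the paper's Case~2 and goes through (note only that \(\splus\cdot\sigma_{2}\cdot\splus\) equals \(\splus\sminus\splus\) when \(\sigma_{2}=\sminus\), not a single sign, but that is still \(\leq\splus\sminus\splus\)). The genuine gap is in the final sign-pattern bookkeeping for \(b>1\). You write \(\sigma_{1}\sigma_{2}\cdot S(p_{3})=\sigma_{2}\cdot S(p_{3})\) and list the possible outcomes as \(\splus\sminus\), \(\sminus\splus\) or \(\snull\); but \(\sigma_{1}=\splus\) is a genuine leading factor, and with \(S(p_{3})\leq\splus\sminus\) the three admissible values of \(\sigma_{2}\) give \(\splus\cdot\snull\cdot\splus\sminus=\splus\sminus\), \(\splus\cdot\splus\cdot\splus\sminus=\splus\sminus\), and \(\splus\cdot\sminus\cdot\splus\sminus=\splus\sminus\splus\sminus\). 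The last pattern is \emph{not} \(\leq\splus\sminus\splus\), and since \(a'=1\) with \(d<0\) puts you in the ``otherwise'' branch of the definition of \(\sigma_{2}\) (it is the initial sign of \(p_{2}\) near \(x=-d/c\), which can genuinely be negative), you cannot exclude \(\sigma_{2}=\sminus\). So as written the case \(b>1\) is not closed.

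The repair is the endpoint observation the paper itself uses at the end of its Case~3: \(F(1)-G(\ell(1))=1-G(c+d)>0\) because \(c+d<1\), so the sign pattern of \(x\mapsto F(x)-G(\ell(x))\) on \([0,1]\) must end in \(\splus\), and the only patterns \(\leq\splus\sminus\splus\sminus\) ending in \(\splus\) are \(\leq\splus\sminus\splus\). With that one line added your proof is complete; indeed your route for \(b>1\), which stops at \(p_{3}\) via \eqref{eq:mainlemma4} and uses its monotonicity, is somewhat cleaner than the paper's, which descends to the cubic \(p_{4}\) of \eqref{eq:mainlemma5}, factors out \(\ell(x)\), and analyses the remaining quadratic before invoking the same endpoint argument.
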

\begin{proof}
  Let \(F\) and \(G\) be the distribution functions of the \(\pbeta(a, b)\) and
  \(\pbeta(1, b)\) distributions, respectively, and \(f\) and \(g\) their
  densities. Taking into account Proposition~\ref{prop:ifrasign} and
  Corollary~\ref{corr:triviallines}, we need to show that, for every increasing affine
  function \(\ell(x) = cx + d\) satisfying \(\ell(0) = d < 0\) and \(\ell(1) =
  c+d \in (0, 1)\) one has that \eqref{eq:mainconvex} is satisfied. We need to
  separate the arguments into three cases.
  \begin{description}
  \item[Case 1. \(b = 1\):] In this case the statement follows directly from the
    convexity of \(F(x) = x^{a}\) and that \(G(x) = x\).

  \item[Case 2. \(b \in (0,1)\):]
    Applying Lemma~\ref{lemma:mainlemma} we have \(I = (-d/c, 1)\), \(\sigma_{1}
    = \splus\), and \(\sigma_{2} \in \{\snull, \sminus, \splus\}\), meaning
    \eqref{eq:mainlemma4} gives
    \begin{equation*}
      S(F(x)-G(\ell(x)))
      \leq
      \splus \cdot \sigma_{2} \cdot
      S\left(\frac{a-1}{x}-\frac{b-1}{1-x}+\frac{c(b-1)}{1-\ell(x)}\right).
    \end{equation*}
    But since for \(x \in I\)
    \begin{equation*}
      \frac{a-1}{x}-\frac{b-1}{1-x}+\frac{c(b-1)}{1-
        \ell(x)}
      =
      \frac{a-1}{x} +
      \frac{(1-b)(1-(c+d))}{(1-x)(1- \ell(x))} > 0,
    \end{equation*}
    we have \(S(F(x) - G(\ell(x))) \leq \splus \cdot \sigma_{2} \cdot \splus
    \leq \splus\sminus\splus\) no matter the value of \(\sigma_{2}\).

  \item[Case 3. \(b > 1\):]
    Applying Lemma~\ref{lemma:mainlemma} we have \(\sigma_{1} = \splus\) and
    \(\sigma_{2} \in \{\snull, \sminus, \splus\}\), meaning
    \eqref{eq:mainlemma5} gives
    \begin{equation*}
      \begin{split}
        S(F(x)-G(\ell(x)))
        &\leq
        \splus \cdot \sigma_{2} \cdot S(c_{3}x^{3} + c_{2}x^{2} + c_{1}x + c_{0})
        \\&=
        \splus \cdot \sigma_{2} \cdot S(\ell(x)(c'_{2}x^{2} + c'_{1}x + c'_{0}))
        \\&=
        \splus \cdot \sigma_{2} \cdot S(c'_{2}x^{2} + c'_{1}x + c'_{0})
      \end{split}
    \end{equation*}
    where \(c'_{2}=(a-1)c\), \(c'_{1} = -(a-b)c - (a+b-2)(1-d)\), and \(c'_{0} =
    (a-1)(1-d)\). Since \(c'_{2} > 0\) we have \(S(c'_{2}x^{2} + c'_{1}x +
    c'_{0}) \leq \splus\sminus\splus\). On the other hand,
    \(c'_{2}+c'_{1}+c'_{0} = -(b-1)(1-(c+d)) < 0\), hence
    \(S(c'_{2}x^{2}+c'_{1}x+c'_{0}) \leq \splus\sminus\). Combining these
    inequalities yields
    \begin{equation*}
      S(F(x) - G(\ell(x))) \leq\splus \cdot \sigma_{2} \cdot \splus\sminus \leq \splus\sminus\splus\sminus.
    \end{equation*}
    Finally, as \(F(1) - G(\ell(1)) > 0\), it follows that \(S(F(x) -
    G(\ell(x))) \leq \splus\sminus\splus\).
  \end{description}
  So, taking into account Proposition~\ref{prop:ifrasign}, the proof is concluded.
\end{proof}

The second lemma is similar but covers the case where \(a \geq 1\).
\begin{lemma}\label{lemma:bfixoneasmall}
  Let \(X \sim \pbeta(1, b)\) and \(Y \sim \pbeta(a, b)\) with \(a \leq 1\) and
  \(b > 0\). Then \(X \cleq Y\).
\end{lemma}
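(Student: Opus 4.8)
The plan is to follow the proof of Lemma~\ref{lemma:bfixoneabig}, with the roles of the two unequal parameters interchanged. If \(a = 1\) the two distributions coincide, so \(G^{-1}(F(x)) = x\) is convex and there is nothing to prove; hence assume \(a < 1\). Write \(F\) and \(G\) for the distribution functions of \(\pbeta(1, b)\) and \(\pbeta(a, b)\). By Proposition~\ref{prop:ifrasign} it is enough to establish \eqref{eq:mainconvex} for every affine \(\ell\), and by Corollary~\ref{corr:triviallines} (applicable, since \(1 \geq a\) and \(b \leq b\)) only increasing \(\ell(x) = cx + d\) with \(\ell(0) = d < 0\) and \(\ell(1) = c + d \in (0, 1)\), in particular \(c > 0\), remain to be treated. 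For such \(\ell\) I would invoke Lemma~\ref{lemma:mainlemma} with its parameters \((a, b, a', b')\) set to \((1, b, a, b)\); then \(\sigma_{1} = \sign(-d) = \splus\) and, since \(d < 0\) and \(a \neq 1\), the value \(\sigma_{2} = \sign(a - 1) = \sminus\) is completely determined. This pinning down of \(\sigma_{2}\) is the structural feature that makes the estimate close at once, without the trailing-sign trimming that was needed in Lemma~\ref{lemma:bfixoneabig}. I would then split into the cases \(b = 1\), \(b \in (0, 1)\), and \(b > 1\).

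For \(b = 1\) the claim is immediate from \(F(x) = x\) and \(G(x) = x^{a}\), so \(G^{-1}(F(x)) = x^{1/a}\) is convex because \(1/a \geq 1\). For \(b \in (0, 1)\) I would use \eqref{eq:mainlemma4}: the substitution kills the \(\frac{a-1}{x}\) term of \(p_{3}\), and combining the two remaining denominators via \(\bigl(1 - \ell(x)\bigr) - c(1 - x) = 1 - (c + d)\) yields
\begin{equation*}
  p_{3}(x) = \frac{(1 - b)\bigl(1 - (c + d)\bigr)}{(1 - x)\bigl(1 - \ell(x)\bigr)} + \frac{c(1 - a)}{\ell(x)},
\end{equation*}
which is strictly positive on \(I\) because every factor there is positive. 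Hence \(S(p_{3}) = \splus\) on \(I\), and \eqref{eq:mainlemma1} together with \eqref{eq:mainlemma4} gives \(S\bigl(F(x) - G(\ell(x))\bigr) \leq \splus \cdot \sminus \cdot \splus = \sminus \leq \splus\sminus\splus\).

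For \(b > 1\) I would use \eqref{eq:mainlemma5}. Since \(a = 1\) forces \(c_{0} = -(a - 1)(d - 1)d = 0\), the cubic factors as \(p_{4}(x) = x\,q(x)\) with
\begin{equation*}
  q(x) = -(b - 1)\ell(x)\bigl(1 - \ell(x)\bigr) - c(a - 1)(1 - x)\bigl(1 - \ell(x)\bigr) + c(b - 1)(1 - x)\ell(x),
\end{equation*}
an upward-opening parabola because its leading coefficient is \(c_{3} = (1 - a)c^{2} > 0\). One checks that \(q(0) = c_{1} = (1 - a)c(1 - d) + (b - 1)d(c + d - 1)\) is a sum of two positive terms (the second one positive because \(b > 1\), \(d < 0\), and \(c + d - 1 < 0\)), whereas \(q(1) = -(b - 1)(c + d)\bigl(1 - (c + d)\bigr) < 0\) since \(b > 1\) and \(c + d \in (0, 1)\). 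An upward parabola that is positive at \(0\) and negative at \(1\) has exactly one root in \((0, 1)\), so \(S(q) = \splus\sminus\) on \([0, 1]\), and therefore \(S(p_{4}) = \splus\sminus\) on \(I\), where \(x > 0\). Then \eqref{eq:mainlemma5} gives \(S\bigl(F(x) - G(\ell(x))\bigr) \leq \splus \cdot \sminus \cdot \splus\sminus = \sminus\splus \leq \splus\sminus\splus\), and Proposition~\ref{prop:ifrasign} finishes the proof.

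The only real difficulty is the bookkeeping: correctly specialising the coefficients \(c_{0}, \ldots, c_{3}\) and the sign factors \(\sigma_{1}, \sigma_{2}\) of Lemma~\ref{lemma:mainlemma} under the substitution \((a, b, a', b') \mapsto (1, b, a, b)\), and then verifying the two endpoint signs of the quadratic \(q\) in the case \(b > 1\). Everything else is either immediate or a mirror image of the computation already carried out for Lemma~\ref{lemma:bfixoneabig}.
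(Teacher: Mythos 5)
Your overall strategy is sound and, incidentally, different from the paper's: the paper handles this lemma by inverting the affine map, reducing \(S(F(x) - G(\ell(x))) \leq \splus\sminus\splus\) to \(S(G(x) - F(\ell^{-1}(x))) \leq \sminus\splus\sminus\) and re-running the computation of Lemma~\ref{lemma:bfixoneabig} with the roles of \(F\) and \(G\) exchanged, whereas you apply Lemma~\ref{lemma:mainlemma} directly with parameters \((1, b, a, b)\). Your specialisations are essentially all correct: \(\sigma_{1} = \splus\), \(\sigma_{2} = \sign(a-1) = \sminus\) (via the branch \(d < 0\), \(a' \neq 1\)), the simplified form of \(p_{3}\) and its positivity on \(I\) when \(b < 1\), the factorisation \(p_{4}(x) = x\,q(x)\) when \(b > 1\), and the sign evaluations \(q(0) = c_{1} > 0\) and \(q(1) = -(b-1)(c+d)\bigl(1-(c+d)\bigr) < 0\).

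However, your monoid arithmetic is wrong at the concluding step of both nontrivial cases, and in the case \(b > 1\) this leaves a genuine gap. In \(\mathcal{S}\) a product only collapses adjacent \emph{equal} generators, so \(\splus \cdot \sminus \cdot \splus = \splus\sminus\splus\) (not \(\sminus\)) and \(\splus \cdot \sminus \cdot \splus\sminus = \splus\sminus\splus\sminus\) (not \(\sminus\splus\)). For \(b \in (0,1)\) the error is harmless, since the correct product \(\splus\sminus\splus\) is already the target pattern. But for \(b > 1\) the bound you actually obtain from \eqref{eq:mainlemma5} is \(S(F(x) - G(\ell(x))) \leq \splus\sminus\splus\sminus\), which is \emph{not} \(\leq \splus\sminus\splus\); your structural claim that pinning down \(\sigma_{2} = \sminus\) lets you dispense with the trailing-sign trimming is therefore false. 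The gap is closed by exactly that trimming: since \(\ell(1) = c + d < 1\) one has \(F(1) - G(\ell(1)) = 1 - G(c+d) > 0\), so the sign pattern must end in \(\splus\) and hence \(S(F(x) - G(\ell(x))) \leq \splus\sminus\splus\), precisely as in Case~3 of the proof of Lemma~\ref{lemma:bfixoneabig}. With that correction your argument is complete.
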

\begin{proof}
  Let \(F\) and \(G\) represent the distribution functions of \(\pbeta(1, b)\)
  and \(\pbeta(a, b)\), respectively. Note that the meaning of the symbols \(F\)
  and \(G\) are interchanged relative to their use in the proof of
  Lemma~\ref{lemma:bfixoneabig}. Taking into account
  Proposition~\ref{prop:ifrasign} and Corollary~\ref{corr:triviallines}, we need
  to show that \eqref{eq:mainconvex} holds for \(\ell(x) = cx + d\) such that
  \(\ell(0) = d < 0\) and \(\ell(1) = c + d \in (0, 1)\). This is equivalent to
  \(S(G(x) - F(\ell^{-1}(x))) \leq \sminus\splus\sminus\), where \(\ell^{-1}(x)
  = (x-d)/c\) satisfies \(\ell^{-1}(0) \in (0,1)\) and \(\ell^{-1}(1) > 1\).

  Reversing the roles of \(F\) and \(G\) the proof is now analogous to that of
  Lemma~\ref{lemma:bfixoneabig} except that \(a < 1\) and we wish to establish
  \(S(x \in I \mapsto G(x) - F(\ell^{\ast}(x))) \leq \sminus\splus\sminus\) for
  \(\ell^{\ast}(x) = c^{\ast}x+d^{\ast}\) with \(\ell^{\ast}(0) = d^{\ast} \in
  (0,1)\) and \(\ell^{\ast}(1) = c^{\ast} + d^{\ast} > 1\) on the interval \(I =
  (0, (1-d^{\ast})/c^{\ast})\).
\end{proof}

Comparing the distributions for more general pairs of parameters \(a\) and
\(a'\) requires separate analyses depending on whether \(b > 1\) or \(b \in (0,
1)\).
\begin{lemma}\label{lemma:bfixbiganotacross1}
  Let \(X\sim\pbeta(a, b)\) and \(Y\sim\pbeta(a', b)\) with \(a > a'\) and \(b >
  1\). Then \(X \cleq Y\).
\end{lemma}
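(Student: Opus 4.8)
The plan is to follow the same template as in Lemmas~\ref{lemma:bfixoneabig} and~\ref{lemma:bfixoneasmall}: by Proposition~\ref{prop:ifrasign} and Corollary~\ref{corr:triviallines} it suffices to establish \eqref{eq:mainconvex}, namely \(S(x \in [0,1] \mapsto F(x) - G(\ell(x))) \leq \splus\sminus\splus\), only for increasing affine \(\ell(x) = cx+d\) with \(\ell(0) = d < 0\) and \(\ell(1) = c+d \in (0,1)\), where \(F\) and \(G\) are the distribution functions of \(\pbeta(a,b)\) and \(\pbeta(a',b)\). First I would invoke Lemma~\ref{lemma:mainlemma}: here \(b' = b\), so \(c_3 = (a-a')c^2 > 0\), the multiplier \(\sigma_1 = \sign(-d) = \splus\) since \(d<0\), and \(\sigma_2 = \sign(a'-1)\) (using the \(d<0\) branch), which is \(\splus\) when \(a' > 1\), \(\sminus\) when \(a' < 1\), and one of \(\snull,\sminus,\splus\) when \(a' = 1\). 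Then \eqref{eq:mainlemma5} reduces the problem to controlling the sign pattern of the cubic \(p_4(x) = c_3x^3 + c_2x^2 + c_1x + c_0\) on the interval \(I = (-d/c, 1)\), up to the sign multiplier \(\sigma_1\sigma_2 = \splus \cdot \sigma_2\).

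The heart of the argument is then a careful analysis of \(p_4\). Since \(b' = b\) and \(a > a'\), one should be able to factor \(p_4(x)\) as \(\ell(x)\bigl(c_2''x^2 + c_1''x + c_0''\bigr)\) in analogy with Case~3 of Lemma~\ref{lemma:bfixoneabig}'s proof — checking that \(x = -d/c\) is a root of \(p_4\), which is plausible because \(c_0 = -(a-1)(d-1)d\) and the structure of the \(c_i\) when \(b'=b\). Factoring out \(\ell(x)\), which is positive on \(I\), reduces the sign analysis to that of a quadratic \(q(x) = c_2''x^2 + c_1''x + c_0''\). I would compute \(c_2'' = (a-a')c\) (so \(c_2'' > 0\)), and the endpoint values \(q\) evaluated at \(x=1\) and at \(x = -d/c\): one expects \(q(1)\) to have the sign \(-(b-1)(1-(c+d))\), which is \textit{negative} since \(b>1\) and \(c+d \in (0,1)\). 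From \(c_2'' > 0\) one gets \(S(q) \leq \splus\sminus\splus\), and from \(q(1) < 0\) one further gets \(S(x \in I \mapsto q(x)) \leq \splus\sminus\). Combining with the possible values of \(\sigma_2\), this gives \(S(F(x) - G(\ell(x))) \leq \splus \cdot \sigma_2 \cdot \splus\sminus \leq \splus\sminus\splus\sminus\) in all cases; finally the boundary value \(F(1) - G(\ell(1)) = 1 - G(\ell(1)) > 0\) trims the trailing \(\sminus\), yielding \(S(F(x) - G(\ell(x))) \leq \splus\sminus\splus\) as required.

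The main obstacle I anticipate is the bookkeeping in the factorisation step: verifying that \(\ell(x)\) genuinely divides \(p_4(x)\) when \(b' = b\) and extracting clean formulas for \(c_2'', c_1'', c_0''\) from the somewhat unwieldy expressions for \(c_3, c_2, c_1, c_0\) in Lemma~\ref{lemma:mainlemma}. A secondary subtlety is handling the sub-cases \(a' > 1\), \(a' = 1\), and \(a' < 1\) uniformly; the sign multiplier \(\sigma_2\) changes, but because the bound on \(S(q)\) is \(\splus\sminus\) and the prefix \(\splus\) is always present, multiplying by any of \(\snull, \sminus, \splus\) still lands inside \(\splus\sminus\splus\sminus\), so no separate case split should actually be needed beyond noting this. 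If the factorisation turns out not to be exact, the fallback is to argue directly about the cubic \(p_4\) using that \(c_3 > 0\), evaluating \(p_4\) at the two endpoints of \(I\) (and possibly at \(0\) and \(1\)) to pin down its sign pattern there, which should still be enough given the \(\splus\) multiplier and the boundary trimming.
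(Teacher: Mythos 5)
Your reduction (Proposition~\ref{prop:ifrasign} plus Corollary~\ref{corr:triviallines}, then Lemma~\ref{lemma:mainlemma} with \(\sigma_1 = \splus\), \(c_3 = (a-a')c^2 > 0\)) is exactly right, but the factorisation at the heart of your primary route fails. Writing \(p_4(x) = x(1-x)\ell(x)(1-\ell(x))\,p_3(x)\), the only term of \(p_3\) whose contribution is not divisible by \(\ell\) is \(-c(a'-1)/\ell(x)\), so at \(x_\ast = -d/c\) one gets
\begin{equation*}
  p_4(x_\ast) = -c(a'-1)\,x_\ast(1-x_\ast),
\end{equation*}
which is nonzero whenever \(a' \neq 1\) (and \(x_\ast \in (0,1)\) here, since \(d<0\) and \(c+d>0\)). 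So \(\ell\) divides \(p_4\) only in the special case \(a'=1\) — that is precisely why the factorisation was available in Case~3 of Lemma~\ref{lemma:bfixoneabig} but is not available in the present lemma, where \(a'\) is arbitrary. The quadratic \(q\) you want to analyse simply does not exist for general \(a'\).

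Your stated fallback, however, is exactly the paper's proof and goes through with no further ideas needed: since \(c_3>0\), the cubic satisfies \(S(p_4) \leq \sminus\splus\sminus\splus\); evaluating at the right endpoint gives \(p_4(1) = c_3+c_2+c_1+c_0 = -(b-1)(c+d)(1-(c+d)) < 0\) (here \(b>1\) and \(c+d\in(0,1)\)), which trims the pattern to \(\sminus\splus\sminus\); then \(\splus\cdot\sigma_2\cdot\sminus\splus\sminus \leq \splus\sminus\splus\sminus\) for every admissible \(\sigma_2\), and \(F(1)-G(\ell(1)) = 1-G(c+d) > 0\) removes the trailing \(\sminus\), yielding \eqref{eq:mainconvex}. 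So the proposal is salvageable as written, but you should promote the fallback to the main argument and drop the factorisation step entirely.
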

\begin{proof}
  Let \(F\) and \(G\) be the distribution functions of \(\pbeta(a, b)\) and
  \(\pbeta(a', b)\). By Proposition~\ref{prop:ifrasign} and
  Corollary~\ref{corr:triviallines}, it is enough to prove that
  \eqref{eq:mainconvex} holds when \(\ell(x) = cx + d\) is such that \(\ell(0) =
  d < 0\), \(\ell(1) = c + d \in (0, 1)\).

  Applying Lemma~\ref{lemma:mainlemma} we have \(c_{3} = (a - a')c^{2}\),
  \(\sigma_{1} = \splus\), and \(\sigma_{2} \in \{\snull, \sminus, \splus\}\),
  meaning \eqref{eq:mainlemma5} gives
  \begin{equation*}
    S(F(x)-G(\ell(x)))
    \leq
    \splus \cdot \sigma_{2} \cdot S(c_{3}x^{3} + c_{2}x^{2} + c_{1}x^{1} + c_{0}).
  \end{equation*}
  Since \(c_{3} > 0\) we have \(S(c_{3}x^{3} + c_{2}x^{2} + c_{1}x^{1} + c_{0})
  \leq \sminus\splus\sminus\splus\). But \(c_{3} + c_{2} + c_{1} + c_{0} =
  -(b-1)(c+d)(1-(c+d)) < 0\) so \(S(c_{3}x^{3} + c_{2}x^{2} + c_{1}x^{1} + c_{0})
  \leq \sminus\splus\sminus\).

  Combining the sign pattern inequalities, we have derived that
  \begin{equation*}
    S(x \in I \mapsto F(x) - G(\ell(x)))
    \leq
    \splus \cdot \sigma_{2} \cdot \sminus\splus\sminus
    =
    \splus\sminus\splus\sminus,
  \end{equation*}
  regardless of the value of \(\sigma_{2}\). Finally \(F(1) - G(\ell(1)) > 0\)
  so we conclude that \(S(x \in I \mapsto F(x) - G(\ell(x))) \leq
  \splus\sminus\splus\), hence proving the result.
\end{proof}

\begin{lemma}\label{lemma:bfixsmallbothabigorsmall}
  Let \(X \sim \pbeta(a, b)\) and \(Y \sim \pbeta(a', b)\) with \(0 < b \leq 1\)
  and either \(1 > a > a' > 0\) or \(a > a' > 1\). Then \(X \cleq Y\).
\end{lemma}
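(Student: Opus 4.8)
The plan is to follow the template of Lemmas~\ref{lemma:bfixoneabig}--\ref{lemma:bfixbiganotacross1}. By Proposition~\ref{prop:ifrasign} together with Corollary~\ref{corr:triviallines} (whose hypotheses $a \geq a'$, $b \leq b'$ hold here, with $b = b'$), it suffices to establish \eqref{eq:mainconvex} for an increasing affine map $\ell(x) = cx + d$ with $\ell(0) = d < 0$ and $\ell(1) = c + d \in (0, 1)$, which forces $c > 0$ and $c > -d$. For such $\ell$, Lemma~\ref{lemma:mainlemma} applies with $b = b'$: it gives $I = (-d/c, 1)$, $\sigma_1 = \splus$, $\sigma_2 = \sign(a' - 1)$ (well defined since $a' \neq 1$ in both parameter regimes), and $c_3 = (a - a')c^2 > 0$.

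The crux of the proof is the bound $S(x \in I \mapsto p_4(x)) \leq \sminus\splus$. To obtain it I would regroup $p_3$, putting the $1/x$ and $1/\ell(x)$ terms over the common denominator $x\ell(x)$ and the $1/(1-x)$ and $1/(1-\ell(x))$ terms over $(1-x)(1-\ell(x))$; since $\ell(x) = cx + d$ this gives $p_3(x) = \bigl(c(a-a')x + (a-1)d\bigr)/\bigl(x\ell(x)\bigr) + (b-1)(c+d-1)/\bigl((1-x)(1-\ell(x))\bigr)$, and multiplying through by $x(1-x)\ell(x)(1-\ell(x))$ as in the proof of Lemma~\ref{lemma:mainlemma} yields
\begin{equation*}
  p_4(x) = N(x)(1-x)(1-\ell(x)) + K\, x\ell(x),
\end{equation*}
where $N(x) = c(a-a')x + (a-1)d$ and $K = (b-1)(c+d-1)$ (one can confirm this agrees with $c_3x^3 + c_2x^2 + c_1x + c_0$ by comparing coefficients, e.g.\ $c_0 = (a-1)d(1-d)$). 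On $I$ the quadratics $(1-x)(1-\ell(x))$ and $x\ell(x)$ are strictly positive, so $S(x \in I \mapsto p_4(x)) = S\bigl(x \in I \mapsto N(x) + K R(x)\bigr)$ with $R(x) = \tfrac{x}{1-x}\cdot\tfrac{\ell(x)}{1-\ell(x)}$. Now $N$ is strictly increasing because $a > a'$ and $c > 0$; $K \geq 0$ because $b \leq 1$ and $c + d < 1$; and $R$ is positive and increasing on $I$, being a product of the two positive increasing functions $\tfrac{x}{1-x}$ and $\tfrac{\ell(x)}{1-\ell(x)}$. Hence $N + KR$ is increasing on $I$, and therefore $S(x \in I \mapsto p_4(x)) \leq \sminus\splus$.

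Feeding this into \eqref{eq:mainlemma5} gives $S(x \in [0,1] \mapsto F(x) - G(\ell(x))) \leq \sigma_1 \cdot \sigma_2 \cdot \sminus\splus$, and this equals $\splus\sminus\splus$ both when $\sigma_2 = \splus$ (the case $a > a' > 1$) and when $\sigma_2 = \sminus$ (the case $1 > a > a' > 0$). So \eqref{eq:mainconvex} holds for every admissible $\ell$, and by Proposition~\ref{prop:ifrasign} we conclude $X \cleq Y$. The only genuine computation is the factored form of $p_4$; everything else is sign-pattern bookkeeping. The conceptual reason this case is easier than Lemma~\ref{lemma:bfixbiganotacross1} is that $b \leq 1$ makes the constant $K$ non-negative, so $p_4$ is, up to a positive factor on $I$, an increasing function rather than a cubic that could a priori change sign three times; in particular there is no need to trim sign patterns at the endpoints, nor to treat $b = 1$ separately (there $K = 0$, and the same argument goes through).
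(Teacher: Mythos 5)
Your proof is correct, and it takes a genuinely different and substantially shorter route than the paper's. The paper first reduces to the case \(a - a' < 1\) by transitivity, then works at the level of the density quotient \(q(x) = x^{a-1}\ell(x)^{1-a'} - C\bigl((1-x)/(1-\ell(x))\bigr)^{1-b}\); for \(a > a' > 1\) it must split \(I\) at the critical point \(x_{0}\) of \(x^{a-1}/\ell(x)^{a'-1}\), prove convexity of \(q\) on the left piece by a second-derivative computation (whose leading coefficient \((a-a')(a-a'-1)c^{2}\) is what forces the preliminary reduction to \(a - a' < 1\)) and monotonicity on the right piece, and then glue the two sign patterns. You instead descend one further level in Lemma~\ref{lemma:mainlemma}, to the cubic \(p_{4}\), and exploit the factorisation \(p_{4}(x) = N(x)(1-x)(1-\ell(x)) + K\,x\ell(x)\) with \(N\) increasing (as \(a > a'\), \(c>0\)) and \(K = (b-1)(c+d-1) \geq 0\) precisely because \(b \leq 1\) and \(c+d<1\); dividing by the positive factor \((1-x)(1-\ell(x))\) exhibits \(p_{4}\) on \(I\) as a positive multiple of the increasing function \(N + KR\), whence \(S(x \in I \mapsto p_{4}(x)) \leq \sminus\splus\) and \(\sigma_{1}\cdot\sigma_{2}\cdot\sminus\splus = \splus\sminus\splus\) in either regime. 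I checked your factorisation against the coefficients \(c_{3},\dotsc,c_{0}\) of Lemma~\ref{lemma:mainlemma} (with \(b'=b\)) and it is exact. This buys you three things: no transitivity reduction, no interval splitting, and a uniform treatment of both parameter regimes --- indeed your argument never uses that \(a\) and \(a'\) lie on the same side of \(1\), so it actually establishes \(\pbeta(a,b) \cleq \pbeta(a',b)\) for all \(a > a' > 0\) when \(b \leq 1\), which would let one bypass Lemmas~\ref{lemma:bfixoneabig} and~\ref{lemma:bfixoneasmall} in that range as well. The only point worth stating explicitly is the one you already note in passing: \(\sigma_{2}\) enters only as a single sign (or \(\snull\)), so the product \(\splus\cdot\sigma_{2}\cdot\sminus\splus\) collapses to \(\splus\sminus\splus\) by idempotency whatever its value.
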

\begin{proof}
  We may assume, without loss of generality, that \(a-a' < 1\). Indeed, if
  \(a-a' \geq 1\), one may choose for sufficiently large \(N\) a sequence
  \(a_{0} = a, a_{1}, \dotsc, a_{N} = a'\) such that \(a_{i_{1}}-a_{i} < 1\) for
  all \(i = 1, \dotsc, N\) and apply transitivity to conclude \(\pbeta(a_{0}, b)
  \cleq \dotsb \cleq \pbeta(a_{N}, b)\). Let \(F\) and \(G\) be the distribution
  functions of \(\pbeta(a, b)\) and \(\pbeta(a', b)\), respectively. Based on
  Proposition~\ref{prop:ifrasign} and Corollary~\ref{corr:triviallines}, it is
  enough to prove that \eqref{eq:mainconvex} holds for every \(\ell(x) = cx+d\)
  such that \(\ell(0) = d < 0\) and \(\ell(1) = c+d \in (0, 1)\). Using
  Lemma~\ref{lemma:mainlemma} we have for \(I = (-d/c, 1)\) that
  \begin{equation}\label{eq:maingeneralsameb}
    \begin{split}
      &S(x \in [0,1] \mapsto F(x) - G(\ell(x)))
      \\&\quad\leq
      \splus \cdot S
      \left(
        x \in I \mapsto
        \frac{x^{a-1}(1-x)^{b-1}}{\betafun(a,b)}
        -
        \frac{\ell(x)^{a'-1}(1-\ell(x))^{b-1}}{\betafun(a',b)}
      \right)
      \\&\quad=
      \splus \cdot S
      \left(
        x \in I \mapsto
        \frac{x^{a-1}}{\ell(x)^{a'-1}}
        -
        \frac{c\betafun(a,b)}{\betafun(a',b')}
        \Bigl(\frac{1-\ell(x)}{1 - x}\Bigr)^{b-1}
      \right).
    \end{split}
  \end{equation}
  For convenience define \(C = c\betafun(a,b)/\betafun(a',b') > 0\), \(q_{1}(x)
  = x^{a-1}/\ell(x)^{a'-1}\), \(q_{2}(x) = (1-x)/(1-\ell(x))\), and \(q(x) =
  q_{1}(x) - Cq_{2}(x)^{1-b}\). Restricting to \(I\) we have that \(q_{2}\) is
  decreasing and concave. Hence, as \(b \leq 1\), it follows that \(x \in I
  \mapsto -Cq_{2}(x)^{1-b}\) is non-decreasing and convex.

  A simple computation yields \(q'_{1}(x) = ((a-a')cx +
  (a-1)d)/(x^{2-a}\ell(x)^{a'})\) which has unique root at \(x_{0} =
  -(a-1)d/((a-a')c)\). Letting \(c_{2}^{\ast} = (a-a')(a-a'-1)c^{2}\),
  \(c_{1}^{\ast} = 2(a-a'-1)(a-1)cd\), \(c_{0}^{\ast} = (a-1)(a-2)d^{2}\), and
  \(p(x) = c_{2}^{\ast}x^{2} + c_{1}^{\ast}x + c_{0}^{\ast}\) it follows that \(q_{1}''(x) =
  p(x)/(x^{3-a}\ell(x)^{a'+1})\).
  \begin{description}
  \item[Case 1. \(1 > a > a'\):]
    In this case \((a-a')cx + (a-1)d > 0\) for \(x > 0\), which implies that
    \(q_{1}\) is increasing on \(I\). Therefore \(x \in I \mapsto
    q_{1}(x)-Cq_{2}(x)^{1-b}\) is increasing, meaning \(S(x \in I \mapsto
    q_{1}(x)-Cq_{2}(x)^{1-b}) \leq \sminus\splus\). Plugged into
    \eqref{eq:maingeneralsameb} this gives \(S(F(x) - G(\ell(x))) \leq
    \splus\sminus\splus\).

  \item[Case 2. \(a > a' > 1\):]
    A direct verification shows that \(x_{0} \in I\) so that \(I_{1} = (-d/c,
    x_0]\) and \(I_{2} = (x_0,1)\) are well defined and non-empty. Since \(I =
    I_{1} \cup I_{2}\) and \(I_{1} < I_{2}\)
    \begin{equation*}
      S(x \in I \mapsto q(x))
      =
      S(x \in I_{1} \mapsto q(x)) \cdot S(x \in I_{2} \mapsto q(x)).
    \end{equation*}

    \begin{description}
    \item[Sign pattern in \(I_{1}\):]
      As \(c_{2}^{\ast} = (a-a')(a-a'-1)c^{2} < 0\) it follows that \(S(x \in I
      \mapsto q_{1}''(x)) = S(x \in I \mapsto p(x)) \leq \sminus\splus\sminus\).
      But \(p(-d/c) = (a'-1)a'd^{2} > 0\) and \(p(x_0) = (a-1)(a'-1)d^{2}/(a-a')
      > 0\) so \(S(x \in I \mapsto p(x)) = \splus\).

      This implies that \(q_{1}\) is convex in \(I_{1}\). As we have proved the
      convexity of \(-Cq_{2}(x)^{1-b}\) in \(I\), it follows that \(q(x) =
      q_{1}(x)-Cq_{2}(x)^{1-b}\) is convex in \(I_{1}\). According to
      Proposition~\ref{prop:convexconvexsigns} it follows that
      \begin{equation*}
        S(x \in I_{1} \mapsto q(x)) \leq \splus\sminus\splus.
      \end{equation*}
    \item[Sign pattern in \(I_{2}\):] Noting that \(S(x \in I \mapsto q_{1}'(x))
      = S((a-a')cx + (a-1)d) \leq \sminus\splus\) and \(q_{1}'(x_0) = 0\), it
      follows that \(q_{1}'\) is positive in \(I_{2}\). Thus \(q_{1}\) is
      increasing in \(I_{2}\). We have proved above that \(x \in I \mapsto
      -Cq_{2}(x)^{1-b}\) is increasing, so \(q(x)\) is increasing in the
      interval \(I_{2}\). Therefore
      \begin{equation*}
        S(x \in I_{2} \mapsto q(x)) \leq \sminus\splus.
      \end{equation*}
    \end{description}

    If \(q(x_{0}) < 0\) then \(S(x \in I_{1} \mapsto q(x)) \leq \splus\sminus\).
    If \(q(x_{0}) \geq 0\) then \(S(x \in I_{2} \mapsto q(x)) = \splus\) since
    \(q\) is increasing on \(I_{2}\). In either case \(S(x \in I_{1} \mapsto
    q(x)) \cdot S(x \in I_{2} \mapsto q(x)) \leq \splus\sminus\splus\).

    Putting the above into \eqref{eq:maingeneralsameb} we have
    \begin{equation*}
      S(x \in I \mapsto F(x) - G(\ell(x)))
      \leq
      \splus \cdot \splus\sminus\splus = \splus\sminus\splus
    \end{equation*}
    as required.%\qedhere{}
  \end{description}
\end{proof}
We now state, without proof, a straightforward result, helpful for the
conclusion of the final characterisation within the \(\pbeta\) family.
\begin{proposition}\label{prop:mirrorifr}
  Let \(X \sim P\) and \(Y \sim Q\) be random variables with some distributions
  \(P\) and \(Q\), then \(X \cleq Y\) if and only if \(1-Y \cleq 1-X\).
\end{proposition}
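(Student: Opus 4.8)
The plan is to rephrase the assertion in terms of composed distribution functions and then to recognise that passing from $X,Y$ to $1-X,1-Y$ merely conjugates the relevant composition by the orientation-reversing affine map $R(t)=1-t$. Let $F$ and $G$ be the (strictly increasing, hence continuous) distribution functions of $P$ and $Q$, defined on intervals $I$ and $J$. Then $1-X$ and $1-Y$ have distribution functions $F_{-}(x)=1-F(1-x)$ and $G_{-}(x)=1-G(1-x)$ on $1-I$ and $1-J$; these are again strictly increasing, so $1-Y\cleq 1-X$ is a well-posed statement exactly when $X\cleq Y$ is. A one-line computation from $F_{-}(x)=1-F(1-x)$ gives $F_{-}^{-1}(u)=1-F^{-1}(1-u)$, so that
\[
  F_{-}^{-1}\bigl(G_{-}(x)\bigr)=1-F^{-1}\bigl(G(1-x)\bigr).
\]

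Writing $\phi=G^{-1}\circ F$, so that $\phi^{-1}=F^{-1}\circ G$, the displayed map is $x\mapsto 1-\phi^{-1}(1-x)=(R\circ\phi^{-1}\circ R)(x)$. By Definition~\ref{def:corder}, $X\cleq Y$ is exactly convexity of $\phi$, whereas $1-Y\cleq 1-X$ is exactly convexity of $R\circ\phi^{-1}\circ R$. The argument then rests on two elementary facts about real functions on intervals: (i) an increasing bijection is convex if and only if its inverse is concave; and (ii) for any function $h$ the conjugate $R\circ h\circ R$ is convex if and only if $h$ is concave, since $R$ is affine and decreasing. Chaining them, $\phi$ is convex $\iff$ $\phi^{-1}$ is concave $\iff$ $R\circ\phi^{-1}\circ R$ is convex, which is precisely $X\cleq Y\iff 1-Y\cleq 1-X$; since (i) and (ii) are genuine equivalences, this delivers both directions of the proposition simultaneously.

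I do not anticipate a genuine obstacle: the result is essentially bookkeeping. The two points deserving a little care are to check that $F_{-}^{-1}\circ G_{-}$ is well-defined exactly when $G^{-1}\circ F$ is (both reduce to the range condition $F(I)\subseteq G(J)$, which is preserved under the reflection), and to keep the orientation straight — the conjugating map $R$ is \emph{decreasing}, which is precisely why a concavity, and not a second convexity, appears in the middle of the chain above. An equivalent route sidesteps inverse functions entirely: as recalled in Section~\ref{sec:prel}, for an increasing $\psi$ one has $X\cleq\psi(X)$ if and only if $\psi$ is convex; since $\phi(X)$ has distribution $Q$ we get $Y\stackrel{d}{=}\phi(X)$, hence $X\stackrel{d}{=}\phi^{-1}(Y)$ with $\phi^{-1}$ increasing and concave, hence $1-X\stackrel{d}{=}(R\circ\phi^{-1}\circ R)(1-Y)$ with $R\circ\phi^{-1}\circ R$ increasing and convex, which gives $1-Y\cleq 1-X$.
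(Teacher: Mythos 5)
Your argument is correct. Note that the paper itself states Proposition~\ref{prop:mirrorifr} explicitly \emph{without} proof (calling it ``straightforward''), so there is no authorial argument to compare against; what you have written is a complete and valid justification of the omitted step. The computation $F_{-}^{-1}\circ G_{-} = R\circ\phi^{-1}\circ R$ with $R(t)=1-t$ and $\phi=G^{-1}\circ F$ is right, and the two elementary equivalences you invoke (an increasing bijection is convex iff its inverse is concave; conjugation by a decreasing affine map swaps convexity and concavity) chain together to give both directions at once. Your alternative route via the criterion $X\cleq\psi(X)\iff\psi$ convex is also sound, though as written it only yields one implication directly; the converse then follows by applying the same argument to the pair $1-Y$, $1-X$ and using $1-(1-X)=X$, which is worth saying explicitly if you take that route.
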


We now have all the necessary ingredients to prove the main theorem.

\begin{proof}[Proof of Theorem~\ref{thm:mainifr}]
  The necessity is a direct consequence of Theorem~\ref{thm:betast}. The
  sufficiency follows from Lemmas~\ref{lemma:bfixoneabig},
  \ref{lemma:bfixoneasmall}, \ref{lemma:bfixbiganotacross1},
  \ref{lemma:bfixsmallbothabigorsmall}, and the transitivity of the convex
  transform order. First note that we obtain
  \begin{equation}
    \label{eq:beta1}
    \pbeta(a, b) \cleq \pbeta(a', b),
  \end{equation}
  when \(a = a'\) (trivial), \(b > 1\) (use
  Lemma~\ref{lemma:bfixbiganotacross1}), \(b \leq 1\) and either \(1 > a > a'\)
  or \(a > a' > 1\) (use Lemma~\ref{lemma:bfixsmallbothabigorsmall}). The order
  relation (\ref{eq:beta1}) also holds if \(b \leq 1\) and \(a > 1 > a'\) by
  combining Lemmas~\ref{lemma:bfixoneabig}~and~\ref{lemma:bfixoneasmall}, since
  then \(\pbeta(a, b) \cleq \pbeta(1, b) \cleq \pbeta(a', b)\). Using this and
  Proposition~\ref{prop:mirrorifr} we also have \(\pbeta(a', b) \cleq \pbeta(a',
  b')\), concluding the proof.
\end{proof}

\section*{Acknowledgments}

We would like to thank the anonymous reviewers for their detailed remarks and extensive references.

\printbibliography{}

\appendix

\section{An algebra for sign variation}
\label{sec:overview}

The main tool of all proofs concerning the ordering within the \(\pbeta\) family
is the study of sign patterns of functions. While such techniques have a long
tradition in probability theory, %see especially (??Ref karlin??),
for our purposes it turns out to be computationally convenient to give a
presentation slightly more algebraic as compared to what appears to be the
convention, using a suitable monoid (see, for example, % Jacobson~
\textcite{Jac85}).
\begin{definition}
  Let \((\mathcal{S}, \cdot ) = \langle \splus, \sminus \mid \splus \cdot \splus
  = \splus, \sminus \cdot \sminus = \sminus\rangle\) be the monoid generated by
  two idempotent elements \(\splus\) and \(\sminus\) and with unit \(\snull\).
\end{definition}
We shall call elements of \(\mathcal{S}\) \emph{sign patterns}. When unambiguous
we will denote products \(\sigma \cdot \sigma'\) by simply juxtaposing the
factors as in \(\sigma\sigma'\), so that \(\mathcal{S} = \{\snull, \splus,
\sminus, \splus\sminus, \sminus\splus, \splus\sminus\splus, \dotsc\}\).

For any \(\sigma = \sigma_{1} \dotsb \sigma_{n} \in \mathcal{S}\) where
\(\sigma_{1}, \dotsc, \sigma_{n} \in \{\splus, \sminus\}\) let \(\srev \sigma =
\sigma_{n} \dotsb \sigma_{1}\) be the sign pattern given by reversing the order
of signs and let
\begin{equation*}
  \label{eq:signflip}
  \overline{\sigma} = \overline{\sigma_{0}} \cdot \dotsb \cdot \overline{\sigma_{n}},
  \qquad \text{where} \qquad \overline{\sigma_{i}} =
  \begin{cases}
    \splus & \sigma_{i} = \sminus,
    \\
    \sminus & \sigma_{i} = \splus,
  \end{cases}
\end{equation*}
denote the sign pattern given by flipping the signs. Note in particular that
\(\overline{\snull} = \snull\). These operations are  well defined on the free monoid generated by \(\splus\) and \(\sminus\). Since \(\srev(\snull) = \snull\), \(\overline{\snull} = \snull\), \(\srev(\sigma_{1} \dotsb \sigma_{n} \cdot \sigma_{n+1}) = \sigma_{n+1} \cdot \srev(\sigma_{1} \dotsb \sigma_{n})\),
and \(
\overline{\sigma_{1} \dotsb \sigma_{n} \cdot \sigma_{n+1}}
= 
\overline{\sigma_{1} \dotsb \sigma_{n}} \cdot \overline{\sigma_{n+1}}
\)
it follows by a simple induction argument that they are well defined also as operations on \(\mathcal{S}\).

Sign patterns have a natural order structure.
\begin{definition}
  Given \(\sigma, \sigma' \in \mathcal{S}\) we say that \(\sigma \leq \sigma'\)
  if \(\sigma' = \pi \cdot \sigma \cdot \pi'\) for some \(\pi, \pi' \in
  \mathcal{S}\).
\end{definition}
Intuitively \(\sigma \leq \sigma'\) says that \(\sigma\) may be written as a substring of \(\sigma'\).

\begin{proposition}
  \((\mathcal{S}, \cdot, \leq)\) is a partially ordered monoid in the sense that
  \((\mathcal{S}, \leq)\) is a partially ordered set and if \(\sigma, \sigma'
  \in \mathcal{S}\) are such that \(\sigma \leq \sigma'\) then for any \(\pi,
  \pi' \in \mathcal{S}\) one has \(\pi \cdot \sigma \cdot \pi' \leq \pi \cdot
  \sigma' \cdot \pi'\).
\end{proposition}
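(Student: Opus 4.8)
The plan is to check the three partial-order axioms and then compatibility with the product, in that order; the only nontrivial tool needed is that every element of \(\mathcal{S}\) has a unique \emph{reduced} representative, namely the alternating word obtained by repeatedly rewriting \(\splus\splus \to \splus\) and \(\sminus\sminus \to \sminus\) (termination is clear and there are no nontrivial overlaps, so the system is confluent). Write \(|\sigma|\) for the length of this reduced word. Reflexivity is immediate, since \(\sigma = \snull \cdot \sigma \cdot \snull\), and transitivity follows from associativity: if \(\sigma' = \pi \sigma \pi'\) and \(\sigma'' = \rho \sigma' \rho'\) then \(\sigma'' = (\rho\pi)\sigma(\pi'\rho')\).

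For antisymmetry the key observation is that \(|\pi\sigma| \geq |\sigma|\) for every \(\pi\): the reduced word of \(\pi\sigma\) is the concatenation of those of \(\pi\) and \(\sigma\) with at most one letter collapsed at the junction, so \(|\pi\sigma| \geq |\pi| + |\sigma| - 1\), which is \(\geq |\sigma|\) unless \(\pi = \snull\), in which case \(\pi\sigma = \sigma\). Moreover \(|\pi\sigma| = |\sigma|\) can only happen if \(\pi = \snull\) or \(\pi\) is a single generator equal to the first letter of \(\sigma\), and in both cases \(\pi\sigma = \sigma\); symmetrically for right multiplication. Hence, if \(\sigma \leq \sigma'\) and \(\sigma' \leq \sigma\), then \(|\sigma| = |\sigma'|\), and writing \(\sigma' = \pi\sigma\pi'\) forces each of the two multiplications to preserve length, so \(\sigma' = \pi\sigma\pi' = \sigma\).

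For compatibility it suffices to prove that \(\sigma \leq \sigma'\) implies \(\pi\sigma \leq \pi\sigma'\) — then \(\pi\sigma\pi' \leq \pi\sigma'\pi'\) follows by applying this on the left and then, via the reversal anti-automorphism \(\srev\), which preserves \(\leq\), on the right — and, writing \(\pi\) as a product of generators and using the sign-flip automorphism \(\overline{\cdot}\), it suffices to treat left multiplication by the single generator \(\splus\). Here I would first record the convenient reformulation that \(\sigma \leq \sigma'\) holds if and only if the reduced word of \(\sigma\) occurs as a contiguous block of the reduced word of \(\sigma'\): one direction is the definition, reading \(\pi\) and \(\pi'\) off as the pieces flanking that block, and the other direction is exactly the junction-collapse analysis already used, which shows the reduced word of \(\sigma\) survives intact as a contiguous block of the reduced word of \(\pi\sigma\pi'\). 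Granting this, prepending \(\splus\) either does nothing to a reduced word that already starts with \(\splus\), or places a single \(\splus\) in front; in either case one locates the block corresponding to \(\sigma\) inside the reduced word of \(\sigma'\) and checks that the block corresponding to \(\splus\sigma\) appears correspondingly inside the reduced word of \(\splus\sigma'\), the point being that over the two-letter alphabet the letter sitting immediately to the left of the \(\sigma\)-block is completely determined, so the two prependings stay in step.

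The partial-order axioms are routine; the genuine obstacle is this last step of the compatibility argument, namely controlling, simultaneously on \(\sigma\) and on \(\sigma'\), how prepending a generator interacts with a possible collapse at the junction with the distinguished subword. The argument leans essentially on the rigidity of alternating words over a two-letter alphabet — with three or more generators the analogous claim would need a different idea — so I would be careful to isolate exactly where that rigidity enters.
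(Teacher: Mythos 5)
Your proof is correct. Note that the paper states this proposition without any proof at all, treating it as routine, so there is no argument of the authors' to compare against; what you have written is a complete justification of the omitted details. Your route — unique alternating normal forms for elements of \(\mathcal{S}\), the length estimate \(|\pi\sigma| \geq |\pi| + |\sigma| - 1\) with equality analysis for antisymmetry, and the characterisation of \(\sigma \leq \sigma'\) as ``the reduced word of \(\sigma\) is a contiguous factor of that of \(\sigma'\)'' (which is exactly the paper's informal gloss ``\(\sigma\) may be written as a substring of \(\sigma'\)'') — is the natural one, and you are right to flag the compatibility step as the only place requiring care. Your key observation there is sound: since reduced words are alternating over a two-letter alphabet, the letter immediately preceding the \(\sigma\)-block inside \(\sigma'\) is forced to equal \(\overline{s_1}\) where \(s_1\) is the first letter of \(\sigma\), so prepending a generator to \(\sigma\) and to \(\sigma'\) produces factors that remain aligned; together with transitivity in the case where the generator absorbs into \(\sigma\), and the reductions via \(\srev\) and the sign-flip automorphism, this covers all cases (including \(\sigma = \snull\)). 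One minor presentational point: the rewriting rules do overlap with themselves (at \(\splus\splus\splus\)), but the resulting critical pair is trivially joinable, so your confluence claim stands as stated.
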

We can now describe the sign variations of a function in terms of the simple
sign function.
\begin{definition}[Sign function]
  The sign function \(\sign \colon \R \to \mathcal{S}\) is defined by
  \(\sign(x) = \splus\) if \(x > 0\), \(\sign(x) = \snull\) if \(x = 0\), and
  \(\sign(x) = \sminus\) if \(x < 0\).
\end{definition}

\begin{definition}[Sign patterns and finite sign variation]
  Given \(I \subseteq \R\), we say that a function \(f \colon I \mapsto \R\) is
  of \emph{finite sign variation} if the set
  \begin{equation*}
    \left\{\sign(f(x_{1})) \cdot \sign(f(x_{2})) \cdot \dotsb \cdot \sign(f(x_{n})) \mid n \in \N, x_{1} \leq \dotsb \leq x_{n} \in I \right\}
  \end{equation*}
  has a (unique) maximal element in \(\mathcal{S}\). This maximal element is
  then denoted by \(S(x \in I \mapsto f(x))\) and called the \emph{sign pattern}
  of \(f\).
\end{definition}
When unambiguous, we will abbreviate \(S(x \in I \mapsto f(x)) = S(x \mapsto
f(x)) = S(f(x)) = S(f)\) and write for readability \(\overline{S}(f) =
\overline{S(f)}\).

The proposition below gives some standard rules of calculation for sign patterns
which are straightforward to prove and used without explicit mention throughout
the proofs.
\begin{proposition}
  Let \(I \subset \R\) and \(f,g \colon I \to \R\) be such that \(f\) and
  \(f-g\) are of finite sign variation.
  \begin{enumerate}
  \item For any \(J \subset I\) one has \(S(x \in J \mapsto f(x)) \leq S(x \in I
    \mapsto f(x))\).

  \item For any \(J \leq K\) such that \(I = J \cup K\) one has \(S(x \in I
    \mapsto f(x)) = S(x \in J \mapsto f(x)) \cdot S(x \in K \mapsto f(x))\).

  \item For any positive \(h \colon I \to \R\) one has \(S(f(x)) =
    S(f(x)h(x))\).

  \item For \(J \subset \R\) and \(\eta \colon J \to I\) increasing (or
    decreasing) one has \(S(x \in I \mapsto f(x)) = S(x \in J \mapsto
    f(\eta(x)))\) (respectively \(= \srev S(x \in J \mapsto f(\eta(x)))\)).

  \item For \(J \subset f(I) \cup g(I)\) and \(\eta \colon J \to \R\) increasing
    (or decreasing) one has \(S(f(x) - g(x)) = S(\eta(f(x)) - \eta(g(x)))\)
    (respectively \(= \overline{S}(\eta(f(x)) - \eta(g(x)))\)).
  \end{enumerate}
\end{proposition}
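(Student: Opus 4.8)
The plan is to prove all five identities by one two-step recipe: first record the corresponding \emph{pointwise} statement about \(\sign\), then promote it to sign patterns using that, by definition, \(S(x \in A \mapsto f(x))\) is the greatest element of the set \(\Pi_{A}(f)\) of all products \(\sign(f(x_{1})) \cdots \sign(f(x_{n}))\) over finite chains \(x_{1} \leq \dotsb \leq x_{n}\) in \(A\). The one structural fact I would isolate at the outset, and then use tacitly throughout, is that each such \(\Pi_{A}(f)\) is \emph{upward-directed} in the substring order \(\leq\) of \(\mathcal{S}\): refining a chain, by inserting extra points, can only move its product up, and any two chains have a common refinement. Hence, if \(\Pi_{A}(f)\) admits any upper bound \(\sigma^{\ast}\), then it lies inside the \emph{finite} set \(\{\sigma \in \mathcal{S} : \sigma \leq \sigma^{\ast}\}\), and a directed subset of a finite poset has a greatest element; this is exactly what makes finite sign variation the natural hypothesis, and it legitimises every ``apply \(S(\cdot)\) to both sides'' step below. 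I would also record once, for later use, that \(\srev\) and \(\overline{\,\cdot\,}\) are order isomorphisms of \((\mathcal{S}, \leq)\), each being an involution that carries substrings to substrings and distributes over products.

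With this in hand the five items go as follows. For (1), every chain in \(J \subseteq I\) is also a chain in \(I\), so \(\Pi_{J}(f) \subseteq \Pi_{I}(f)\); being directed and bounded above by \(S(f|_{I})\), it has a greatest element, which is then \(S(f|_{J})\), and \(S(f|_{J}) \leq S(f|_{I})\). For (2), the hypothesis \(J \leq K\) with \(I = J \cup K\) forces every chain \(x_{1} \leq \dotsb \leq x_{n}\) in \(I\) to split at some index into a prefix lying in \(J\) followed by a suffix lying in \(K\) (a point of \(J \cap K\), if any, may be assigned to either side), so every element of \(\Pi_{I}(f)\) lies below \(S(f|_{J}) \cdot S(f|_{K})\), both of which exist by (1); conversely, concatenating a chain in \(J\) realising \(S(f|_{J})\) with one in \(K\) realising \(S(f|_{K})\) produces a chain in \(I\) whose product is exactly \(S(f|_{J}) \cdot S(f|_{K})\). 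Comparing the two inequalities gives the claimed equality.

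Items (3)--(5) are pointwise sign identities lifted to products in the same way. For (3), \(h(x) > 0\) gives \(\sign(f(x)h(x)) = \sign(f(x))\) for every \(x\), so \(\Pi_{I}(fh) = \Pi_{I}(f)\) and hence \(S(fh) = S(f)\). For (4), reading \(\eta \colon J \to I\) as an increasing (resp.\ decreasing) bijection, it carries chains in \(J\) bijectively to chains in \(I\), preserving (resp.\ reversing) their order, so that \(\Pi_{J}(f \circ \eta) = \Pi_{I}(f)\) (resp.\ \(\Pi_{J}(f \circ \eta) = \{ \srev p : p \in \Pi_{I}(f) \}\)); taking greatest elements, and using in the decreasing case that \(\srev\) is an order isomorphism, gives \(S(x \in I \mapsto f(x)) = S(x \in J \mapsto f(\eta(x)))\) and, respectively, \(S(x \in I \mapsto f(x)) = \srev S(x \in J \mapsto f(\eta(x)))\). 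For (5), if \(\eta\) is strictly increasing on a set containing \(f(I) \cup g(I)\) then \(\sign(\eta(f(x)) - \eta(g(x))) = \sign(f(x) - g(x))\) for every \(x \in I\), whereas if \(\eta\) is strictly decreasing this sign equals \(\overline{\sign(f(x) - g(x))}\); since \(\overline{\,\cdot\,}\) is an order isomorphism distributing over products, passing to greatest elements over \(\Pi_{I}(f - g)\) yields \(S(f - g) = S(\eta(f) - \eta(g))\) and, respectively, \(S(f - g) = \overline{S}(\eta(f) - \eta(g))\); the same pointwise comparison shows \(f - g\) and \(\eta(f) - \eta(g)\) have finite sign variation simultaneously.

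The main point demanding care, rather than difficulty, is twofold. First, one should pin down once and for all the existence-of-maximum argument in \(\mathcal{S}\) — directedness together with finiteness of principal down-sets — so that every passage from a pointwise \(\sign\)-statement to a statement about \(S(\cdot)\) is justified. Second, one should be explicit that ``increasing/decreasing'' is to be read as \emph{strictly} monotone in (4) and (5), and that in (4) the map \(\eta\) is onto \(I\): without strictness a sign can collapse to \(\snull\) and the pointwise correspondences driving the whole argument break down, while without surjectivity (4) yields only \(\leq\), with equality relative to \(\eta(J)\).
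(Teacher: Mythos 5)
Your proof is correct. Note that the paper offers no proof of this proposition at all --- it is stated as a collection of ``standard rules... straightforward to prove and used without explicit mention'' --- so there is no argument to compare against; your directed-set treatment (refinements only increase the product, any two chains have a common refinement, a bounded directed subset of \((\mathcal{S},\leq)\) has a greatest element because principal down-sets are finite) is exactly the right way to make the omitted verification rigorous, and it correctly reduces each item to its pointwise \(\sign\)-identity. You also rightly flag the implicit hypotheses the paper glosses over: \(\eta\) must be onto \(I\) in (4) for equality rather than \(\leq\), ``increasing/decreasing'' must be read strictly in (5), and the condition \(J \subset f(I)\cup g(I)\) in (5) is evidently a typo for \(J \supset f(I)\cup g(I)\); all three readings are consistent with how the proposition is actually invoked in the paper's proofs.
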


Sign patterns provide a useful tool for establishing convexity or
star-shapedness of functions (see for example Lemma~11 and Theorem~20 in
% Arab~and~Oliveira~
\textcite{AO19}).
\begin{proposition}\label{prop:convexconvexsigns}
  A continuous function \(f\) is convex (respectively, star-shaped) if and only
  if \(S(f(x) - \ell(x)) \leq \splus\sminus\splus\) (respectively, \(S(f(x)-
  \ell(x)) \leq \sminus\splus\)), for all affine functions \(\ell\)
  (respectively, for all affine functions \(\ell\) vanishing at 0).
\end{proposition}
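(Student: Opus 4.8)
The plan is to prove each of the two equivalences by establishing both implications; I would carry out the convex case in full and then obtain the star-shaped case by an entirely parallel argument. For the convex case, suppose first that $f$ is convex and $\ell$ is affine, so $g = f - \ell$ is convex. A convex function can never be strictly negative at some point, strictly positive at a later point, and strictly negative at a still later point: if $x_{1} < x_{2} < x_{3}$ with $g(x_{1}), g(x_{3}) < 0$ and $x_{2} = \lambda x_{1} + (1-\lambda) x_{3}$, then $g(x_{2}) \leq \lambda g(x_{1}) + (1-\lambda) g(x_{3}) < 0$. Hence no increasing sequence of points realises the word $\sminus\splus\sminus$, which is precisely what the bound $S(g) \leq \splus\sminus\splus$ asserts (and which in particular forces $g$ to have finite sign variation); so $S(f - \ell) \leq \splus\sminus\splus$. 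For the converse I argue by contraposition: if $f$ is not convex, choose $x < z < y$ with $f(z)$ strictly above the chord $\ell$ through $(x, f(x))$ and $(y, f(y))$; then for small $\varepsilon > 0$ the function $f - (\ell + \varepsilon)$ is strictly negative at $x$ and at $y$ and still strictly positive at $z$, so it realises $\sminus\splus\sminus$ and cannot be bounded by $\splus\sminus\splus$.

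For the star-shaped case I would use that $f \colon [0, a] \to \R$ is star-shaped exactly when $x \mapsto f(x)/x$ is non-decreasing on $(0, a]$ and $f(0) \leq 0$. If $f$ is star-shaped and $\ell(x) = cx$, then for $x \in (0, a]$ we have $\sign\bigl(f(x) - cx\bigr) = \sign\bigl(f(x)/x - c\bigr)$, a non-decreasing function of $x$, whence $S\bigl(x \in (0, a] \mapsto f(x) - cx\bigr) \leq \sminus\splus$; prepending $\sign\bigl(f(0) - \ell(0)\bigr) = \sign(f(0)) \in \{\snull, \sminus\}$ leaves the bound $\sminus\splus$ intact. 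Conversely, if $f$ is not star-shaped then either $f(x)/x > f(y)/y$ for some $0 < x < y$, in which case picking $c$ strictly between these two numbers makes $f - cx$ positive at $x$ and negative at $y$; or else $f(x)/x$ is non-decreasing on $(0, a]$ but $f(0) > 0$, in which case $f(x)/x \leq f(a)/a$ for every $x \in (0, a]$, so choosing $c > f(a)/a$ makes $f - cx$ positive at $0$ and negative throughout $(0, a]$. In either situation $f - cx$ realises $\splus\sminus$ and so is not bounded by $\sminus\splus$.

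The only step that is bookkeeping rather than plain analysis is the combinatorics inside $\mathcal{S}$: since the elements of $\mathcal{S}$ are just the alternating sign words together with $\snull$, one has to verify that ``does not realise $\sminus\splus\sminus$'' coincides with ``$\leq \splus\sminus\splus$'' and ``does not realise $\splus\sminus$'' coincides with ``$\leq \sminus\splus$'' — that is, that $\sminus\splus\sminus$ and $\splus\sminus$ are the unique minimal obstructions to the two bounds — and, along the way, that the functions appearing above really do have finite sign variation so that $S(\cdot)$ is defined. Once this is granted, the substantive content is only the chord inequality for convex functions and the monotonicity description of star-shapedness, so I do not expect any genuine obstacle here.
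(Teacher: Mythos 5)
Your proof is correct. The paper itself gives no argument for this proposition --- it is stated in the appendix with a pointer to Lemma~11 and Theorem~20 of the cited work of Arab and Oliveira --- so there is no internal proof to compare against; what you supply is a complete, elementary, self-contained replacement. Both directions check out: the chord inequality shows a convex \(f-\ell\) can never be negative--positive--negative, and your contrapositive (take the chord through \((x,f(x))\) and \((y,f(y))\) witnessing the failure of convexity and shift it up by a small \(\varepsilon\)) correctly produces a line realising \(\sminus\splus\sminus\); note this direction does not even use continuity. The star-shaped case via monotonicity of \(f(x)/x\) on \((0,a]\) together with \(f(0)\leq 0\) is likewise sound, including the slightly fiddly subcase where \(f(x)/x\) is monotone but \(f(0)>0\). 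The one step you flag but do not carry out --- that ``does not realise \(\sminus\splus\sminus\)'' is equivalent to ``\(S\leq\splus\sminus\splus\)'', including the existence of the unique maximal realised word --- does hold and deserves a line: if no triple of points realises \(\sminus\splus\sminus\), every realised word lies in \(\{\snull,\splus,\sminus,\splus\sminus,\sminus\splus,\splus\sminus\splus\}\), and if both \(\splus\sminus\) and \(\sminus\splus\) are realised then merging the two witnessing sequences forces one of the length-three alternating words to be realised, which here can only be \(\splus\sminus\splus\); hence a unique maximum exists and is \(\leq\splus\sminus\splus\). The analogous check for \(\sminus\splus\) versus the obstruction \(\splus\sminus\) is even simpler. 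With that paragraph added, the proof is complete.
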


Applied to the convex (\textsc{ifr}) and star-shape transform (\textsc{ifra})
orders, these characterisations translate into the following equivalent
conditions for being ordered.
\begin{proposition}\label{prop:ifrasign}
  Let \(X\) and \(Y\) be random variables with distributions given by
  distribution functions \(F\) and \(G\), respectively. Then \(X \cleq Y\)
  (respectively \(X \IFRAleq Y\)) if and only if \(S(F(x) - G(\ell(x))) \leq
  \splus\sminus\splus\) (resp., \(S(F(x) - G(\ell(x))) \leq \sminus\splus\)) for
  every affine function \(\ell\) (resp., for every affine function \(\ell\)
  vanishing at 0).
\end{proposition}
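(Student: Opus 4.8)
The plan is to read off this proposition from Proposition~\ref{prop:convexconvexsigns} using the definitions of the two orders and the elementary calculus of sign patterns. By Definition~\ref{def:corder}, $X \cleq Y$ means exactly that $f := G^{-1} \circ F$ is convex, and since $F$ and $G$ are continuous and strictly increasing on their supporting intervals, $f$ is a continuous function on the interval $I$ supporting $P$. Proposition~\ref{prop:convexconvexsigns} then states that $f$ is convex if and only if $S(x \in I \mapsto f(x) - \ell(x)) \leq \splus\sminus\splus$ for every affine $\ell$. Hence everything reduces to the claim that, for each affine $\ell$, the sign patterns $S(x \in I \mapsto F(x) - G(\ell(x)))$ and $S(x \in I \mapsto f(x) - \ell(x))$ coincide. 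The star-shape statement is obtained in exactly the same way from the star-shape half of Proposition~\ref{prop:convexconvexsigns} and Definition~\ref{def:starorder}, replacing $\splus\sminus\splus$ by $\sminus\splus$ and restricting to affine $\ell$ vanishing at $0$.

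To prove that identity I would first establish it pointwise on the interior of $I$. Fix $x$ in the interior of $I$; then $0 < F(x) < 1$ and $f(x) = G^{-1}(F(x))$ lies strictly inside the interval $J$ supporting $Q$. Distinguishing the cases $\ell(x) \in J$, $\ell(x) \leq \inf J$, and $\ell(x) \geq \sup J$ — using in the first case that $G$ is a strictly increasing bijection onto its range, and in the other two that $G$ extends by the limiting constants $0$ and $1$ while $f(x)$ is strictly between the endpoints of $J$ — one checks in each case that $\sign(F(x) - G(\ell(x))) = \sign(f(x) - \ell(x))$. Since $x \mapsto F(x) - G(\ell(x))$ and $x \mapsto f(x) - \ell(x)$ are continuous on $I$, the sign pattern of each over the interior of $I$ equals its sign pattern over all of $I$ (isolated zeros contribute only $\snull$, and a nonzero sign at an endpoint is already visible nearby in the interior by continuity); combining this with the pointwise identity gives $S(x \in I \mapsto F(x) - G(\ell(x))) = S(x \in I \mapsto f(x) - \ell(x))$, which is exactly what is needed.

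With the identity in place, the equivalence is immediate: $f$ is convex iff $S(f(x) - \ell(x)) \leq \splus\sminus\splus$ for all affine $\ell$, iff $S(F(x) - G(\ell(x))) \leq \splus\sminus\splus$ for all affine $\ell$; and likewise with ``star-shaped'', ``$\sminus\splus$'', and ``$\ell$ vanishing at $0$'' throughout. The one point requiring care — and what I would regard as the main, if modest, obstacle — is that an affine $\ell$ need not map $I$ into the support interval $J$ of $Q$, so that $G(\ell(x))$ is only meaningful after extending $G$ by its limiting constants, and one must verify that this extension, together with the behaviour of $F$ and $G$ at the endpoints of $I$, neither creates nor destroys a relevant sign change. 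This is handled precisely by the interior pointwise casework together with the continuity/density observation above; apart from it, the proposition is a direct transcription of Proposition~\ref{prop:convexconvexsigns}.
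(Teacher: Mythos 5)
Your proposal is correct and matches the paper's (implicit) reasoning: the paper states this proposition without proof as a direct translation of Proposition~\ref{prop:convexconvexsigns} via the monotone-transformation rule for sign patterns, which is exactly the reduction you carry out. The casework you add for $\ell(x)$ leaving the support of $Q$ (extending $G$ by its limiting constants $0$ and $1$) is a sound and welcome filling-in of the detail the paper leaves tacit.
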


The following slight generalisation of a well-known relationship between the
sign pattern of a differentiable function and the sign pattern of its derivative
is also used throughout our proofs.
\begin{proposition}\label{prop:signdiff}
  Let \(f \colon I \mapsto \R\) be continuously differentiable with finite sign
  pattern \(S(x \in I \to f(x)) = \sigma \dotsb \), then \(S(x \in I \to f(x))
  \leq \sigma \cdot S(x \in I \to f'(x))\).
\end{proposition}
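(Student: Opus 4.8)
The plan is to reduce the statement to the mean value theorem and let the monoid $\mathcal{S}$ handle the bookkeeping. Write $S(x \in I \to f(x))$ in its canonical reduced form $\sigma_{1}\sigma_{2}\dotsb\sigma_{k}$, an alternating word in $\{\splus,\sminus\}$ (so consecutive letters differ), where $k = 0$ corresponds to $S(f) = \snull$. The cases $k \le 1$ are immediate: if $k = 0$ then $f \equiv 0$ on $I$ and hence $f' \equiv 0$, so both sides reduce to $\snull$; if $k = 1$ then, using $\snull \le S(f')$ and that $\mathcal{S}$ is a partially ordered monoid, $S(f) = \sigma_{1} = \sigma_{1}\cdot\snull \le \sigma_{1}\cdot S(f') = \sigma\cdot S(f')$. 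So I would assume $k \ge 2$, in which case $\sigma = \sigma_{1}$, and reduce everything to proving
\begin{equation*}
  S(x \in I \to f'(x)) \ge \sigma_{2}\cdot\sigma_{3}\dotsb\sigma_{k};
\end{equation*}
once this is available, left multiplication by $\sigma_{1}$, which is order preserving, gives $\sigma_{1}\cdot S(f') \ge \sigma_{1}\cdot\sigma_{2}\dotsb\sigma_{k} = S(f)$, the word on the right being already reduced.

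For the displayed inequality, the first step is to produce witness points. Since $S(f)$, being a maximal element of the set of finite products $\sign(f(x_{1}))\dotsb\sign(f(x_{n}))$ with $x_{1}\le\dotsb\le x_{n}$ in $I$, is in particular a member of that set, there is such a product equal to $\sigma_{1}\dotsb\sigma_{k}$; discarding the factors equal to $\snull$ and picking one index out of each maximal run of the resulting word yields points $z_{1} < z_{2} < \dotsb < z_{k}$ in $I$ with $\sign(f(z_{i})) = \sigma_{i}$ for every $i$ (distinctness is forced because consecutive $\sigma_{i}$ differ). The main step is then, for each $i \in \{1,\dotsc,k-1\}$, to apply the mean value theorem on $[z_{i},z_{i+1}]$ to obtain $w_{i}\in(z_{i},z_{i+1})$ with $f'(w_{i}) = \bigl(f(z_{i+1})-f(z_{i})\bigr)/(z_{i+1}-z_{i})$: if $\sigma_{i}\sigma_{i+1} = \splus\sminus$ then $f(z_{i+1}) < 0 < f(z_{i})$, so $f'(w_{i}) < 0$, and if $\sigma_{i}\sigma_{i+1} = \sminus\splus$ then $f(z_{i+1}) > 0 > f(z_{i})$, so $f'(w_{i}) > 0$; in either case $\sign(f'(w_{i})) = \sigma_{i+1}$. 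Since $w_{i}\in(z_{i},z_{i+1})$ for each $i$, the points $w_{i}$ are strictly increasing, whence
\begin{equation*}
  S(x \in I \to f'(x)) \ge \sign(f'(w_{1}))\dotsb\sign(f'(w_{k-1})) = \sigma_{2}\cdot\sigma_{3}\dotsb\sigma_{k},
\end{equation*}
which is exactly what was needed.

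I do not expect a genuine obstacle here; the argument is little more than a careful application of the mean value theorem, and the only points requiring attention are bookkeeping. These are: that the maximal element $S(f)$ is genuinely \emph{attained} by a finite configuration of points, so that the $z_{i}$ exist (immediate from $S(\cdot)$ being defined as a maximal element of, hence a member of, the relevant set); the small combinatorial step extracting the $z_{i}$ from a run-length decomposition of an attaining product; the degenerate cases $k \le 1$; and keeping the monoid arithmetic straight, in particular that $\sigma_{1}\cdot\sigma_{2}\dotsb\sigma_{k}$ is already reduced and so equals $S(f)$, together with order-preservation of left multiplication. It is worth noting that continuous differentiability is more than is used: only differentiability on each open interval $(z_{i},z_{i+1})$, which lies in the interior of $I$, together with continuity on its closure, enters through the mean value theorem.
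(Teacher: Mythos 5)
Your proof is correct and follows essentially the same route as the paper's: extract witness points realising the sign pattern of \(f\), apply the mean value theorem on each consecutive pair to produce points where \(f'\) realises all but the first sign, and conclude by left-multiplying by the leading sign. The paper's version is terser (it omits the degenerate cases \(k\le 1\) and the run-extraction bookkeeping you spell out), but the argument is the same.
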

\begin{proof}
  Let \(S(x \in I \to f(x)) = \sigma_{0} \sigma_{1} \dotsb \sigma_{n}\).
  Therefore there exists a sequence \(x_{0} < x_{1} < \dotsb < x_{n}\) with
  \(\sign(f(x_{i})) = \sigma_{i}\). By the mean value theorem there exist
  \(y_{1}, \dotsc, y_{n}\) such that \(f'(y_{i}) =
  (f(x_{i})-f(x_{i-1}))/(x_{i}-x_{i-1})\). Since, in particular,
  \(\sign(f'(y_{i})) = \sigma_{i}\), we have that \(\sigma_{1} \dotsb \sigma_{n}
  \leq S(x \in I \to f'(x))\).
\end{proof}

If in the statement of Proposition~\ref{prop:signdiff} the initial sign of \(S(x
\in I \to f'(x))\) is the same as \(\sigma\) the inequality becomes \(S(x \in I
\to f(x)) \leq S(x \in I \to f'(x))\). This becomes particularly useful in
combination with the following, elementary, proposition.

\begin{proposition}\label{prop:signdiffadd}
  For \(b > a\) let \(f \colon [a, b] \mapsto \R\) be a continuously
  differentiable function with finite sign patterns \(S(x \in I \to f(x)) =
  \sigma \dotsb \sigma'\) and \(S(x \in I \to f'(x))) = \tau \dotsb \tau'\). If
  \(f(a) = 0\) then \(\sigma = \tau\) and if \(f(b) = 0\) then \(\sigma' =
  \overline{\tau}'\).

  The interval \([a, b]\) may be replaced by \((a, b]\), \([a, b)\) or \((a,
  b)\) if the conditions \(f(a) = 0\) and \(f(b) = 0\) are replaced by \(\lim_{x
    \to a+}f(x) = 0\) or \(\lim_{x \to b-}f(x) = 0\), as appropriate.
\end{proposition}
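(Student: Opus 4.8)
The plan is to prove the statement first for a closed interval $[a,b]$ with $f(a) = 0$, to deduce from it the statement with $f(b) = 0$ via the order-reversing substitution $x \mapsto a+b-x$, and then to note that the half-open and open interval cases need no new ideas, only one-sided limits in place of boundary values. The argument is a matter of bookkeeping rather than of any genuine difficulty: the only points that require care are keeping the sign-witnessing points in the interior of the interval, so that continuity of $f'$ can be invoked, and transferring the first sign of $f'$ from a subinterval to the whole interval.

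The single ingredient beyond elementary calculus is a description of what the \emph{first} sign of a function of finite sign variation records. If $g$ has finite sign variation and its first sign is $\splus$, then there is a point $x_0$ in its domain with $g(x_0) > 0$ and $g \geq 0$ on everything to the left of $x_0$: taking a sequence realising $S(g)$, its first entry $x_0$ has $g(x_0) > 0$, and a point to the left of $x_0$ with $g < 0$ would produce a sign pattern strictly larger than $S(g)$, contradicting maximality; continuity of $g$ then extends $g \geq 0$ to $x_0$ itself. The symmetric statement holds when the first sign is $\sminus$, whereas the first sign is $\snull$ precisely when $g \equiv 0$.

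Assume now $f(a) = 0$ on $[a,b]$. If the first sign $\sigma$ of $f$ is $\snull$ then $f \equiv 0$, so $f' \equiv 0$ and $\tau = \snull = \sigma$; replacing $f$ by $-f$ reduces the remaining possibilities to $\sigma = \splus$. By the observation there is $c \in (a,b]$ with $f(c) > 0$ and $f \geq 0$ on $[a,c]$. Since $f(a) = 0 \neq f(c)$, $f'$ is not identically $0$ on $[a,c]$, so $S(x \in [a,c] \mapsto f'(x)) \neq \snull$; as $S(x \in [a,b] \mapsto f'(x)) = S(x \in [a,c] \mapsto f'(x)) \cdot S(x \in [c,b] \mapsto f'(x))$, it suffices to prove that the first sign of $f'$ on $[a,c]$ is $\splus$. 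If instead it were $\sminus$, then applying the observation to $f'$ on $[a,c]$ (and using continuity of $f'$ to move the witnessing point off $a$ when $f'(a) < 0$) there would be $z \in (a,c]$ with $f'(z) < 0$ and $f' \leq 0$ on $[a,z]$; then $f$ is non-increasing on $[a,z]$, so $f(z) \leq f(a) = 0$, and with $f \geq 0$ this forces $f \equiv 0$ on $[a,z]$, hence $f' \equiv 0$ on $[a,z]$ and $f'(z) = 0$, contradicting $f'(z) < 0$.

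For the statement with $f(b) = 0$, put $\tilde f(x) = f(a+b-x)$ on $[a,b]$, so that $\tilde f$ is continuously differentiable with $\tilde f(a) = f(b) = 0$ and $\tilde f'(x) = -f'(a+b-x)$. Since $x \mapsto a+b-x$ is decreasing and $S(-h) = \overline{S(h)}$, we get $S(\tilde f) = \srev S(f)$ and $S(\tilde f') = \overline{\srev S(f')}$, so the first sign of $\tilde f$ is the last sign $\sigma'$ of $f$ while the first sign of $\tilde f'$ is $\overline{\tau'}$; the case just proved, applied to $\tilde f$, yields $\sigma' = \overline{\tau'}$. On $(a,b]$, $[a,b)$ or $(a,b)$ the same reasoning applies once the step $f(z) \leq f(a) = 0$ is replaced by $f(z) \leq \lim_{x \to a^+} f(x) = 0$, which holds because $f$ is non-increasing on $(a,z]$, and symmetrically at the right endpoint.
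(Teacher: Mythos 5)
Your proof is correct. There is nothing in the paper to compare it against: Proposition~\ref{prop:signdiffadd} is introduced as ``elementary'' and stated without proof, so your write-up supplies the missing verification rather than an alternative to an existing one. The structure holds up under scrutiny: the characterisation of the first sign of a finite-sign-variation function via maximality of its sign pattern (prepending a point of the opposite sign would produce a strictly larger element of \(\mathcal{S}\)), the reduction to \(\sigma = \splus\) by passing to \(-f\), the splitting \(S(f'|_{[a,b]}) = S(f'|_{[a,c]}) \cdot S(f'|_{[c,b]})\) with \(S(f'|_{[a,c]}) \neq \snull\) so that the first sign transfers, the contradiction obtained from \(f\) being simultaneously non-increasing, non-negative, and vanishing at \(a\) (forcing \(f' \equiv 0\) up to \(z\) and hence \(f'(z) = 0\)), and the reflection \(x \mapsto a+b-x\), which converts the right-endpoint claim \(\sigma' = \overline{\tau'}\) into the left-endpoint one via \(S(\tilde f) = \srev S(f)\) and \(S(\tilde f') = \overline{\srev S(f')}\), are all sound, as is the replacement of boundary values by one-sided limits in the open and half-open cases. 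One cosmetic point: the ``first entry'' of a sequence realising \(S(g)\) should be read as the first entry whose sign is not \(\snull\), since zero-sign entries contribute the identity of the monoid and can be discarded; with that reading your key observation is exactly right.
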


\end{document}